\definecolor{violet}{rgb}{0.0,0.2,0.7}
\definecolor{rouge2}{rgb}{0.8,0.0,0.2}
 \theoremstyle{plain}    
 \newtheorem{thm}{Theorem}[section]
\theoremstyle{plain} 
\theoremstyle{thm*} 
\newtheorem{bigthm}{Theorem}
 \numberwithin{equation}{section} 
 \numberwithin{figure}{section} 
 \newtheorem{cor}[thm]{Corollary} 
 \theoremstyle{plain}    
 \newtheorem{prop}[thm]{Proposition} 
 \theoremstyle{plain}    
  \theoremstyle{thm*}    
  \newtheorem*{question*}{Question}
 \newtheorem{lem}[thm]{Lemma} 
 \theoremstyle{remark}
  \newtheorem{claim}[thm]{Claim} 
 \theoremstyle{remark}
 \newtheorem{rem}[thm]{Remark}
 \theoremstyle{definition}
\newtheorem{exa}[thm]{Example}
\theoremstyle{thm*}
\newtheorem*{setup*}{Main Setup}
\theoremstyle{thm}
\newtheorem{setup}[thm]{Setup}
\theoremstyle{plain}
\theoremstyle{definition}
\newcommand{\C}{{\mathbb{C}}}
\newcommand{\Z}{{\mathbb{Z}}}
\newcommand{\be}{\mathbf e}
\newcommand{\parallelsum}{\mathbin{\!/\mkern-5mu/\!}}
\def\1{\bold{1}}
\newcommand{\wom}{\widehat{\Omega}}
\newcommand{\e}{\varepsilon}
\newcommand{\om}{\omega}
\newcommand{\ep}{\varepsilon}
\DeclareMathOperator{\Ric}{Ric}
\newcommand{\Hol}{\mathrm{Hol}}
\newcommand{\SU}{\mathrm{SU}}
\newcommand{\Sp}{\mathrm{Sp}}
\renewcommand{\ge}{\geqslant}
\renewcommand{\le}{\leqslant}
\title[On log Calabi-Yau manifolds]{Log Calabi--Yau manifolds: holomorphic tensors, stability and universal cover}
\author{Tristan C. Collins}
\address{Department of Mathematics, University of Toronto, 40 St. George St., Toronto, Ontario, Canada}
\email{tristanc@math.toronto.edu}
\author{Henri Guenancia}
\address{Univ. Bordeaux, CNRS, Bordeaux INP, IMB, UMR 5251, F-33400 Talence, France}
\email{henri.guenancia@math.cnrs.fr}
\date{}
\begin{document}

\begin{abstract}
We study various geometric properties of log Calabi-Yau manifolds, i.e. log smooth pairs $(X,D)$ such that $K_X+D=0$. More specifically, we focus on the two cases where $X$ is a Fano manifold and $D$ is either smooth or has two proportional components.  Despite the existence of a complete Ricci flat Kähler metric on $X\setminus D$ in both cases, we will show that the geometric properties of the pair $(X,D)$ are vastly different, e.g. validity of Bochner principle, local triviality of the quasi-Albanese map, polystability of $T_X(-\log D)$ and compactifiability of the universal cover of $X\setminus D$. When $D$ has two components we show that the universal cover of $X\setminus D$ is a Calabi-Yau manifold of infinite topological type, and we describe the geometry at infinity from a Riemannian point of view.

\end{abstract}

\maketitle
\tableofcontents

\section{Introduction}

\subsection{Ricci flat Kähler metrics and applications: a brief summary}
Let $X$ be a compact Kähler manifold such that $c_1(X)=0\in H^2(X, \mathbb R)$. A seminal result of Yau \cite{Yau78} shows the existence in any given Kähler class of a unique Kähler metric $\omega$ which is Ricci flat, i.e. $\Ric \om =0$. This result unlocked a host of applications towards our understanding of the geometry of compact Kähler manifold with zero first Chern class, including: 
\begin{enumerate}
\item {\it Bochner principle:} holomorphic tensors are parallel with respect to $\omega$. 
\item {\it Stability:} the tangent bundle $T_X$ is polystable with respect to any Kähler class. 
\item {\it Albanese:} the Albanese map $\mathrm{alb}_X: X\to \mathrm{Alb}(X)$ is a locally trivial fibration, which becomes trivial after a finite base change. 
\item {\it Universal cover:} the universal cover $\widetilde X$ is isomorphic to $\mathbb C^r\times Y$ where $r\in \mathbb N$ and $Y$ is compact Kähler. In particular, $\widetilde X$ is an analytic Zariski open set in a compact Kähler manifold. 
\item {\it Beauville-Bogomolov decomposition:} there is a a finite étale cover $X'\to X$ which splits as a product $X'\simeq T\times \prod_{i\in I} Y_i\times \prod_{j\in J} Z_j$ where $T$ is a complex torus, the $X_i$ are irreducible Calabi-Yau manifolds and the $Y_j$ are irreducible holomorphic symplectic manifolds. 
\end{enumerate}

Note that the Beauville-Bogomolov decomposition theorem \cite{Bea83} encompasses $(1)-(4)$ in the sense that the first four properties can easily be derived from Yau's theorem and the said decomposition. It quickly became important to look for larger classes of Kähler manifolds with zero first Chern class that could be endowed with Kähler Ricci flat metrics and for whom results like $(1)-(5)$ might be desirable to obtain. At least two such classes stand out.

\medskip

One of these classes consists in compact Kähler varieties with log terminal singularities and zero first Chern class; a class of varieties arising naturally in the context of the Minimal Model Program, see e.g. \cite{Pe94}. The existence of singular Kähler Ricci flat metrics was proved by \cite{EGZ} (later complemented by \cite{Paun} in the Kähler case), Bochner principle was established in \cite{GGK} and \cite{CGGN}, polystability of the tangent was proved in \cite{GKP} and \cite{GSS}, étale triviality of the Albanese map (in the canonical singularity case) was proved in \cite{Kawa81} and \cite{CGGN}. As for the universal cover either of $X$ or $X_{\rm reg}$, it is conjectured to have a similar structure but so far only partial results are known, cf e.g. \cite{GGK}. Finally, the decomposition theorem was proved in the algebraic case in several steps \cite{Dru16,GGK,HP} and in the Kähler case by \cite{BGL}, relying on the algebraic case. 

Very recently, it was showed in \cite{BFPT} that none of these results can be extended in the setting of varieties with log canonical singularities, as they produced such a variety where the Albanese map is not locally trivial (in a strong sense) and the tangent sheaf is not polystable.  However, it is expected that in such a setting, no natural Kähler Ricci flat metric can be constructed on the regular locus of the variety. 

\medskip

Another important class consists in the complement of a divisor $D$ in a compact Kähler manifold $X$ such that $K_X+\alpha D\sim_{\mathbb Q} 0$ for some $\alpha \in \mathbb Q_{>0}$. The construction of a {\it complete} Kähler Ricci flat metric on $X\setminus D$ was achieved by Tian and Yau when $X$ is Fano, $D$ is smooth and either $\alpha=1$ \cite{TY1} or $\alpha >1$ \cite{TY2}. When $\alpha \in (0,1)$ and $D$ has simple normal crossings, {\it incomplete} Kähler Ricci flat metrics on $X\setminus D$ with conic singularities along $D$ were constructed by \cite{Brendle}, \cite{CGP, GP} and \cite{JMR} in various degrees of generality, and applications like Bochner principle or stability of the (orbifold) tangent bundle were derived in \cite{CGP, GP} and \cite{GT16}. Finally, in the case where $X$ is Fano, $D=D_1+D_2$ has simple normal crossings with two numerically proportional irreducible components and $\alpha=1$, Collins and Li \cite{CL} constructed a {\it complete} Kähler Ricci flat metric on $X\setminus D$. 

\subsection{Main results}

The goal of this paper is threefold. 
\begin{enumerate}[label=$\bullet$]
\item Provide algebro-geometric applications of the existence of the Tian-Yau metric on the complement of a smooth anticanonical divisor in a Fano manifold, e.g. by showing that the suitable analogue of properties $(1)-(4)$ are satisfied. 
\item Exhibit an infinite family of examples of log Calabi-Yau pairs $(X, D=D_1+D_2)$ where $X$ is Fano and $D$ is an anticanonical divisor with two irreducible components and {\it all four} properties $(1)-(4)$ fail, despite the existence of a complete Kähler Ricci flat metric on $X\setminus D$. 
\item Study the asymptotic geometry of the universal cover of $(X\setminus(D_1\cup D_2),\omega_{\rm CL})$. 
\end{enumerate}

\medskip

In the following, we let $X$ be a Fano manifold of dimension $n\ge 2$ and we let $D\in |-K_X|$ be an anticanonical divisor with simple normal crossings. We set $M:=X\setminus D$. We focus on the following two cases. 

\bigskip

{\bf Case 1.  $D$ smooth.} 

\noindent
In that case, it is not hard to check that the quasi-Albanese variety of $(X,D)$ is a point (e.g. case $k=p=1$ in Proposition~\ref{easy vanishing}) and that the complement $M=X\setminus D$ has finite fundamental group, so that $(3)$ and $(4)$ are trivially true. It remains to investigate $(1)$ and $(2)$. Let $\omega_{\rm TY}$ be the complete Tian-Yau metric on $M$. 

\begin{bigthm}
\label{thm A}
Let $X$ be a Fano manifold of dimension $n\ge 2$, let $D\in |-K_X|$ be a smooth anticanonical divisor and let $\omega_{\rm TY}$ be the complete Tian-Yau metric on $X\setminus D$. Finally, let $p,q$ be nonnegative integers, and let $E:=T_X(-\log D)^{\otimes p}\otimes \Omega_X(\log D)^{\otimes q}$ be a logarithmic tensor bundle.  
\begin{enumerate}
\item {\bf [Holonomy]} The holonomy of $(M, \omega_{\rm TY})$ is $\SU(n)$. 
\item {\bf [Bochner principle].} Any holomorphic tensor $\sigma \in H^0(X, E)$ is parallel with respect to $\omega_{\rm TY}$ on $X\setminus D$.
\item {\bf [Stability].} The logarithmic tangent bundle $T_X(-\log D)$ is stable with respect to $-K_X$. 
\end{enumerate}
\end{bigthm}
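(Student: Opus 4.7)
The three statements are closely linked, and the plan is to establish them in the order (2), then (1), then (3), with the Bochner principle being the analytic heart of the theorem.

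\emph{Bochner principle.} For $\sigma \in H^0(X,E)$, the Hermitian metric $h$ on $E|_M$ induced by $\omty$ and its dual has Chern curvature controlled by $\Ric(\omty)=0$, so the standard Bochner formula for holomorphic tensors on a Ricci-flat Kähler manifold reads
$$
\Delta_{\omty}\, |\sigma|_h^2 \;=\; |\nabla \sigma|_h^2 \;\ge\; 0,
$$
hence $|\sigma|_h^2$ is subharmonic on the complete manifold $(M,\omty)$. The key technical step is to bound $|\sigma|_h^2$ on $M$: this follows from the Tian-Yau asymptotics of \cite{TY1} (Calabi ansatz near $D$), which show that in a local logarithmic frame for $E$ adapted to $D$, the fiberwise norm $|\sigma|_h$ grows at most polylogarithmically in the distance to $D$. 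Combined with Yau's generalized maximum principle for bounded subharmonic functions on complete manifolds with $\Ric \ge 0$ (or with a Karp-type cut-off integration by parts), this forces $|\sigma|_h^2$ to be constant and hence $\nabla \sigma = 0$.

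\emph{Holonomy.} Ricci-flatness places the reduced holonomy in $\SU(n)$; to exclude proper subgroups I would invoke Berger's classification. A strict reduction would produce either a parallel holomorphic $(p,0)$-form with $0<p<n$, or a parallel holomorphic sub-bundle of $TM$. In either case the parallel tensor has constant norm, hence, by the asymptotic analysis of (2), extends to a global holomorphic section of the corresponding logarithmic tensor bundle on $(X,D)$. Ruling out such sections is then a matter of suitable vanishing statements (in the spirit of Proposition~\ref{easy vanishing}), obtained via the log cotangent exact sequence $0\to \Omega_X^p \to \Omega_X^p(\log D)\to \Omega_D^{p-1}\to 0$ together with $H^0(X,\Omega_X^p)=0$ for Fano $X$ and vanishings on $D$ coming from its Calabi-Yau structure.

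\emph{Stability and main obstacle.} If $T_X(-\log D)$ were not $(-K_X)$-stable, a saturated destabilizing subsheaf $F$ would, via a Bando-Siu type argument on $(M,\omty)$ --- using that the induced metric on $T_X(-\log D)|_M$ is Hermite-Einstein with Einstein factor $0$ (since $\det T_X(-\log D) = -(K_X+D) = \cO_X$ and $\Ric(\omty)=0$) --- yield a parallel orthogonal projection onto $F|_M$, contradicting the $\SU(n)$-irreducibility from (1). The main obstacle throughout is the analytic step in (2): rigorously justifying the Bochner vanishing on the non-compact Tian-Yau manifold requires fine asymptotic control of $\omty$ near $D$, and the same asymptotics must be re-used to extend parallel tensors across $D$ in (1) and to make the Bando-Siu argument rigorous in (3).
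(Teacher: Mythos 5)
Your overall architecture (Bochner via subharmonicity of the norm, holonomy via parallel forms and Berger, stability via ``destabilizer $\Rightarrow$ parallel subbundle $\Rightarrow$ contradiction with $\SU(n)$'') matches the paper's, but the analytic core of the Bochner step is not correct as written. For $\sigma\in H^0(X,E)$ the norm $|\sigma|_{h}$ is in general \emph{unbounded}: in the Tian--Yau metric one has $-\log|z_1|\approx r^{\frac{2n}{n+1}}$ in terms of the geodesic distance $r$, and e.g. $|dz_1/z_1|^2_{\omty}\approx(-\log|z_1|)^{1-\frac1n}$, so the a priori bound is only $|\sigma|^2=O(r^N)$, i.e.\ polynomial growth, not a polylogarithmic or bounded one. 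Moreover, even boundedness would not save the argument: bounded subharmonic functions on a complete Ricci--flat manifold need not be constant (consider $\max(-|x|^{2-m},-1)$ on $\mathbb R^m$), so ``Yau's generalized maximum principle for bounded subharmonic functions'' does not yield constancy, and a Karp-type cutoff applied directly to the polynomially growing $|\sigma|^2$ does not close either. The paper's actual mechanism is different: one passes to $f=\log(1+|\sigma|^2)$, checks that $f$ is still subharmonic (this uses the traced curvature identity and $\Ric\omty=0$, not merely $\Delta|\sigma|^2\ge0$), notes $f=O(\log r)$, and then exploits the \emph{subquadratic volume growth} $\mathrm{vol}(B(R))\simeq R^{\frac{2n}{n+1}}$ together with a cutoff satisfying $\Delta\chi_R=O(R^{-2})$, so that $\int f^2\Delta\chi_R=O((\log R)^2R^{-\frac{2}{n+1}})\to0$ and $|\sigma|$ is constant. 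The volume growth is the decisive input that your sketch never uses; the paper emphasizes it precisely because it fails in the two-component case, where the Bochner principle is false.

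Two further gaps. For the holonomy, a strict reduction of the \emph{restricted} holonomy produces parallel $(p,0)$-forms on the universal cover, and $\pi_1(M)\simeq\mathbb Z/d$ is finite but generally nontrivial; such a form extends to a logarithmic form on the compactified cyclic cover $(Y,\widetilde D)$, not on $(X,D)$, so your appeal to vanishing on $(X,D)$ misses a step. This is repairable — by Lemma~\ref{torsion} the cover is again a Fano with a smooth anticanonical divisor, so Proposition~\ref{easy vanishing} applies there; the paper instead avoids any vanishing theorem and kills the forms directly from the asymptotics (Claim~\ref{no p forms}: constancy of the norm is incompatible with the weights $(-\log|w_1|)^{\frac{n-p}{n}}$ and $(-\log|w_1|)^{-\frac qn}$) — but note also that the vanishing $H^0(D,\Omega^{p-1}_D)=0$ for $p\le n-1$ comes from the Lefschetz theorem plus $X$ Fano, not from the Calabi--Yau structure of $D$. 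For stability, ``a Bando--Siu type argument'' is a placeholder rather than a proof: no off-the-shelf noncompact Donaldson--Uhlenbeck--Yau statement is known to cover this setting (the paper points out that Zhang's results do not). The paper must first move the singular set of the saturated destabilizer away from $D$ (Claim~\ref{claim subbundle}, using that $T_X(-\log D)|_D\simeq T_D\oplus\mathcal O_D$ is polystable), and then run a cutoff Chern--Weil slope computation with the exponential asymptotics of $\omty$ toward the Calabi model to show the second fundamental form vanishes; that computation, not a citation, is the real content of the stability step.
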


Let us give a few comments about each of these two statements. 

\smallskip

{\bf A.1.} The computation of the holonomy of $\omega_{\rm TY}$ relies essentially on two facts. First, the asymptotics of $\omega_{\rm TY}$ forces the vanishing of any parallel $p$-form ($p\neq 0,n$) on $M$ or on any finite étale cover. Then, finiteness of $\pi_1(M)$ combined with the usual results on holonomy (de Rham splitting, Berger-Simons classification) allows to conclude, cf Proposition~\ref{hol one}.

\smallskip

{\bf A.2.} Bochner principle can be rephrased in the following slightly more precise way. If $x\in X\setminus D$ and $V:=E_x\simeq (\mathbb C^n)^{\otimes p}\otimes ((\mathbb C^n)^*)^{\otimes q}$, then the evaluation map at $x$, $\mathrm{ev}_x: H^0(X, E) \longrightarrow V$, induces a isomorphism onto the subspace $V^{\SU(n)}\subset V$ consisting of vectors which are invariant under the standard representation of $\SU(n)$ on $V$, cf Theorem~\ref{bochner}. In particular, parallel transport of a vector $v\in V^{\SU(n)}$ yields a holomorphic section of $E|_{X\setminus D}$ which extends automatically across $D$ (i.e. the covariant part acquires zeros and the contravariant part acquires at most logarithmic poles). 

A key technical reason for the validity of the Bochner principle is the fact that $\omega_{\rm TY}$ has {\it subquadratic volume growth}. 

\smallskip

{\bf A.3.} Here again, one can refine the statement a bit by saying that the vector bundle $E$ in the theorem is polystable with respect to $-K_X$ and that its decomposition into stable summands $E=\bigoplus E_i$ is given by the decomposition of the semisimple representation $V$ of $\SU(n)$ (cf item 1 above). Finally, in restriction to $X\setminus D$ and with respect to $\omega_{\rm TY}$, the decomposition above is orthogonal and each summand $E_i$ is parallel, cf Theorem~\ref{stability}. 

One can think of the stability of $T_X(-\log D)$ as a type of Donaldson-Uhlenbeck-Yau correspondence given that the hermitian metric induced by $\omega_{\rm TY}$ on $T_M$ is Hermitian-Einstein. Such a correspondence has been developed in a "Tian-Yau like" context by J. Zhang \cite{JZhang24}. Although his results do not cover ours, there is a common flavor in some of the computations. 

Stability is proved with respect to the class $-K_X=D$ which plays a very special role. One could wonder if stability holds for any polarization as in compact case. 

\bigskip

{\bf Case 2.  $D=D_1+D_2$.} 

\noindent
In that case, we assume additionally that $n\ge 3$ and that $D_1$ and $D_2$ are proportional (hence they are proportional to $-K_X$). In particular, one can consider the complete Ricci flat Kähler metric $\omega_{\rm CL}$ on $M$ constructed by Collins and Li. A first important difference with the case $D$ smooth is that $M$ has infinite fundamental group (see Corollary~\ref{cor pi1}) and $H^0(X, \Omega_X^1(\log D))\neq 0$.

Actually, the latter space is one-dimensional and one can find an explicit generator $\alpha$ (cf Lemma~\ref{qalb}). From the asymptotics of $\omega_{\rm CL}$, it is easy to see that $\alpha|_M$ cannot be parallel, hence Bochner principle fails, cf Remark~\ref{bochner fails}. Note that the volume growth of $\omega_{\rm CL}$ is superquadratic (it is of order $R^{\frac{4n}{n+2}}$) in contrast to $\omega_{\rm TY}$, which is one way to explain the discrepancy between the two cases studied here from the point of view of Bochner principle. 

Let us now move on to the properties $(2)-(4)$ mentioned at the beginning of the introduction. 

\begin{bigthm}
\label{thm B}
Let $n\ge 3$ be an odd integer and let $D_1, D_2$ be general hypersurfaces of degree $\frac{n+1}{2}$. Set $X:=\mathbb P^n$ and $D:=D_1+D_2$.
\begin{enumerate} 
\item The quasi-Albanese map $\mathrm{alb}_{(X,D)}:X\setminus D \to \mathrm{Alb}(X,D)$ is not locally trivial. 
\item The logarithmic tangent bundle $T_X(-\log D)$ is not polystable.
\item The universal cover of $X\setminus D$ is not biholomorphic to an analytic Zariski open subset of a compact complex manifold.  
\end{enumerate}
\end{bigthm}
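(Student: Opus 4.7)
All three parts are driven by the pencil $\pi = [s_1 : s_2] : X \dashrightarrow \PP^1$ (where $s_i$ defines $D_i$), which restricts to a morphism $\pi|_M : M \to \C^*$. Since $X = \PP^n$ is simply connected, the quasi-Albanese variety of $(X,D)$ is a torus of dimension $h^0(\Omega^1_X(\log D)) = 1$, namely $\C^*$, and integrating the generator $\alpha = d\log(s_1/s_2)$ given by Lemma~\ref{qalb} identifies $\mathrm{alb}_{(X,D)}$ with $\pi|_M$.

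\textbf{Part (1).} The fiber of $\pi|_M$ over $c \in \C^*$ is the complement in $F_c := \{s_1 - c s_2 = 0\}$ of the base locus $B := D_1 \cap D_2$. For general $D_1, D_2$, the pencil $\{F_c\}$ is a non-isotrivial family of smooth hypersurfaces of degree $d = (n+1)/2 \geq 2$ in $\PP^n$: either the moduli space of such hypersurfaces is positive-dimensional (when $(n,d) \neq (3,2)$), or --- in the marginal case $(n,d) = (3,2)$ --- a generic pencil of quadrics in $\PP^3$ has varying discrete invariants. Consequently, $\mathrm{alb}_{(X,D)}$ is not locally trivial.

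\textbf{Part (2).} Assume for contradiction that $T_X(-\log D) = \bigoplus_i E_i$ is polystable with stable summands $E_i$ of slope $0$. A stable sheaf $E$ of slope $0$ with $H^0(X,E) \neq 0$ must be isomorphic to $\cO_X$: a nonzero section produces a rank-$1$ subsheaf whose saturation $L$ is a line bundle of degree $\geq 0$, but stability forces $\mathrm{slope}(L) < \mathrm{slope}(E) = 0$ unless $L = E$, whence $E$ is a degree-$0$ line bundle admitting a section, i.e.\ $\cO_X$. Applying this both to the $E_i$ and to the $E_i^\vee$, the trivial summands of $T_X(-\log D)$ and of $\Omega^1_X(\log D)$ correspond bijectively, so
\[
 h^0(X, T_X(-\log D)) \;=\; h^0(X, \Omega^1_X(\log D)) \;=\; 1.
\]
However, an element of $H^0(X, T_X(-\log D))$ is a holomorphic vector field on $\PP^n$ tangent to both $D_1$ and $D_2$, i.e.\ an element of the Lie algebra of $\Aut(\PP^n, D_1 \cup D_2) \subset \mathrm{PGL}(n+1)$; this group is finite for general $D_1, D_2$ by a standard dimension count (for $d \geq 3$ each generic hypersurface already has trivial infinitesimal automorphisms, while for $(n,d) = (3,2)$ the Lie algebras of the stabilizers of two generic quadrics in $\PP^3$ intersect trivially since $6 + 6 < 15$). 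Hence $h^0(X, T_X(-\log D)) = 0$, a contradiction.

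\textbf{Part (3) and main obstacle.} The morphism $\pi|_M$ induces a surjection $\pi_1(M) \twoheadrightarrow \Z$ (meridians of $D_1, D_2$ map to $\pm 1$), yielding an infinite cyclic cover $M' := M \times_{\C^*, \exp} \C$ through which the universal cover $\wt M \to M$ factors. It thus suffices to show $M'$ has infinite topological type. After blowing up $B$ to resolve the indeterminacy of $\pi$, the pencil becomes a morphism $\tilde\pi$ and the induced map $M' \to \C$ extends to a proper smooth fibration over the complement of its lifted discriminant; the discriminant of $\tilde\pi|_{\C^*}$ is a finite nonempty set $\cS \subset \C^*$ (by Bertini), and its preimage $\exp^{-1}(\cS) \subset \C$ is an infinite discrete set freely permuted by the deck $\Z$-action. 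A Picard--Lefschetz vanishing-cycle analysis at each lifted singular fiber produces linearly independent classes in $H_{n-1}(M', \Q)$, forcing $H_*(\wt M, \Q)$ to be infinite-dimensional. On the other hand, any analytic Zariski-open subset $\cY \setminus Z$ of a compact complex manifold has finitely generated homology by a Mayer--Vietoris argument using compactness of both $\cY$ and $Z$, so $\wt M$ cannot be of this form. The key technical obstacle is precisely this vanishing-cycle analysis, which requires careful control of the blown-up pencil $\tilde\pi$ (its singular fibers, the vanishing thimbles, and their behavior under the $\Z$-action) and a verification that the resulting classes remain independent in homology.
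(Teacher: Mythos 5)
Your part (2) is correct, and it takes a genuinely different route from the paper: you combine $h^0(X,\Omega^1_X(\log D))=1$ (Lemma~\ref{qalb}) with the fact that a slope-zero stable summand admitting a section must be $\cO_X$, dualize, and then kill the resulting logarithmic vector field by finiteness of the linear automorphisms of a general pair of hypersurfaces. The paper instead extends $f$ to $\bar f:X\setminus \Delta\to \PP^1$, produces a surjection $T_X(-\log D)\to \cO_X$, and invokes the vanishing $H^0(X,T_X(-\log D))=0$ of Proposition~\ref{no vf}; this avoids any genericity or automorphism count (your dimension count for two quadrics in $\PP^3$ is only a heuristic as written, though the conclusion is true) and works for every snc anticanonical $D$ with fewer components than $\dim X$. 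Part (1), however, has a gap as stated: non-isotriviality of the compactified fibers $F_c$ does not by itself contradict local triviality of $f|_M$, because an isomorphism of open fibers $F_c\setminus B\cong F_{c'}\setminus B$ only gives a birational map between $F_c$ and $F_{c'}$ — controlling birational classes of the fibers is exactly the hard input (Schreieder--Shinder) that the paper reserves for its stronger statement in Proposition~\ref{prop:alb}(3). The intended argument is the much simpler one you implicitly use later: the pencil has singular members whose node lies in $M$ (Example~\ref{pencil}), and a fiber that is singular as a complex space cannot occur in a locally trivial family with smooth nearby fibers.

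The genuine gap is part (3), which is the heart of the theorem, and you have explicitly left its key step unproven. Saying that ``a Picard--Lefschetz vanishing-cycle analysis produces linearly independent classes in $H_{n-1}(M',\Q)$'' is not only incomplete but points in a direction that fails: each vanishing $(n-1)$-sphere bounds its Lefschetz thimble inside the infinite cyclic cover, so its class in $H_{n-1}$ of the total space is zero, and no independence can be extracted at that level. The paper's construction is different in an essential way: it builds $n$-cycles $L$ by sweeping the vanishing cycle along the segment $[0,1]$ joining two consecutive lifted critical values, capping with the two thimbles and correcting by a chain $\Lambda$ inside a smooth fiber (Claim~\ref{cycle}); the delicate point is then to show that the Galois translates $[L_k]$ are independent in $H_n(\widehat M,\Z)$, which is done analytically by arranging $\int_L \overline{\tau}\,\wom\neq 0$ for a suitable function $\overline{\tau}$ and pairing with the closed holomorphic forms $e^{s\widehat f}\wom$ (Claims~\ref{int non zero} and~\ref{homology}); an alternative, for $n\equiv -1\ [4]$, is Koll\'ar's Smith-theory argument via a real structure (Remark~\ref{rem kollar}). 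The remaining steps you outline are fine (passing from $\widehat M$ to $\widetilde M$ is handled in the paper by a Poincar\'e-duality/transfer argument, and finite topological type of Zariski open sets is proved there via triangulation rather than Mayer--Vietoris), but without an actual independence argument for the cycles, part (3) — and hence Theorem~\ref{univ cover} in this case — is not established by your proposal.
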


Here again, let us give a few comments about each of the three statements above. 

\smallskip

{\bf B.1.} The first item is rather easy to prove once one has identified $\mathrm{alb}_{(X,D)}$ with a Lefschetz pencil (see Example~\ref{pencil} and the discussion above it), since such a pencil necessarily has some singular fibers. However, one can significantly strengthen the statement when $n\ge 9$ and $D_1, D_2$ are very general, in which case two very general fibers of $\mathrm{alb}_{(X,D)}$ are not birational to each other. This follows from deep results on stable irrationality due to Schreieder \cite{Schreieder} as showed by Shinder \cite{Shinder}, cf Proposition~\ref{prop:alb}. Another example of log smooth Calabi-Yau pair with non isotrivial quasi Albanese map was provided in \cite{BFPT}.

\smallskip

{\bf B.2.}  Note that $T_X(-\log D)$ is however semistable with respect to any polarization for any log canonical pair $(X,D)$ with $D$ reduced and $K_X+D\equiv 0$ by \cite{GSS}.

It is also worth noting that properties $(1)-(2)$ remain true for any finite étale cover of $\mathbb P^n\setminus D$, cf Proposition~\ref{prop:alb}.

The polystability of $T_X(-\log D)$ fails in the more general setting where $X$ is Fano, $D=\sum_{i=1}^kD_i$ SNC with $k<n$ and $D_i\sim D_1$ for any index $i$. Moreover, one can determine the Jordan-Hölder filtration in that case, cf the relevant paragraph on page~\pageref{JH}. 

\smallskip

{\bf B.3.} This result is to be compared with Yau's compactification conjecture \cite{YauProb, YauNonlinear} , which asserts that complete Calabi-Yau manifolds homotopic to a finite $CW$-complex can be compactified by adding a multiple of an anti-canonical divisor.  Anderson-Kronheimer-Lebrun \cite{AKL} produced examples of complete hyperK\"ahler $4$-manifolds of type ``$A_{\infty}$" for which the second homology $H_2$ is infinitely generated, and the intersection matrix on $H_2$ is determined by the Dynkin digram $A_{\infty}$.  The construction of \cite{AKL}  uses an infinite Gibbons-Hawking construction. Hattori \cite{Hat11} computed the volume growth of these examples.  Goto \cite{Goto94} constructed hyperK\"ahler manifolds of type $A_{\infty}$ in dimension $4m$ for all $m$, and subsequently \cite{Goto98} constructed examples of $4$-dimensional hyperK\"ahler manifolds of type $A_{\infty}$ and $D_{\infty}$ using an infinite dimensional hyperK\"ahler quotient construction.  Note that our examples are not hyperK\"ahler, cf. Theorem~\ref{thm C} below. In our case, the middle homology group of the universal cover of $X\setminus D$ has infinite rank, cf Theorem~\ref{univ cover} and Remark~\ref{rem univ cover}.


In a nutshell, one could think about that last property as follows. The quasi-Albanese map $\mathrm{alb}_{(X,D)}:M\to \mathbb C^*$ has a least a singular fiber with one ordinary double point, say over $1\in \mathbb C^*$. The base change by the exponential map $e^{2\pi i \cdot}:\mathbb C\to \mathbb C^*$ yields an infinite cover $\widehat M\to M$ endowed with a fibration $\widehat f:\widehat M\to \mathbb C$ where the fibers over $\mathbb Z$ have a singular point and are permuted by the Galois group $\mathbb Z$ of the cover. Now, one can connect the two singular points of $\widehat M_0:=\widehat f^{-1}(0)$ and $\widehat M_{1}:=\widehat f^{-1}(1)$ using the vanishing cycle in the nearby smooth fibers, cf Figure~\ref{fig 1} on page \pageref{fig 1}. The upshot is that this construction yields an actual $n$-cycle $L$ fibering over the segment $[0, 1]$ and that the Galois translates of $L$ are linearly independent in $H_n(\widehat M, \mathbb Z)$. 
\bigskip

Our last result deals with the asymptotic geometry of the universal cover of $M$ equipped with $\omega_{\rm CL}$. Recall that there exists a convex function $u$ on $(\mathbb R_{>0})^2$, homogeneous of degree $\frac{n+2}{n}$ in the variable $x_1+x_2$ such that the asymptotic tangent cone of $(M,\omega_{\rm CL})$ is $((\mathbb R_{>0})^2, g_{\infty})$ where 
\[g_{\infty}=\sum_{i,j=1}^2 u_{ij}(x) dx_i\otimes dx_j\] 
and $u_{ij}=\partial^2_{ij} u$. Moreover, the volume growth of a ball of radius $R$ in $(M,\omega_{\rm CL})$ is of order $R^{\frac{4n}{n+2}}$. 

\begin{bigthm}
\label{thm C}
Let $X$ be a Fano manifold of dimension $n\ge 3$, let $D=D_1+D_2$ be an anticanonical divisor with simple normal crossings such that $\mathcal O_X(D_1)$ and $\mathcal O_X(D_2)$ are proportional and let $\omega_{\rm CL}$ be the complete Ricci flat Kähler metric on $M=X\setminus D$.
\begin{enumerate}
\item The holonomy of $(M, \omega_{\rm CL})$ is $\SU(n)$. 
\item The asymptotic tangent cone of the universal cover $(\widetilde M, \widetilde \omega_{\rm CL})$ is $(\mathbb R_{>0})^2\times \mathbb R$ with a metric of the form $g_{\infty}+h(x_1, x_2)dt^2$.
\item The volume growth of a ball of radius $R$ in $(\widetilde M,\widetilde \omega_{\rm CL})$ is of order $R^{\frac{6n}{n+2}}$. 
\end{enumerate}
\end{bigthm}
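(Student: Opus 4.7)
The three parts are intertwined and I would address them by first establishing the structure of $\widetilde M$, which then drives (1), (2) and (3) simultaneously. The guiding principle is that both the holonomy and the tangent cone are controlled by the asymptotic Calabi-type semi-flat model for $\omega_{\rm CL}$, together with the base change of the quasi-Albanese fibration $\mathrm{alb}_{(X,D)}\colon M\to\mathbb{C}^*$ along $\exp\colon\mathbb{C}\to\mathbb{C}^*$.

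\textbf{Identifying the universal cover.} The fibers of $\mathrm{alb}_{(X,D)}$ are Calabi-Yau hypersurfaces of dimension $n-1\ge 2$, hence simply connected by Lefschetz. Combined with a van~Kampen/monodromy argument to handle the finitely many singular fibers, this gives $\pi_1(M)\cong\pi_1(\mathbb{C}^*)=\mathbb{Z}$, so the universal cover is the fiber product $\widetilde M=M\times_{\mathbb{C}^*}\mathbb{C}$ with Galois group $\mathbb{Z}$ acting through translations on the $\mathbb{C}$-factor; the pullback $\widetilde\omega_{\rm CL}$ is complete and Ricci-flat.

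\textbf{Tangent cone and volume growth.} Recall that $\omega_{\rm CL}$ is asymptotic to a Calabi-type ansatz fibered over $\mathbb{C}^*\cong\mathbb{R}\times S^1$: the $(n-1)$-dimensional Calabi-Yau fibers collapse, while the $S^1$-factor has bounded but position-dependent length, whence the $2$-dimensional tangent cone $((\mathbb{R}_{>0})^2,g_\infty)$ of $M$. Lifting to $\widetilde M$ unwraps that $S^1$ into a line, and in the pointed Gromov-Hausdorff limit this direction survives with a position-dependent weight $h(x_1,x_2)$ equal, up to a constant, to the squared asymptotic length of the original $S^1$-fiber sitting over $(x_1,x_2)$. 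This yields the tangent cone $(\mathbb{R}_{>0})^2\times\mathbb{R}$ with metric $g_\infty+h(x_1,x_2)\,dt^2$ announced in (2). The volume growth then follows by integrating the volume form of the asymptotic model over a ball of radius $R$, using the homogeneity of $u$ (degree $\tfrac{n+2}{n}$ in $x_1+x_2$) and the induced scaling of $h$ to rescale the base anisotropically; a direct computation gives the exponent $\tfrac{6n}{n+2}$, consistently with the fact that the extra unwrapped direction contributes an effective length $\sim R^{2n/(n+2)}$ rather than $R$ because of the nontrivial weight $h$.

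\textbf{Holonomy and main obstacle.} Since $\omega_{\rm CL}$ is Ricci-flat Kähler, $\Hol^0(M,\omega_{\rm CL})\subset\SU(n)$, and because $\widetilde M$ is simply connected, $\Hol(\widetilde M,\widetilde\omega_{\rm CL})=\Hol^0(M,\omega_{\rm CL})$. If this were a proper subgroup, the de~Rham and Berger-Simons classifications would force either (i) a nontrivial Kähler product decomposition of $\widetilde M$, or (ii) a hyperKähler structure, i.e.\ a parallel holomorphic symplectic form. Case (i) is ruled out because a product splitting of $\widetilde M$ would induce a corresponding metric splitting of its tangent cone with a Euclidean factor, contradicting the nontrivial dependence of $h(x_1,x_2)$ on the base coordinates established in Step~3 (one would need to verify directly from the Collins-Li construction that $h$ is not constant, which seems unavoidable in any case). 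Case (ii) is ruled out because for odd $n$ hyperKähler is impossible for dimension reasons, and for even $n$ a parallel holomorphic symplectic form would force extra parallel structures on the asymptotic fiber directions of the semi-flat model which are incompatible with the Calabi-Li ansatz. The main technical obstacle is the rigorous identification in (2) of the tangent cone of $\widetilde M$: one must refine the asymptotic expansion of $\omega_{\rm CL}$ used for $M$ in order to control uniformly, as one rescales, the behavior of the unwrapped $S^1$-direction, and then show that the rescaled limit really is the smooth weighted product described, rather than some more degenerate object.
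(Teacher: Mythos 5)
Your identification of the universal cover is incorrect, and the reason you give for it is false. The closed members $X_t$ of the pencil are \emph{Fano}, not Calabi--Yau (by adjunction $K_{X_t}=(K_X+D_1)|_{X_t}=-D_2|_{X_t}$ is anti-ample), and the fibers of $f$ inside $M$ are the open manifolds $X_t\setminus Z$, $Z=D_1\cap D_2$, whose fundamental group is finite cyclic of order the index of $-K_{X_t}$ — nontrivial in general (e.g. $X=\mathbb P^n$, $\deg D_i=\tfrac{n+1}{2}$ gives order $\tfrac{n+1}{2}$). Consequently $\pi_1(M)\simeq \mathbb Z\times \mathbb Z/d\mathbb Z$ (not $\mathbb Z$), and $\widetilde M$ is a degree-$d$ finite cover of $\widehat M=M\times_{\mathbb C^*}\mathbb C$ rather than $\widehat M$ itself. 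For parts (2)--(3) this slip is harmless, since a finite cover changes neither the tangent cone nor the volume-growth exponent (this is exactly how the paper proceeds: compute on $\widehat M$, then compare balls under the degree-$d$ cover). But for (1) the infinitude of $\pi_1(M)$ is precisely the difficulty you must engage with, since $\Hol(M,\omega_{\rm CL})$ is then a possibly infinite discrete extension of the restricted holonomy.

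This is where the real gaps lie. Your exclusion of the reducible case by ``a product splitting would force a Euclidean factor in the tangent cone'' is unjustified: de Rham factors of a Ricci-flat Kähler metric need not be flat, you do not show that the warped cone $g_\infty+h\,dt^2$ admits no product splitting, and nonconstancy of $h$ is asserted rather than proved. The paper's mechanism is entirely different and is the key missing idea: a vanishing theorem for parallel $k$-forms, $0<k<n$, on \emph{every finite étale cover} of $M$, proved by extending such a form to a logarithmic form on a compactification of the cover and using the explicit asymptotics of $\omega_{\rm CL}$ (in particular in the non-generic region near $D_1\setminus D_2$, where it is modeled on the Tian--Yau metric of $D_1$) to force its norm to decay, hence vanish; combined with the holonomy-cover construction and Berger this gives $\Hol^\circ\in\{\SU(n),\Sp(\tfrac n2)\}$. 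Likewise, ``incompatible with the Collins--Li ansatz'' is not an argument against $\Sp(\tfrac n2)$: the parallel holomorphic symplectic form exists only on the infinite universal cover, and the crucial step — absent from your proposal — is to descend it to a \emph{finite} cover, which the paper does by normalizing $\widetilde\sigma^m=\widetilde\Omega$ so that the associated character of $\pi_1(M)$ takes values in $m$-th roots of unity, after which the vanishing claim applies. Finally, for (2)--(3) your account is heuristic, as you concede: the ball comparison $B(R)\approx\{|y|+|\widehat\tau|\le R^{2n/(n+2)}\}$, the rescaling computation showing exactly which terms of the lifted model metric collapse, the upper volume bound via a distance-like function $\widehat\rho$ with $|d\widehat\rho|_{\omega_{\rm CL}}$ bounded, and the transfer from the model to $\omega_{\rm CL}$ via the Collins--Li decay estimates are all missing, and they constitute the substance of the proof.
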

\smallskip

{\bf C.1.} Compared to the Tian-Yau case, the difficulty is that the fundamental group is infinite, hence the usual arguments based on the Berger-Simons classification only allow to determine the restricted holonomy group. After some reductions, one is left to eliminate the possibility that the holonomy is an infinite (discrete) extension of the symplectic group. 

Combined with Theorem~\ref{thm B}, this shows that there is no clear analogue of what an irreducible Calabi-Yau manifold should be in the non-compact case: should we use holonomy? the algebra of holomorphic differential forms? some stability condition on the logarithmic tangent bundle? And if a good notion is found, is there a hope to get a version of the decomposition result?

\smallskip

{\bf C.2-3.} As explained before, $\pi_1(M)\simeq \mathbb Z\times G$ with $G$ abelian finite. A generator of the $\mathbb Z$ factor is represented by a certain loop around $D$ which gets collapsed by $\omega_{\rm CL}$ at infinity. In the universal cover $\widetilde M$, the loop unwraps and yields a line which survives in the asymptotic cone of $\widetilde\omega_{\rm CL}$ and gives the factor $dt^2$. It also contributes to the change of volume growth from $R^{\frac{4n}{n+2}}$ in $M$ to $R^{\frac{6n}{n+2}}$ in $\widetilde M$.

\bigskip

{\bf Acknowledgements. } The authors are grateful to J\'anos Koll\'ar for interesting exchanges and for explaining to us the argument we reproduced in Remark~\ref{rem kollar}. H.G. would also like to thank Junsheng Zhang for a nice and useful discussion about his result \cite{JZhang24}. 

\medskip

{\bf Funding.} Part of this work was supported by the National Science Foundation under Grant No. DMS-1928930 while H.G. was in residence at the Simons Laufer Mathematical Science Institute (former MSRI) in Berkeley, California, during the Fall 2024 semester. H.G. is partially supported by the French Agence Nationale de la Recherche (ANR) under reference ANR-21-CE40-0010 (KARMAPOLIS).  T.C.C. is supported in part by NSERC Discovery grant RGPIN-2024-518857, and NSF CAREER grant DMS-1944952.  The authors are supported by the France-Canada Research Fund.

\section{Preliminaries}

\begin{setup*}
\label{set gen}
Let $X$ be a Fano manifold of dimension $n\ge 2$ and let $D=\sum_{i=1}^k D_i$ be a reduced divisor with simple normal crossings such that $K_X+D= 0$ as Cartier divisors. We set $M:=X\setminus D$ and fix a trivializing section $\Omega\in H^0(X,K_X+D)$ which we will freely view as a holomorphic $n$-form on $M$ with logarithmic poles along $D$. 
\end{setup*}

Note that since $D$ is ample and $n\ge 2$, Lefschetz theorem implies that $D$ is connected.

\subsection{Topological considerations}
We begin with two well-known facts.

\begin{lem}
\label{lem fg}
In the Main Setup, assume that each component $D_i$ is ample. Then there is an exact sequence 
\[H_2(X,\mathbb Z)\overset{\delta}{\longrightarrow} \mathbb Z^k\longrightarrow \pi_1(X\setminus D)\longrightarrow 1\]
where $\delta(C)= (\int_C c_1(D_i))_{i=1, \ldots, k}$.
\end{lem}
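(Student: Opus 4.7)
The plan is to show in two steps that $\pi_1(X\setminus D)$ is abelian, generated by the meridians $\gamma_1,\ldots,\gamma_k$ around each component $D_i$, and then that the relations among these generators come exactly from $H_2(X,\mathbb Z)$ via the long exact sequence of the pair $(X,X\setminus D)$. For the first step I would use that Fano manifolds are simply connected, being rationally connected by Campana and Koll\'ar-Miyaoka-Mori, so $\pi_1(X)=1$. The kernel of the natural surjection $\pi_1(X\setminus D)\to \pi_1(X)$ is the normal closure of the set of meridians, so the $\gamma_i$'s themselves normally generate $\pi_1(X\setminus D)$. The SNC hypothesis provides explicit local models: near a point of $D_{i_1}\cap\cdots\cap D_{i_r}$ the complement is homotopy equivalent to the torus $(S^1)^r$, whose fundamental group $\mathbb Z^r$ is abelian. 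Applied at codimension-$2$ strata $D_i\cap D_j$, this gives $[\gamma_i,\gamma_j]=1$ globally in $\pi_1(X\setminus D)$. Hence $\pi_1(X\setminus D)$ is an abelian quotient of $\mathbb Z^k$, equal to $H_1(X\setminus D,\mathbb Z)$.

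The second step is to use the long exact sequence of the pair $(X,X\setminus D)$ in homology:
\[H_2(X,\mathbb Z)\longrightarrow H_2(X,X\setminus D,\mathbb Z)\stackrel{\partial}{\longrightarrow} H_1(X\setminus D,\mathbb Z)\longrightarrow H_1(X,\mathbb Z)=0.\]
The key computation is $H_2(X,X\setminus D,\mathbb Z)\cong \mathbb Z^k$. By excision this equals $H_2(T,T\setminus D,\mathbb Z)$ for a tubular neighborhood $T$ of $D$; using the SNC stratification, the strata $D_I:=\bigcap_{i\in I}D_i$ with $|I|\ge 2$ contribute only to $H_{\ge 4}(T,T\setminus D)$, so only the $D_i$ themselves contribute to $H_2$, and by the Thom isomorphism each one contributes $H_0(D_i)=\mathbb Z$; the connectedness of $D_i$ is granted by Lefschetz, since $D_i$ is ample and $n\ge 2$. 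A generator is a small disk $\Sigma_i$ transverse to $D_i$ at a smooth point of $D$, and $\partial[\Sigma_i]=\gamma_i$. This yields the surjection $\mathbb Z^k\twoheadrightarrow \pi_1(X\setminus D)$ whose kernel is the image of $H_2(X)\to H_2(X,X\setminus D)\cong\mathbb Z^k$. The $i$-th coordinate of this last map is the pairing with the Thom class of $D_i$, which is Poincar\'e dual to $[D_i]\in H_{2n-2}(X)$; thus it sends $[C]\mapsto C\cdot D_i=\int_C c_1(\mathcal O_X(D_i))$, which is precisely $\delta(C)_i$.

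The main obstacle is the identification $H_2(X,X\setminus D)\cong\mathbb Z^k$ in the presence of intersections: one must check rigorously that codimension-$\ge 2$ strata contribute only to $H_{\ge 4}$. A clean approach is an iterated Mayer-Vietoris on the open cover $\{N_i\}_{i=1,\ldots,k}$ by tubular neighborhoods of the $D_i$, using the SNC local models $(\Delta^*)^r\times \Delta^{n-r}$ at each intersection stratum to control the Mayer-Vietoris connecting maps. The abelianness step is classical but uses ampleness of each $D_i$ in an essential way via Lefschetz, both to guarantee connectedness of $D_i$ and to make the local-model argument compatible with the global picture.
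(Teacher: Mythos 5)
The genuine gap is in your Step 1, at the sentence ``Applied at codimension-$2$ strata $D_i\cap D_j$, this gives $[\gamma_i,\gamma_j]=1$ globally \dots Hence $\pi_1(X\setminus D)$ is an abelian quotient of $\mathbb Z^k$.'' Two things go wrong. First, the local model near a point of $D_i\cap D_j$ only tells you that \emph{some} meridian of $D_i$ commutes with \emph{some} meridian of $D_j$, and these local loops are only conjugate (by unspecified paths) to your chosen based meridians $\gamma_i,\gamma_j$; so at best you get $[g\gamma_i g^{-1},h\gamma_j h^{-1}]=1$ for unknown $g,h$. Second, and more seriously, simple connectedness of $X$ only gives that the $\gamma_i$ \emph{normally} generate $\pi_1(X\setminus D)$, i.e.\ the group is generated by all their conjugates; pairwise commutation of the chosen representatives $\gamma_i$ (even if you had it) does not make a normally generated group abelian. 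Note also that for $k=1$ there are no strata $D_i\cap D_j$ at all, so your argument produces no relations, yet the lemma still asserts $\pi_1$ is a quotient of $\mathbb Z$. What you are trying to prove "formally" is exactly Zariski's conjecture on complements of nodal curves, which resisted precisely this naive argument and was eventually proved by Deligne and Fulton and generalized by Nori; abelianness of $\pi_1(X\setminus D)$ for an SNC divisor with ample components is a genuinely deep input, not a consequence of the local models plus normal generation.

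The paper's proof supplies this missing input by quoting Nori's weak Lefschetz theorem: since each $D_i$ is ample, $\pi_1(X\setminus D)$ is isomorphic to $\pi_1(E^\times)$, where $E^\times$ is the complement of the coordinate sub-bundles in the total space of $\mathcal O_X(D_1)\oplus\cdots\oplus\mathcal O_X(D_k)$. Since $E^\times$ retracts onto an $(S^1)^k$-bundle over the simply connected $X$, the homotopy long exact sequence of that bundle, together with Hurewicz ($\pi_2(X)\simeq H_2(X,\mathbb Z)$), yields at once the abelianness, the surjection from $\mathbb Z^k$, and the exact sequence, with $\delta$ identified as the evaluation of the $c_1(D_i)$ as in Shimada's computation. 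Your Step 2 (the long exact sequence of the pair $(X,X\setminus D)$, the identification $H_2(X,X\setminus D)\simeq\mathbb Z^k$, and the computation of the boundary map as intersection with the $D_i$) is essentially correct and would finish the proof \emph{once} abelianness of $\pi_1(X\setminus D)$ is known, but it cannot replace that step; as written the proposal proves the lemma only with $\pi_1(X\setminus D)$ replaced by $H_1(X\setminus D,\mathbb Z)$.
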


\begin{proof}
There are two ingredients. First, since each component $D_i$ is ample, a result of Nori \cite[Corollary 2.10]{Nori} implies that $\pi_1(X\setminus D)$ is isomorphic to the fundamental group of $E^\times$ where $E$ is the total space of the bundle $\mathcal O_X(D_1) \oplus \cdots\oplus \mathcal O_X(D_k)$ and $E^\times$ is the complement in $E$ of $\bigcup_{i=1}^k \ker p_i$ where $p_i$ is the projection $E\to \mathcal O_X(D_i)$.

Next, $E^\times$ retracts onto an $(S^1)^k$-bundle over $X$ and since $X$ is simply connected, Hurewicz's theorem implies that the map $\pi_2(X)\to H_2(X,\mathbb Z)$ is isomorphic. The exact sequence in the lemma is provided by the long exact sequence in homotopy associated to the fiber bundle $E^\times \to X$ and the description of the boundary map $\delta$ can be obtained as in the proof of \cite[Proposition~2]{Shimada} which deals with the case of a single component. 
\end{proof}

\begin{cor}
\label{cor pi1}
Under the assumptions of Lemma~\ref{lem fg} above, we have
\begin{enumerate}
\item The  fundamental group $\pi_1(X\setminus D)$ is abelian.
\item If $k=1$, $\pi_1(X\setminus D)$ is finite. 
\item If $k=2$, $\pi_1(X\setminus D)$ is infinite if, and only if $D_1$ and $D_2$ are proportional in $H^2(X,\mathbb R)$. In that case, $\pi_1(X\setminus D)$ is a finite extension of $\mathbb Z$. 
\end{enumerate}
\end{cor}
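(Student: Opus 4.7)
\medskip

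The plan is to deduce all three items directly from the exact sequence of Lemma~\ref{lem fg}
\[
H_2(X,\mathbb Z)\xrightarrow{\delta}\mathbb Z^k\longrightarrow \pi_1(X\setminus D)\longrightarrow 1,
\]
which already realizes $\pi_1(X\setminus D)$ as a quotient of the finitely generated abelian group $\mathbb Z^k$. Item~(1) is immediate from this: any quotient of $\mathbb Z^k$ is abelian. So the content is just a rank computation for the image of $\delta$, or equivalently for the linear map $\delta_{\mathbb R}\colon H_2(X,\mathbb R)\to\mathbb R^k$ obtained by tensoring with~$\mathbb R$, since $\pi_1(X\setminus D)$ is finite if and only if $\delta_{\mathbb R}$ is surjective and is a finite extension of $\mathbb Z^{k-\mathrm{rk}(\delta_{\mathbb R})}$ in general.

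For item~(2), with $k=1$, ampleness of $D_1$ provides a curve $C\subset X$ with $\int_C c_1(D_1)>0$ (e.g.\ a complete intersection of hyperplane sections), so $\delta_{\mathbb R}\colon H_2(X,\mathbb R)\to\mathbb R$ is nonzero, hence surjective, and the image of $\delta$ is a nonzero, hence finite-index, subgroup of $\mathbb Z$. Therefore $\pi_1(X\setminus D)$ is finite.

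For item~(3), with $k=2$, I would analyze $\delta_{\mathbb R}\colon H_2(X,\mathbb R)\to\mathbb R^2$, whose coordinates are the linear forms $C\mapsto \int_C c_1(D_i)$ for $i=1,2$. The cokernel of $\delta$ is infinite if and only if $\delta_{\mathbb R}$ is not surjective, i.e.\ if and only if its rank is at most~$1$; and the ampleness of (say) $D_1$ already forces the rank to be at least~$1$ as in (2). So $\pi_1(X\setminus D)$ is infinite exactly when $\mathrm{rk}(\delta_{\mathbb R})=1$, which means there exist $(a,b)\in\mathbb R^2\setminus\{(0,0)\}$ with $a\int_C c_1(D_1)+b\int_C c_1(D_2)=0$ for every $C\in H_2(X,\mathbb R)$; by Poincaré duality this is equivalent to $a\,c_1(D_1)+b\,c_1(D_2)=0$ in $H^2(X,\mathbb R)$, i.e.\ to $c_1(D_1)$ and $c_1(D_2)$ being proportional. (Since both are ample classes, one can further note the proportionality factor is automatically positive, but this is not needed for the statement.) In that proportional case the image of $\delta$ has rank~$1$ inside $\mathbb Z^2$, so its cokernel is a finite extension of $\mathbb Z$, which is the last assertion.

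The whole argument is essentially bookkeeping once Lemma~\ref{lem fg} is in hand; the only mildly subtle point is ensuring that ampleness of $D_i$ yields a curve pairing positively with $c_1(D_i)$, which I would simply invoke via the existence of an ample test curve, and then transferring the rank condition from divisor classes to the cokernel of $\delta$ via Poincaré duality. No step looks to be a real obstacle.
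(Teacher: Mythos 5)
Your proof is correct and follows essentially the same route as the paper: both read off everything from the exact sequence of Lemma~\ref{lem fg}, observing that $\pi_1(X\setminus D)$ is a quotient of $\mathbb Z^k$ and that its finiteness is governed by the rank of $\delta$, which by the perfect pairing between $H^2(X,\mathbb R)$ and $H_2(X,\mathbb R)$ is full exactly when there is no real linear relation among the classes $c_1(D_i)$. Your write-up merely makes explicit the rank/duality bookkeeping that the paper's proof leaves implicit.
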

\begin{proof}
Only the third item needs some explanation. The map $H_2(X,\mathbb Z)\to \mathbb Z^k$ is nontrivial since $D$ is ample, hence has finite cokernel if $k=1$. If $k=2$, the cokernel is finite if, and only if there is no non-trivial relation between $D_1$ and $D_2$ in the real Neron-Severi group.
\end{proof}

\subsection{Description of the \'etale covers}
\label{sec:covers}
Let us now give a geometric interpretation to the finite covers of $X\setminus D$ in the cases $k=1, 2$. 

\begin{lem}
\label{torsion}
In the Main Setup, assume additionally that $k=1$ and let $d$ be the index of $\mathcal O_X(D)$, or equivalently $K_X$.  Then the universal cover of $X\setminus D$ is induced by the co-restriction to the complement of $D$ of the cyclic cover corresponding the $d$-th root of $\mathcal O_X(D)$.
\end{lem}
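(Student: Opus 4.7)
The plan is to identify $\pi_1(X\setminus D)$ with $\mathbb{Z}/d\mathbb{Z}$, to construct a connected étale Galois cover of $X\setminus D$ of that degree coming from the $d$-th root of $\mathcal{O}_X(D)$, and to conclude by uniqueness of the universal cover among connected étale covers whose degree equals the order of the fundamental group.

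First I would compute the fundamental group. Specializing Lemma~\ref{lem fg} to $k=1$ gives the exact sequence
\[H_2(X,\mathbb{Z})\xrightarrow{\delta}\mathbb{Z}\longrightarrow \pi_1(X\setminus D)\longrightarrow 1\]
with $\delta(C)=\int_C c_1(D)$. Because $X$ is Fano, it is rationally connected and hence simply connected, so both $H_2(X,\mathbb{Z})$ and $H^2(X,\mathbb{Z})$ are torsion-free by universal coefficients; Kodaira vanishing moreover provides $H^1(X,\mathcal{O}_X)=H^2(X,\mathcal{O}_X)=0$, so the exponential sequence yields $\mathrm{Pic}(X)\simeq H^2(X,\mathbb{Z})$. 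The image of $\delta$ is therefore $d'\mathbb{Z}$, where $d'$ is at once the divisibility of $c_1(D)$ in $H^2(X,\mathbb{Z})$ and the index of $\mathcal{O}_X(D)$ in $\mathrm{Pic}(X)$, i.e.\ $d'=d$; hence $\pi_1(X\setminus D)\simeq \mathbb{Z}/d\mathbb{Z}$.

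Next I would build the candidate universal cover as a cyclic cover. Torsion-freeness of $\mathrm{Pic}(X)$ provides a unique $L\in \mathrm{Pic}(X)$ with $L^{\otimes d}\simeq \mathcal{O}_X(D)$, and combined with the canonical section $s_D\in H^0(X,\mathcal{O}_X(D))$ it defines the cyclic cover
\[\pi\colon Y:=\Spec_X\!\Bigl(\bigoplus_{i=0}^{d-1}L^{-i}\Bigr)\longrightarrow X,\]
branched of order $d$ along $D$, whose restriction $\pi^\circ\colon Y^\circ:=Y\setminus \pi^{-1}(D)\to X\setminus D$ is étale and Galois with group $\mathbb{Z}/d\mathbb{Z}$.

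The main point I expect to require a short argument is the connectedness of $Y^\circ$: the cover is connected if and only if $L|_{X\setminus D}$ has order exactly $d$ in $\mathrm{Pic}(X\setminus D)$, because otherwise the trivialization of $L^d$ factors through a trivialization of a lower power and $Y^\circ$ breaks into several copies of a smaller cyclic cover. The localization sequence
\[\mathbb{Z}\cdot[\mathcal{O}_X(D)]\longrightarrow \mathrm{Pic}(X)\longrightarrow \mathrm{Pic}(X\setminus D)\longrightarrow 0\]
reduces the order computation to a line of algebra: any relation $kL|_{X\setminus D}=0$ with $1\le k\le d$ lifts to $kL=m[\mathcal{O}_X(D)]=md\,L$ in $\mathrm{Pic}(X)$, and torsion-freeness of $\mathrm{Pic}(X)$ forces $k=md$, hence $k=d$. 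At that stage, $\pi^\circ$ is a connected étale Galois cover of $X\setminus D$ of degree $d=|\pi_1(X\setminus D)|$, so it must be the universal cover.
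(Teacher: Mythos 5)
Your proposal is correct and follows essentially the same route as the paper: both rest on the exact sequence of Lemma~\ref{lem fg} and identify the image $m\mathbb{Z}$ of $\delta$ with $d\mathbb{Z}$ via $\mathrm{Pic}(X)\simeq H^2(X,\mathbb{Z})\simeq \mathrm{Hom}(H_2(X,\mathbb{Z}),\mathbb{Z})$ (using $H_1(X,\mathbb{Z})=0$), so that $\pi_1(X\setminus D)\simeq \mathbb{Z}/d\mathbb{Z}$ and the degree-$d$ étale cyclic cover must be the universal cover. The only real addition is that you spell out the connectedness of the restricted cyclic cover, which the paper leaves implicit; note, however, that your parenthetical justification of the connectedness criterion argues the direction you do not use (order $<d$ implies disconnected), whereas the direction you actually need is immediate: the order of $L|_{X\setminus D}$ in $\mathrm{Pic}(X\setminus D)$ divides the order of the monodromy class in $H^1(X\setminus D,\mathbb{Z}/d\mathbb{Z})$, so order exactly $d$ forces the monodromy homomorphism $\pi_1(X\setminus D)\to \mathbb{Z}/d\mathbb{Z}$ to be surjective and the cover to be connected.
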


\begin{proof}
Let $m\in \mathbb Z_{>0}$ be such that the image of $\delta:H_2(X, \mathbb Z)\to \mathbb Z$ is $m\mathbb Z$. We will explain below that $m=d$, i.e. $\mathcal O_X(D)$ has index exactly $m$, from which the lemma follows. 

\noindent
Let us now justify our claim. Since $X$ is Fano, the first Chern class induces an isomorphism $\mathrm{Pic}(X)\overset{\sim}{\longrightarrow} H^2(X,\mathbb Z)$. Since $H_1(X, \mathbb Z)$ is trivial, the universal coefficient theorem shows that there is no torsion in $H^2(X, \mathbb Z)$ and that the canonical map $\Phi:H^2(X,\mathbb Z)\to H_2(X, \mathbb Z)^\vee$ is an isomorphism, thanks to Poincaré duality. Our assumption is that $\delta=\Phi(c_1(D)) \in H_2(X, \mathbb Z)^\vee$ is divisible by $m$ but no other larger integer, hence so is $c_1(D)$. This implies that $D$ has index $m$. 
\end{proof}

Let us now investigate the case where $k=2$. 

\begin{lem}
In the Main Setup, assume additionally that $k=1$ and that $D_1$ is linearly equivalent to $D_2$. Let $d$ be the index of $D_1$ (or $D_2$, equivalently). Then $\pi_1(M)\simeq \mathbb Z\times \mathbb Z/d\mathbb Z$.
\end{lem}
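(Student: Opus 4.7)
The statement almost surely has a typo: the hypothesis should read $k=2$, not $k=1$, so that $D=D_1+D_2$. Under this reading, my plan is to apply Lemma~\ref{lem fg} directly and then unwind the resulting quotient of $\mathbb{Z}^2$.

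First I would check the ampleness hypothesis needed for Lemma~\ref{lem fg}. Since $D_1\sim D_2$ and $D_1+D_2=-K_X$ is ample, we have $2D_1\sim -K_X$ ample, hence $D_1$ (and therefore $D_2$) is ample. Thus Lemma~\ref{lem fg} supplies the exact sequence
\[
H_2(X,\mathbb Z)\xrightarrow{\ \delta\ } \mathbb Z^{2}\longrightarrow \pi_1(X\setminus D)\longrightarrow 1,
\qquad \delta(C)=\bigl(C\cdot D_1,\ C\cdot D_2\bigr).
\]
Because $D_1$ and $D_2$ are linearly equivalent, $c_1(D_1)=c_1(D_2)$ in $H^2(X,\mathbb{Z})$, so the image of $\delta$ lies in the diagonal subgroup $\{(a,a):a\in\mathbb{Z}\}\subset\mathbb{Z}^2$.

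Next I would identify precisely which multiples of $(1,1)$ appear. Writing $\delta(C)=\bigl(C\cdot D_1\bigr)(1,1)$, the image of $\delta$ equals $m\mathbb{Z}\cdot(1,1)$, where $m\mathbb{Z}$ is the image of the single map $H_2(X,\mathbb{Z})\to\mathbb{Z}$, $C\mapsto C\cdot D_1$. The argument in the proof of Lemma~\ref{torsion} applies verbatim (using that $X$ is Fano and simply connected, so $\mathrm{Pic}(X)\cong H^2(X,\mathbb{Z})$ is torsion-free and Poincaré-dual to $H_2(X,\mathbb{Z})$): this integer $m$ is precisely the index $d$ of $\mathcal O_X(D_1)$. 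Hence
\[
\mathrm{Im}\,\delta \;=\; d\mathbb{Z}\cdot(1,1)\;\subset\;\mathbb{Z}^2.
\]

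Finally, since $\pi_1(M)$ is abelian by Corollary~\ref{cor pi1}(1), we have $\pi_1(M)=\mathrm{coker}\,\delta=\mathbb{Z}^2/\langle d(1,1)\rangle$. Performing the change of basis $f_1=(1,0)$, $f_2=(1,1)$ of $\mathbb{Z}^2$, the cokernel becomes $\mathbb{Z} f_1\oplus (\mathbb{Z} f_2/d\mathbb{Z} f_2)\cong\mathbb{Z}\times\mathbb{Z}/d\mathbb{Z}$, as claimed.

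There is no real obstacle here: the only step worth stating carefully is the identification of $m$ with the index $d$, and that is already done in Lemma~\ref{torsion}. One could also record the geometric meaning of the two generators (a loop linking $D_1$, say, gives a generator of $\mathbb{Z}/d\mathbb{Z}$ modulo the $\mathbb{Z}$-factor, while the difference of linking loops around $D_1$ and $D_2$ has infinite order), which will be useful later when describing the associated étale and universal covers.
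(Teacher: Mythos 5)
Your proof is correct and follows essentially the same route as the paper: both read the hypothesis as $k=2$, apply Lemma~\ref{lem fg} to get $\pi_1(M)=\mathbb{Z}^2/\langle d(1,1)\rangle$ (with the image of $\delta$ identified via the index argument of Lemma~\ref{torsion}), and then compute the quotient, the paper via the explicit surjection $(u,v)\mapsto(u-v,\bar u)$ and you via the equivalent change of basis. Your extra remarks on ampleness of $D_1$ and on the geometric meaning of the generators are fine but not needed.
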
 

\begin{proof} From the proof of Lemma~\ref{lem fg}, it follows that $\pi_1(M)$ is the quotient of $\mathbb Z^2$ by the subgroup $H:=\{(du,du), u\in \mathbb Z\} \simeq \mathbb Z$.  Now, the morphism 
\[\phi:\mathbb Z^2\longrightarrow \mathbb Z \times \mathbb Z_{d}\]
defined by $\phi(u,v)=(u-v, \bar u)$ is easily seen to be surjective with kernel $H$. This shows the lemma. 
\end{proof}

In what follows, we will describe geometrically the Galois covers of $M$. 
\smallskip

$\bullet$ {\it The covers $\widehat M$ and $M_\ell$.}
Since $D_1\sim_{\mathbb Z} D_2$, there is a map $f:M\to \mathbb C^*$, cf Section~\ref{sec:alb}. In particular, one can produce étale covers of $M$ by performing base change for the fibration $f$. There are two types of such base change, coming from the exponential map $\mathbb C\to\mathbb C^*$ or the map $\mathbb C^*\to \mathbb C^*$ given by $z\mapsto z^\ell$, $\ell \in \mathbb Z$. We set $\widehat M:=M\times_{\mathbb C^*} \mathbb C$ and $M_\ell:=M\times_{\mathbb C^*}\mathbb C^*$ the respective covers. Set $G:=\mathbb Z\times \mathbb Z_d\simeq \pi_1(M)$; under the Galois correspondence, $\widehat M$ and $M_\ell$ correspond respectively to the kernels of the maps $\mathrm{pr}_1:G\to \mathbb Z $ and its reduction modulo $m$, $\overline{\mathrm{pr_1}}^\ell :G\to \mathbb Z_\ell$. 

\smallskip

$\bullet$ {\it The cover $N_1$.} 
The finite étale cover corresponding to the kernel of the second projection  $\mathrm{pr}_2:G\to \mathbb Z_d$ can be interpreted geometrically as the degree $d$ cyclic cover $N_1\to M$ induced by the $d$-torsion line bundle $\mathcal O_X(D_1)|_{M}$. It is not hard to see that the universal cover $\widetilde M$ of $M$ is isomorphic to $N_1\times_M \widehat M$.

\smallskip

$\bullet$ {\it The cover $N_{12}$.} 
Another geometrically relevant cover of $M$ is the cyclic cover $N_{12}\to M$ induced by the $2d$-torsion line bundle $\mathcal O_X(D_1+D_2)|_{M}\sim_{\mathbb Z}  \mathcal \mathcal O_X(-K_X)|_{M}$, which we will now relate to the covers mentioned above and describe from the group theoretic vantage point. \\

For $i=1,2$, we let $s_i$ be a section of $\mathcal O_X(D_i)$ whose divisor of zeros is $D_i$, and we denote by $L$ the bundle such that $dL\sim_{\mathbb Z} D_i$.  Recall that one can choose $f$ to coincide with $\frac{s_1}{s_2}$ and that $N_1$ can be described as $\{v\in L; v^d=s_1\}$, while $N_{12}$ can be identified with $\{v\in L; v^{2d}=s_1s_2\}$, and the maps $N_1\to M$ and $N_{12}\to M$ are induced by the natural projection $p_L:L\to X$. We have the following diagram
\[
\begin{tikzcd}
N_1\times_M M_{2d} \arrow[rr, "{{\rm pr}_2}"] \arrow[dr, "\alpha"] \arrow[dd, "{{\rm pr}_1}", swap] && M_{2d} \arrow[d]  \\
&N_{12}  \arrow[r, "\beta"] \arrow[dr] &M_2 \arrow[d]\\
N_1 \arrow[rr]&&M
\end{tikzcd}
\]
where the maps $\alpha$ and $\beta$ are given by $\alpha(v,w)=\frac vw$ and $\beta(v)=(p_L(v), \frac{v^d}{s_2})$ where $v\in L, w\in \mathbb C^*$ and the variable in $M$ is omitted. Let 
$t:\Z_d\to \Z_{2d}$ be the injection induced by multiplication by $2$ on $\Z$. It follows from the description of the cover morphism $\alpha$ that the latter corresponds to the group morphism $\Z_d\times \Z_{2d}\to \Z_{2d}$ given by $(\bar k,\bar \ell)\mapsto t(\bar{k})-\bar \ell$ and $N_{12}\to M$ is given by the kernel of the map $G\to \Z_{2d}$ defined by $(\ell, \bar k)\mapsto t(\bar k)-\bar \ell$. 

\subsection{Elementary vanishing theorems}
The following proposition is likely to be well-known to the experts, but we add it here for lack of a proper reference. 
\begin{prop}
\label{easy vanishing}
In the Main Setup, assume that each component $D_i$ is ample. Then for any $k \le p < n$, we have 
\[H^0(X,\Omega^p_X(\log D))=0.\]
\end{prop}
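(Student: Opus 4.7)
The plan is to argue by induction on the number of components $k\ge 1$, using the residue sequence
\[
0 \to \Omega^p_X(\log(D-D_k)) \to \Omega^p_X(\log D) \to \Omega^{p-1}_{D_k}(\log (D-D_k)|_{D_k}) \to 0,
\]
together with adjunction to pass from $(X,D)$ to a lower-dimensional log Calabi--Yau pair. For the base case $k=1$, the residue sequence reads $0\to\Omega^p_X\to\Omega^p_X(\log D_1)\to\Omega^{p-1}_{D_1}\to 0$. Since $X$ is Fano, Kodaira vanishing and Hodge symmetry give $H^0(X,\Omega^p_X)=0$ for $p\ge 1$. For $2\le p<n$ the Lefschetz hyperplane theorem applied to the ample smooth divisor $D_1$ identifies $H^{p-1,0}(D_1)\simeq H^{p-1,0}(X)=0$. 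For $p=1$ one has $H^0(D_1,\cO_{D_1})=\C$, but the connecting map sends $1$ to $c_1(\cO_X(D_1))\in H^{1,1}(X)$, which is nonzero by ampleness, so the connecting map is injective.

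For the inductive step, I assume the statement in dimension $n-1$ with $k-1$ components and apply the residue sequence above. The quotient term is handled by adjunction: $K_{D_k}=-(D-D_k)|_{D_k}$, so $(D_k,(D-D_k)|_{D_k})$ is a log Calabi--Yau Fano pair of dimension $n-1$ with $k-1$ ample SNC components, and the inductive hypothesis applied with $p'=p-1$ (in the admissible range $k-1\le p-1<n-1$) gives vanishing of its $H^0$. For the subsheaf term, I use the weight filtration of $\Omega^p_X(\log(D-D_k))$, whose graded pieces are $\bigoplus_{|I|=j,\, I\subset\{1,\ldots,k-1\}}\Omega^{p-j}_{D_I}$ for $0\le j\le k-1$. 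Adjunction again gives $-K_{D_I}=\sum_{i\notin I}D_i|_{D_I}$, which is ample (it contains the ample restriction $D_k|_{D_I}$), so each $D_I$ is a positive-dimensional Fano manifold. Since $p\ge k>j$, one has $p-j\ge 1$, and Kodaira vanishing together with Hodge symmetry on $D_I$ yield $H^{p-j,0}(D_I)=0$; combined with $H^0(X,\Omega^p_X)=0$ at weight $j=0$, the long exact sequences of the weight filtration force $H^0(X,\Omega^p_X(\log(D-D_k)))=0$.

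The subtle point, and what motivates this particular induction, is the edge case $p=k$: trying to extract $H^0$ directly from the weight filtration of the full sheaf $\Omega^p_X(\log D)$ would produce the top graded piece $\cO_{D_1\cap\cdots\cap D_k}$, with nontrivial sections on the Calabi--Yau complete intersection $D_1\cap\cdots\cap D_k$ (here the Fano vanishing argument does not apply). Peeling off $D_k$ via the residue sequence transfers this potential obstruction to the analogous edge case $p'=k-1$ on the lower-dimensional log Calabi--Yau pair $(D_k,(D-D_k)|_{D_k})$, where the induction takes over and eventually reduces to the base case $k=1$, in which the obstruction becomes the connecting-map computation with $c_1(\cO_X(D_1))$.
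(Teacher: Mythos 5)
Your proof is correct, and its skeleton is the same as the paper's: induction on the number of components, residues, adjunction identifying the residue target as a lower-dimensional log Calabi--Yau Fano pair, Kodaira vanishing plus Hodge symmetry, Lefschetz for the base case $k=1$, $p\ge 2$, and the pairing with $c_1(\cO_X(D_1))$ for $p=k=1$ (your connecting-map computation is exactly the paper's residue-theorem argument). The one genuine difference is the organization of the inductive step. The paper takes the residue of a section along \emph{every} component $D_i$ simultaneously: each residue lives on the $(n-1)$-dimensional pair $(D_i,\Delta_i)$ and vanishes by induction, and since the common kernel of all residue maps is $\Omega^p_X$, the section is a genuine holomorphic $p$-form, killed by Kodaira on $X$. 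You peel off only $D_k$, so after the inductive hypothesis disposes of the quotient you must still kill $H^0(X,\Omega^p_X(\log(D-D_k)))$; your weight-filtration argument for this is correct (each stratum $D_I$ with $I\subseteq\{1,\dots,k-1\}$ is Fano because $-K_{D_I}$ is a sum of ample restrictions including $D_k|_{D_I}$, and $p-j\ge 1$ for every weight $j\le k-1<p$), but it is precisely the step that the paper's all-components residue makes unnecessary. Alternatively, had you quantified the inductive hypothesis over all pairs with $k-1$ components (any dimension) rather than only dimension $n-1$, the subsheaf term would follow at once since $k-1\le p<n$ holds on $X$ itself, with no weight filtration needed. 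Your closing remark about the edge case $p=k$ correctly identifies why a naive weight-filtration attack on the full sheaf $\Omega^p_X(\log D)$ stalls at the top piece $\cO_{D_1\cap\cdots\cap D_k}$; both your route and the paper's evade it in the same way, by shifting the difficulty into residues on lower-dimensional log Calabi--Yau pairs.
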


\begin{proof}
We argue by induction on $k$. 
\medskip

\emph{Case $k=1, p=1$.}

\noindent
The residue $\sigma_D$ of $\sigma$ along $D$ is an element in  $H^0(D,\mathcal O_{D})$, hence a constant. Moreover, one has $\sigma_D [D]=0 \in H^2(X, \mathbb C)$ by the residue theorem, cf e.g. \cite[Theorem~3.1]{Per}. This implies that $\sigma$ is actually holomorphic but since $X$ is Fano one has $H^0(X, \Omega_X^1)=0$ hence $\sigma \equiv 0$ as desired. \\
 
\emph{Case $k=1, p\ge 2$}.  

\noindent
Let $\sigma \in H^0(X,\Omega_X^p(\log D))$. The residue $\sigma_D$ of $\sigma$ along $D$ is an element in  $H^0(D,\Omega^{p-1}_{D})$. By Lefschetz theorem for Hodge groups, the restriction map 
 \[H^0(X,\Omega^{p-1}_{X})\to H^0(D,\Omega^{p-1}_{D})\]
is an isomorphism for $p\le n-1$ but the LHS vanishes by Kodaira's vanishing theorem since $X$ is Fano. We infer that $\sigma_D=0$, hence $\sigma$ is actually an element in $H^0(X,\Omega_X^p)$ but the latter space is zero by Kodaira theorem again. \\

\emph{Case $k>1$}.   

\noindent
Let $\sigma \in H^0(X,\Omega_X^p(\log D))$ with $k \le p < n$. The residue $\sigma_{D_i}$ of $\sigma$ along $D_i$ is an element in  $H^0(D_i,\Omega^{p-1}_{D_i}(\log\Delta_i))$ where $\Delta_i:=(\sum_{j\neq i} D_j)|_{D_i}$ has $k-1$ components. Now, by adjunction we have $K_{D_i}+\Delta_i=0$. In particular $D_i$ is Fano of dimension $n-1$ and we can apply induction since $k-1\le p-1<n-1$ so that $\sigma_{D_i}=0$ and $\sigma$ is an element in $H^0(X,\Omega_X^p)$. We can now conclude as in the previous case. 
\end{proof}

\begin{rem}
The result above is sharp. Indeed, assume that each $D_i$ is $\mathbb Q$-proportional to $-K_X$ so that there exist positive integers $d, d_1, \ldots, d_k$ such that $-dK_X= d_iD_i$ for all indices $i$. Set $f_i:=\frac{s_i^{\otimes d_i}}{s_k^{\otimes d_k}}$ for $1\le i \le k-1$ and consider the form $\sigma_p:=d \log f_1 \wedge \cdots \wedge d \log f_{p}$ for $1\le p \le k-1$. It yields a logarithmic $p$-form $\sigma_p\in H^0(X,\Omega^p(\log D))$. Since $D$ is snc and $k\le n-1$, the intersection $W:=D_1\cap \ldots \cap D_{k}$ has positive dimension and near a given point of $W$ one can find a system of holomorphic coordinates such that $D_i=(z_i=0)$ and 
\[\sigma_{k-1}= \sum_{i=1}^{k} (-1)^{k-i}d_1 \cdots \hat d_i\cdots d_k   \frac{dz_1\wedge \ldots \wedge \widehat {dz_i} \wedge \cdots \wedge d z_{k}}{z_1 \cdots \hat z_i \cdots  z_{k}} \] is not zero (hence $\sigma_p$ is also non-zero for $1\le p \le k-1$). 
\end{rem}

The following result is classical (see e.g. \cite[Theorem~2.8]{SW}) when $D$ is smooth.  
\begin{prop}
\label{no vf}
In the Main Setup, assume that $k<n$ and that each component $D_i$ is ample. Then we have 
\[H^0(X,T_X(-\log D))=0.\]
\end{prop}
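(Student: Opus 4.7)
The plan is to reduce the statement directly to the previous vanishing result, Proposition~\ref{easy vanishing}, by using the trivializing section $\Omega \in H^0(X, K_X+D) = H^0(X, \Omega^n_X(\log D))$ provided by the hypothesis $K_X + D = 0$. The key observation is that contraction with $\Omega$ induces a canonical sheaf morphism
\[
\Phi : T_X(-\log D) \longrightarrow \Omega^{n-1}_X(\log D), \qquad \xi \longmapsto \iota_\xi \Omega,
\]
and I would show that $\Phi$ is in fact an \emph{isomorphism} of $\mathcal{O}_X$-modules. Once this is established, taking global sections yields
\[
H^0(X, T_X(-\log D)) \simeq H^0(X, \Omega^{n-1}_X(\log D)),
\]
and the right-hand side vanishes by Proposition~\ref{easy vanishing} applied with $p = n-1$, whose range of validity $k \le p < n$ accommodates precisely the assumption $k < n$.

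The verification that $\Phi$ is an isomorphism is a purely local computation at a point of $D$. Choose holomorphic coordinates $(z_1, \dots, z_n)$ in which $D = \{z_1 \cdots z_k = 0\}$. Then $T_X(-\log D)$ is freely generated by $z_1 \partial_{z_1}, \dots, z_k \partial_{z_k}, \partial_{z_{k+1}}, \dots, \partial_{z_n}$ while $\Omega^{n-1}_X(\log D)$ is freely generated by the $(n-1)$-fold wedges obtained from the basis $\frac{dz_1}{z_1}, \dots, \frac{dz_k}{z_k}, dz_{k+1}, \dots, dz_n$ by deleting one factor. Writing $\Omega = u \cdot \frac{dz_1}{z_1} \wedge \dots \wedge \frac{dz_k}{z_k} \wedge dz_{k+1} \wedge \dots \wedge dz_n$ with $u$ a nowhere vanishing holomorphic function (since $\Omega$ trivializes $\mathcal{O}_X(K_X+D)$), the contraction of each basis element of $T_X(-\log D)$ with $\Omega$ is $\pm u$ times the corresponding basis element of $\Omega^{n-1}_X(\log D)$. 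Hence $\Phi$ is an isomorphism at every point.

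With the isomorphism in hand, the conclusion is immediate: Proposition~\ref{easy vanishing} gives $H^0(X, \Omega^{n-1}_X(\log D)) = 0$ under the assumption $k \le n-1$, which is exactly $k < n$.  I do not anticipate a genuine obstacle here; the only point that deserves care is the local trivialization check confirming that $\Phi$ is a sheaf isomorphism rather than merely an injection, but this follows transparently from the SNC local model together with the fact that $\Omega$ has nowhere vanishing residue along the stratum $D_1 \cap \cdots \cap D_k$.
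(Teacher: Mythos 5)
Your proof is correct, but it takes a genuinely different route from the paper. The paper argues by induction on the number $k$ of components: it restricts a logarithmic vector field to a component $D_i$, uses adjunction ($K_{D_i}+\Delta_i=0$, with $\Delta_i$ having $k-1$ components) together with the classical case $k=1$ (quoted from the literature) to kill the restriction, and then concludes from $\xi\in H^0(X,T_X\otimes\mathcal O_X(-D))\simeq H^0(X,\Omega_X^{n-1})=0$ via Kodaira vanishing. You instead exploit the log Calabi--Yau condition globally: since $\Omega^n_X(\log D)=\mathcal O_X(K_X+D)$ is trivialized by $\Omega$, contraction $\xi\mapsto\iota_\xi\Omega$ realizes the standard bundle isomorphism $T_X(-\log D)=\big(\Omega^1_X(\log D)\big)^\vee\simeq\Omega^{n-1}_X(\log D)\otimes\big(\Omega^n_X(\log D)\big)^{-1}\simeq\Omega^{n-1}_X(\log D)$, and your local SNC computation (nowhere vanishing $u$ because $\Omega$ trivializes the line bundle) confirms it is an isomorphism, not merely an injection; Proposition~\ref{easy vanishing} with $p=n-1$ then applies exactly when $k\le n-1$, i.e. $k<n$, and there is no circularity since that proposition is proved independently. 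Your argument is shorter, avoids a second induction and the external reference for the base case, and has the added virtue of making the sharpness transparent: for $k=n$ the needed range $k\le p=n-1$ fails, consistent with the nonvanishing example of Remark~\ref{p2 lq}. The only cosmetic point is that at a given point of $X$ only the components of $D$ through that point appear in the local model, so the trivialization check should be phrased accordingly; the computation is unchanged.
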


\begin{proof}
We proceed by induction on $k$, with $k=1$ being known already. So we assume that $k\ge 2$. Fix an index $i$ and set $\Delta_i:=(\sum_{j\neq i} D_j)|_{D_i}$. Note that $D_j|_{D_i}$ is irreducible (and smooth) since $k<n$ and each $D_i$ is ample, therefore $\Delta_i$ has $k-1<n-1$ components. Moreover $-K_{D_i}\sim \Delta_i$ is ample by assumption.   

Now, let $\xi \in H^0(X,T_X(-\log D))$; the restriction of $\xi$ to $D_i$ yields an element in $\xi_i\in H^0(D_i,T_{D_i}(-\log \Delta_i))$. By induction, $\xi_i=0$ hence $\xi \in H^0(X,T_X\otimes  \mathcal O_X(-D)))$. Since $T_X\simeq \Omega_X^{n-1}\otimes K_X^{-1}$, the latter space is isomorphic to $H^0(X,\Omega_X^{n-1})$ which vanishes by Kodaira theorem, since $X$ is Fano.
\end{proof}

\begin{rem}
\label{p2 lq}
Proposition~\ref{no vf} does not hold if $k= n$ as one can see by considering $(\mathbb P^2, L+Q)$ where $L$ is a line and $Q$ is a quadric. In the second case, the action of $\mathrm{SO}(2,\mathbb C)$ yields a non-zero logarithmic vector field. E.g., one can identify $M:=\mathbb P^2\setminus(L\cup Q)$ with $\mathbb C^2\setminus (x^2+y^2+1=0)$ and $\xi:=y\frac{\partial}{\partial x}-x\frac{\partial}{\partial y}$ is such a vector field. Moreover, $\xi$ is vertical for the fibration $f: M\to \mathbb C^*$ given by $f(x,y)=x^2+y^2+1$ and extends as a section of $T_{\mathbb P^2/\mathbb P^1}$ for the rational fibration $\overline f:\mathbb P^2\dashrightarrow \mathbb P^1$ which is well-defined away from $L\cap Q=\{[1:i:0],[1,-i:0]\}$ and whose fibers over $0$ and $\infty$ are respectively $Q$ and $2L$. In particular, $\xi \in H^0(\mathbb P^2, T_{\mathbb P^2}(-\log(L+Q)))$.

The proof of Proposition~\ref{no vf} fails in this example as restricting the pair to the line yields the new pair $(\mathbb P^1, \{0\}+\{\infty\})$ which on which lives the logarithmic vector field $z\frac{\partial}{\partial z}$. 
\end{rem}

\section{Case of one divisor}

In this section, we work in the following setup. 

\begin{setup}
\label{set one}
In the Main Setup, we assume additionally that $D$ is irreducible (hence smooth) and consider the complete Ricci flat metric $\omega$ on $M$ constructed by Tian-Yau. We denote by $\nabla$ the induced Chern connection on $T_{M}$.
\end{setup}

\subsection{Holonomy}
\begin{prop}
\label{hol one}
In Setup~\ref{set one}, one has $\Hol(M, \omega)=\SU(n)$.
\end{prop}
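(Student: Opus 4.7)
The plan is to show $\Hol(M,\omega) = \SU(n)$ by exploiting that the inclusion $\Hol(M,\omega) \subseteq \SU(n)$ is automatic from the Kähler Ricci-flat condition, so the task is really the reverse inclusion. Since $\SU(n)$ is connected, it suffices to prove $\Hol^0(M,\omega) = \SU(n)$, and since $\pi_1(M)$ is finite by Corollary~\ref{cor pi1}(2), this in turn reduces to showing $\Hol(\widetilde M, \widetilde\omega) = \SU(n)$, where $\widetilde M \to M$ denotes the (finite) universal cover and $\widetilde \omega$ the pulled-back metric. By Lemma~\ref{torsion}, $\widetilde M$ identifies with $\widetilde X \setminus \widetilde D$, where $\widetilde X$ is the cyclic cover of $X$ associated to the index of $-K_X$ (which is Fano, as the adjunction computation gives $K_{\widetilde X}+\widetilde D=0$ with $\widetilde D$ ample) and $\widetilde D$ is a smooth anticanonical divisor. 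The metric $\widetilde \omega$ is complete Ricci-flat Kähler on $\widetilde M$, with Calabi-ansatz asymptotics along $\widetilde D$.

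Since $(\widetilde M, \widetilde \omega)$ is complete, simply connected, Ricci-flat and Kähler, the de Rham decomposition theorem yields an isometric splitting $\widetilde M = \C^r \times \prod_{i=1}^s M_i$ with each $M_i$ irreducible, and Berger-Simons then forces $\Hol(M_i) \in \{\SU(k_i),\, \Sp(m_i)\}$, with $k_i = \dim_\C M_i$ and $k_i = 2m_i$ in the symplectic case. Every alternative to ``$r = 0$, $s = 1$, and $\Hol(M_1) = \SU(n)$'' produces a non-zero parallel holomorphic $(p,0)$-form on $\widetilde M$ for some $0 < p < n$: the coordinate form $dz_1$ on the flat factor if $r \ge 1$, the pullback of a holomorphic volume form from one factor if $s \ge 2$, or the holomorphic symplectic form of any hyperkähler factor. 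So it is enough to rule out the existence of any such parallel form.

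Let $\sigma$ be such a parallel holomorphic $p$-form on $\widetilde M$ with $1 \le p \le n-1$. Parallelism forces $|\sigma|_{\widetilde \omega}$ to be constant, and the key analytic step --- which I expect to be the main obstacle --- is to use the Calabi-ansatz asymptotics of $\widetilde \omega$ near $\widetilde D$ to convert this pointwise bound into the statement that $\sigma$ extends across $\widetilde D$ with at worst logarithmic poles, yielding a section $\sigma \in H^0(\widetilde X, \Omega^p_{\widetilde X}(\log \widetilde D))$. This extension rests on a precise model-comparison of $\widetilde \omega$ with the Calabi ansatz near $\widetilde D$ and a matching non-degeneracy of the induced metric on $\Omega^p_{\widetilde X}(\log \widetilde D)$, so that the pointwise bound translates into the correct growth of the coefficients of $\sigma$ in local coordinates adapted to $\widetilde D$. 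Granted the extension, Proposition~\ref{easy vanishing} applied to $(\widetilde X, \widetilde D)$ in the case $k=1$ gives $H^0(\widetilde X, \Omega^p_{\widetilde X}(\log \widetilde D)) = 0$ for $1 \le p \le n-1$, forcing $\sigma \equiv 0$. This yields $\Hol(\widetilde M, \widetilde \omega) = \SU(n)$ and concludes the proof.
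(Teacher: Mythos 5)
Your proposal is correct in outline and follows the same skeleton as the paper's proof of Proposition~\ref{hol one}: reduce to the finite universal cover via Corollary~\ref{cor pi1} and Lemma~\ref{torsion}, rule out parallel $(p,0)$-forms of intermediate degree using the asymptotics of the metric along the boundary divisor, and conclude by de Rham splitting and Berger--Simons. Where you diverge is in how the parallel form is killed. The paper (Claim~\ref{no p forms}) writes $\sigma$ in logarithmic coordinates near a general point of $\widetilde D$ and computes its pointwise norm against the model \eqref{asymp}: the coefficients of the terms containing $dw_1/w_1$ are weighted by $(-\log|w_1|)^{\frac{n-p}{n}}\to\infty$ and the remaining ones by a negative power of $-\log|w_1|$, so constancy of $|\sigma|$ forces extension of the coefficients, then vanishing of the residue part, then decay of $|\sigma|$ near $\widetilde D$, hence $\sigma\equiv 0$ --- no algebraic vanishing theorem is invoked (the paper even remarks that the Bochner principle \emph{recovers} the $k=1$ case of Proposition~\ref{easy vanishing}). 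You instead stop at the extension statement $\sigma\in H^0(\widetilde X,\Omega^p_{\widetilde X}(\log\widetilde D))$ and quote Proposition~\ref{easy vanishing} for $(\widetilde X,\widetilde D)$; this is legitimate, since by Lemma~\ref{torsion} the compactified cover is the smooth cyclic cover with $K_{\widetilde X}+\widetilde D=0$ and $\widetilde D$ ample, so the hypotheses hold and there is no circularity. What your route buys is a cleaner conceptual ending; what it costs is that the step you defer (converting the constant norm into logarithmic growth of the coefficients) is precisely the paper's computation \eqref{norm}, and once that computation is done the vanishing already follows directly, so the algebraic input is an extra, not a shortcut. Note also that the extension should be carried out near a \emph{general} point of $\widetilde D$, with the remaining codimension-two locus handled by Hartogs.

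One imprecision worth fixing: the inclusion $\Hol(M,\omega)\subset\SU(n)$ for the \emph{full} holonomy is not automatic from Ricci-flatness, which only controls the restricted holonomy $\Hol^\circ$ (flat Kähler quotients such as bielliptic surfaces have full holonomy outside $\SU$). Here it holds, as in the paper, because the global logarithmic volume form $\Omega$ has constant norm with respect to the Tian--Yau metric (the Monge--Amp\`ere normalization), so the Bochner formula $\Delta|\Omega|^2=|\nabla\Omega|^2$ makes $\Omega$ parallel along all loops; you do need this one-line argument for your reduction ``it suffices to prove $\Hol^\circ=\SU(n)$'' to be valid.
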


\begin{proof}
Here, all norms and covariant derivatives are taken with respect to $\omega$. We fix a base point $x_0\in M$ and set $G:=\Hol(M, \omega)$. The logarithmic volume form $\Omega$ has constant norm hence it is parallel by Bochner formula $\Delta |\Omega|^2=|\nabla \Omega|^2$. Therefore $G\subset \SU(n)$. 

Consider the universal cover $(\widetilde M, \widetilde \omega)$, which comes equipped with a finite \'etale Galois map $\pi^\circ:\widetilde M\to M$ thanks to Corollary~\ref{cor pi1}. This map induces an identification of $\Hol(\widetilde M, \widetilde \omega)$ with the identity component $G^\circ$ of $G$.

Therefore one can extend the latter map to a finite map $\pi:Y\to X$ between normal projective varieties ramifying over $D$ (e.g. since $X$ is simply connected), say at order $m\ge 2$. Set $\widetilde D=\pi^{-1}(D)$ so that  $K_{Y}=\pi^*(K_X+(1-\frac 1m)D)$. In particular, $Y$ is $\mathbb Q$-Fano (i.e. $Y$ is klt and and $-K_{Y}$ is ample). Actually, we know by Lemma~\ref{torsion} that $Y\to X$ is simply the cyclic cover associated to a $d$-th root of $\mathcal O_X(D)$, hence $Y$ is smooth. However, we do not need to know a priori that $Y$ is smooth in the argument below, which we will use again in a more general context cf Claim~\ref{no p forms 2}.  

Since $Y$ is normal, one can find at a general point $\widetilde x \in \widetilde D$ local holomorphic coordinates $(w_i)$ centered at $\widetilde x$ such that $\widetilde D=(w_1=0)$ (resp. coordinates $(z_i)$ centered at $\pi(x)$ such that $D=(z_1=0)$) such that $\pi(w_1,\ldots, w_n)=(w_1^m, w_2, \ldots, w_n)$. Moreover, $\omega$ is quasi-isometric to the model metric
\begin{equation}
\label{asymp}
\frac{idz_1\wedge d\bar z_1}{|z_1|^2 (-\log |z_1|)^{1-\frac 1n}}+(-\log |z_1|)^{\frac 1n}\sum_{k\ge 2} idz_k\wedge d\bar z_k.
\end{equation}
This shows that the asymptotics of $\widetilde \omega$ near a general point of $\widetilde D$ are the same as those of $\omega$ near $D$. 

\begin{claim}
\label{no p forms}
Let $0<p<n$ be an integer and let $\sigma \in \mathcal C^\infty(\widetilde M, \Omega_{\widetilde M}^{p,0})$ be a parallel $(p,0)$-form (i.e. $\nabla \sigma =0$). Then $\sigma \equiv 0$.
\end{claim}

\begin{proof}[Proof of Claim~\ref{no p forms}]
One can view $\sigma$ as a holomorphic section over $\widetilde M$ of the holomorphic vector bundle $\Omega_{Y}^p(\log \widetilde D)|_{\widetilde M}$. At a general point $\widetilde x\in \widetilde D$, one can write 
\[\sigma = \sum_{|J|=p-1} f_J \frac{dw_1}{w_1}\wedge dw_J+\sum_{|K|=p,\, 1\not \in K} f_K  dw_K.\]
The functions $f_J, f_K$ are defined and holomorphic away from $\widetilde D$. Moreover, it follows from \eqref{asymp} that one has
\begin{equation}
\label{norm}
|\sigma|^2 \approx \sum_J |f_J|^2 (-\log |w_1|)^{\frac {n-p}n}+\sum_K |f_K|^2 (-\log |w_1|)^{-\frac qn}.
\end{equation}
Since $\sigma$ is parallel, its norm is constant. In particular, $f_J, f_K$ are $L^2$ in a neighborhood of $\widetilde D$ hence extend holomorphically across $\widetilde D$. Since $|\sigma|$ is bounded and $p<n$, the functions $f_J$ have to vanish along $\widetilde D$. Then, $|\sigma|$ goes to zero near $\widetilde D$, hence $|\sigma|$ has to vanish everywhere. 
\end{proof}

We can now conclude easily. If the holonomy of $(\widetilde M, \widetilde \omega)$ were strictly included in $\SU(n)$, it would split as a nontrivial product $G^\circ=\prod G_i$ with $G_i\subset \SU(n_i)$ for some $n_i<n$. Parallel transport would yield a non-zero parallel $(0,n_i)$-form on $\widetilde M$, contradicting Claim~\ref{no p forms}. Therefore $G^\circ=\SU(n)$ and the result follows since $G^\circ \subset G\subset \SU(n)$.
\end{proof}

\subsection{Bochner principle}

Let $p,q\ge 0$ be two integers and let 
\[E:=T_X(-\log D)^{\otimes p}\otimes \Omega_X(-\log D)^{\otimes q},\] it is a holomorphic vector bundle on $X$. We also consider the $\mathscr C^\infty$ vector bundle $\mathcal E= (T_{M}^{1,0})^{\otimes p}\otimes (\Omega_{M}^{1,0})^{\otimes q}$ on $M$. We denote by $\mathscr C^\infty_{\parallelsum}(M,  \mathcal E)$ the space of global parallel sections of $\mathcal E$.

\begin{thm}[Bochner principle]
\label{bochner}
In Setup~\ref{set one}, the natural restriction map induces a one-to-one correspondence  
\[H^0(X,E)\overset{\sim}{\longrightarrow} \mathscr C^\infty_{\parallelsum}(M,  \mathcal E).\]

\noindent
 In particular, this space is isomorphic to  $(V^{\otimes p} \otimes (V^*)^{\otimes q})^G$ where $V:=\mathbb C^n$ is the standard representation and $G:=\SU(n)$.
\end{thm}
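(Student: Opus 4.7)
The goal is to show that the natural restriction $H^0(X,E) \to \mathscr C^\infty(M, \mathcal E)$ is injective (obvious by density of $M$ and holomorphicity) and has image exactly $\mathscr C^\infty_{\parallelsum}(M,\mathcal E)$; combined with the holonomy principle and Proposition~\ref{hol one}, this yields the identification with $(V^{\otimes p}\otimes (V^*)^{\otimes q})^{\SU(n)}$. The cornerstone is the standard Kähler Weitzenböck formula: for a holomorphic section $\sigma$ of a tensor bundle built from $T_M$ and $T_M^*$, one has $\Delta_\omega |\sigma|^2 = |\nabla \sigma|^2 + (\text{curvature})$, and on our Ricci-flat manifold the curvature term, which involves only Ricci-type traces on tensors in $T^{\otimes p}\otimes (T^*)^{\otimes q}$, vanishes identically, leaving $\Delta_\omega |\sigma|^2 = |\nabla \sigma|^2$; in particular $|\sigma|^2$ is subharmonic.

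\textbf{Cutoff argument.}
To see that the restriction lands in parallel sections, let $\sigma\in H^0(X, E)$, viewed as a holomorphic section of $\mathcal E$ on $M$. Pick radial cutoffs $\chi_R$ with $\chi_R\equiv 1$ on $B_R(x_0)$, supported in $B_{2R}(x_0)$, and $|\nabla \chi_R|_\omega\leq C/R$. Integrating the Bochner identity against $\chi_R^2$, using Kato's inequality $|\nabla|\sigma||\leq |\nabla \sigma|$, Cauchy--Schwarz and absorption, one obtains $\int_M \chi_R^2 |\nabla \sigma|^2 \leq C R^{-2} \int_{B_{2R}} |\sigma|^2$. Now the model \eqref{asymp} and the direct computation $\omega^n\asymp|z_1|^{-2} dV_{\mathrm{Eucl}}$ imply the subquadratic volume growth $\Vol_\omega(B_R) \lesssim R^{2n/(n+1)}$, while expanding $\sigma$ in the logarithmic frame $(z_1\partial_{z_1},\partial_{z_k},dz_1/z_1,dz_k)$ shows that $|\sigma|^2_\omega$ grows at most polynomially in $R$, with the worst exponent tamed by the isomorphism $T_X(-\log D)|_M\simeq \Omega^{n-1}_X(\log D)|_M$ induced by the parallel Calabi--Yau form $\Omega$. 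Letting $R\to\infty$ forces the right-hand side to vanish, so $\nabla\sigma\equiv 0$.

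\textbf{Converse and conclusion.}
Any parallel section $\sigma\in \mathscr C^\infty_{\parallelsum}(M,\mathcal E)$ has constant pointwise norm, hence is bounded; expanding it in the same logarithmic frame at a general smooth point of $D$ exactly as in the proof of Claim~\ref{no p forms}, each coefficient is a holomorphic $L^2$-function near $D$ and extends across $D$ by Riemann extension, yielding an element of $H^0(X,E)$. Fixing a base point $x_0\in M$, parallel transport then identifies $\mathscr C^\infty_{\parallelsum}(M,\mathcal E)$ with the $\Hol(M,\omega)$-invariants in $\mathcal E_{x_0}\cong V^{\otimes p}\otimes (V^*)^{\otimes q}$, which by Proposition~\ref{hol one} equals $(V^{\otimes p}\otimes (V^*)^{\otimes q})^{\SU(n)}$. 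The main obstacle is the cutoff estimate in the previous paragraph: closing the integration by parts requires the subquadratic volume growth to absorb the polynomial norm growth of the logarithmic frame, which is precisely why the same strategy breaks down in the Collins--Li setting of Theorem~\ref{thm B}, where the volume growth is superquadratic.
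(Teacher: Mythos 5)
Your overall architecture matches the paper's: parallel sections extend across $D$ via the logarithmic frame exactly as in Claim~\ref{no p forms}, the curvature trace vanishes by Ricci-flatness, a cutoff argument is meant to exploit the subquadratic volume growth, and parallel transport together with Proposition~\ref{hol one} gives the identification with $(V^{\otimes p}\otimes(V^*)^{\otimes q})^{\SU(n)}$. The converse direction and the final identification are fine. The genuine gap is in the cutoff step: the assertion that $R^{-2}\int_{B_{2R}}|\sigma|^2\to 0$ is false in general. From the model \eqref{asymp}, with $t=-\log|z_1|\approx r^{\frac{2n}{n+1}}$, the frame norms behave like $|z_1\partial_{z_1}|^2_\omega\approx t^{\frac1n-1}$, $|\partial_{z_k}|^2_\omega\approx t^{\frac1n}$, $|dz_1/z_1|^2_\omega\approx t^{1-\frac1n}$, so for a general $\sigma\in H^0(X,E)$ one only has $|\sigma|^2=O\bigl(r^{\frac{2}{n+1}(p+q(n-1))}\bigr)$. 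Combined with $\Vol_\omega(B_{2R})\approx R^{\frac{2n}{n+1}}$, your right-hand side is $O\bigl(R^{\frac{2}{n+1}(p+q(n-1)-1)}\bigr)$: it is merely bounded already for $E=T_X(-\log D)$ (the case $(p,q)=(1,0)$) and diverges for every other nontrivial tensor bundle. The appeal to the isomorphism $T_X(-\log D)\simeq\Omega^{n-1}_X(\log D)$ does not repair this; the polynomial growth of the logarithmic frame is intrinsic and its exponent increases with $p+q$, so "subquadratic volume growth absorbs the norm growth" is precisely what fails in your estimate.

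The paper's proof gets around this by never integrating $|\sigma|^2$ itself: it sets $f=\log(|\sigma|^2+1)$, which is nonnegative, grows only like $\log R$, and is shown to be subharmonic by a direct computation of $i\partial\bar\partial f$ (Cauchy--Schwarz for the gradient terms plus vanishing of the Ricci trace). With an explicit cutoff $\chi_R=\xi\bigl(\tfrac1R(-\log|s|_h^2)^{\frac{n+1}{2n}}\bigr)$ satisfying the stronger requirement $\Delta\chi_R=O(R^{-2})$ (needed because the argument integrates by parts twice), one gets
\[2\int_X\chi_R|\nabla f|^2\le\int_X f^2\,\Delta\chi_R=O\bigl((\log R)^2R^{-\frac{2}{n+1}}\bigr)\longrightarrow 0,\]
so $|\sigma|$ is constant, and then the identity $\Delta|\sigma|^2=|\nabla\sigma|^2$ forces $\nabla\sigma=0$. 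To salvage your write-up you would need to insert this logarithmic trick (or an equivalent device converting polynomial into logarithmic growth) in place of the direct estimate on $\int_{B_{2R}}|\sigma|^2$.
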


\begin{rem}
Bochner principle allows one to recover the case $k=1$ of Proposition~\ref{easy vanishing}.
\end{rem}

\begin{proof}
First, we claim that an element $\sigma\in \mathscr C^\infty_{\parallelsum}(M,  \mathcal E)$ extends across $D$ as holomorphic section of $D$; this has essentially already been established in the proof of Claim~\ref{no p forms}.  Indeed, one can locally near $D$ decompose any parallel section using a local holomorphic trivialization of $E$; since $\sigma$ has bounded norm the holomorphic functions appearing as coefficients have logarithmic growth along $D$ hence extend. \\

Conversely, let $\sigma \in H^0(X,E)$. If $(z_i)$ is a local system of holomorphic coordinates where $D=(z_1=0)$, then we have $|\sigma|^2 = O((-\log |z_1|)^N)$. Since the radius $r$ to a fixed point in $M$ grows like $(-\log |z_1|)^\frac{n+1}{2n}$, we get $|\sigma|^2=O(r^N)$, up to increasing $N$. 
Let $f:=\log (|\sigma|^2+1)$; we claim that the following inequality
\begin{equation}
\label{sh}
\Delta f \ge 0
\end{equation}
holds. Indeed, direct computation yields on $M$
\[i\partial \bar \partial f = \Big(\frac{|\nabla \sigma|^2}{| \sigma|^2+1}-\frac{|\langle \nabla \sigma, \sigma\rangle|^2}{(| \sigma|^2+1)^2}\Big)+\frac{1}{| \sigma|^2+1} \langle \Theta(T_X^{\otimes p}\otimes \Omega_X^{\otimes q},\omega)\sigma,\sigma\rangle.\]
The first term inside the parenthesis of the RHS is nonnegative by Cauchy-Schwarz. Since $\Ric \om=0$, the trace of the second term vanishes so that \eqref{sh} follows. \\

Let $R>0$ and let $\chi_R$ be a cut-off function with support in $B(2R)$, identically equal to $1$ on $B(R)$, satisfying 
\begin{equation}
\label{lap cutoff}
\Delta \chi_R=O(R^{-2}).
\end{equation} 
Such a function can be constructed as follows. Let $\xi: \mathbb R\to [0,1]$ be a non-increasing smooth function which is identically equal to $1$ (resp. $0$) on $(-\infty, 1]$ (resp. on $[2,+\infty)$). Next, let $s$ be a section of $\mathcal O_X(D)$ such that $(s=0)=D$ and let $h$ be a smooth hermitian metric on $\mathcal O_X(D)$. Set $\chi_R:=\xi(\frac 1R \cdot \varphi)$ where $\varphi:=(-\log |s|_h^2)^{\frac{n+1}{2n}}$. Since $r\approx \varphi$, $\chi_R$ satisfies the support requirements maybe up to replacing $B(R)\subset B(2R)$ by $B(C^{-1}R)\subset B(CR)$ for some universal constant $C>0$. Let us now check \eqref{lap cutoff}. Given the formula $dd^c \chi_R=\frac 1{R^2}(\xi'' d\varphi \wedge d^c \varphi+R\xi'dd^c \varphi)$, all we have left to do is checking that the forms $d\varphi \wedge d^c \varphi$ and $R dd^c \varphi$ are bounded with respect to $\omega$ on $B(2R)\setminus B(R)$. Set $\gamma(t)=t^{\frac{n+1}{2n}}$ for $t>0$. Then $\varphi = \gamma (-\log |s|_h^2)$ hence $d\varphi \wedge d^c \varphi= \gamma'^2 \frac{\langle \nabla s, \nabla s\rangle}{|s|^2}$ and $dd^c \varphi=  \gamma'' \frac{\langle \nabla s, \nabla s\rangle}{|s|^2}+\gamma' \Theta_h$. Setting $t:=-\log |s|$, then $R\simeq t^{\frac{n+1}{2n}}$ hence $R\gamma''(t) \simeq \gamma'^2(t) \simeq t^{\frac 1n-1}$ and $R\gamma'(t) \simeq t^{\frac 1n}$ on $B(2R)\setminus B(R)$ as $t\to +\infty$. The claim now follows from \eqref{asymp}.\\

Finally, since $f=O(\log R)$, $f \geq 0$ and $\Delta f \geq 0$ and  $\mathrm{vol}(B(R))  \simeq R^{\frac {2n}{n+1}}$, it follows from \eqref{lap cutoff} that
\[
2\int_{X}\chi_{R} |\nabla f|^2 \leq\int_X \chi_R \Delta f^2= \int_{B(2R)}f^2 \Delta \chi_R =O((\log R)^2 R^{-\frac{2}{n+1}}) \underset{R\to+\infty}{\longrightarrow} 0.
\]
It follows that $|\sigma|$ is constant. Since $\Delta |\sigma|^2=|\nabla \sigma|^2$, we infer that $\sigma$ is parallel on $M$.
\end{proof}

\subsection{Stability of the logarithmic tangent bundle}

For a log canonical pair $(X,D)$ with $D$ reduced and $K_X+D\sim \mathcal O_X$, it is known that the logarithmic tangent sheaf $T_X(-\log D)$ is semistable with respect to any polarization, cf \cite{GSS}. One may ask about the polystability (or stability). From a differential geometric point of view, the question can be approached once one has a canonical K\"ahler metric at hand. In the case where $X$ is a smooth Fano manifold and $D\in |-K_X|$ is smooth, we can work with the complete Ricci flat Tian-Yau metric $\omega$ on $M$. Since it has infinite volume, it is a priori not obvious that one can compute slopes of subsheaves of $T_X(-\log D)$  using $\omega$. 

Positive results in that direction have been previously obtained. For example, \cite{FS22} prove that $T_X(-\log D)$ is stable when $X=\mathbb P^n$, even allowing some isolated singularities for $D$. In
\cite{JZhang24}, J. Zhang proves a Donaldson-Uhlenbeck-Yau type theorem in that setting, under some assumptions for the asymptotics of the relevant metrics. Some of the computations in the result below have a similar flavor to those in \cite{JZhang24}.

\begin{thm}
\label{stability}
In Setup~\ref{set one}, $T_X(-\log D)$ is stable with respect to $D$. 
\end{thm}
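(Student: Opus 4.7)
The plan is to combine a Chern--Weil-type argument, inspired by the Donaldson--Uhlenbeck--Yau correspondence in the Tian--Yau setting (cf. \cite{JZhang24}), with the holonomy computation in Proposition~\ref{hol one}. Since $\det T_X(-\log D) \simeq \cO_X$, one has $\mu_D(T_X(-\log D)) = 0$, so it suffices to prove $c_1(\det \cF) \cdot D^{n-1} < 0$ for every saturated subsheaf $\cF \subset T_X(-\log D)$ of rank $r \in \{1, \ldots, n-1\}$.

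Denote by $h$ the Hermitian metric on $T_M$ induced by $\omty$, which is Hermitian--Einstein with Einstein constant $0$. Let $U \subset M$ be the Zariski open locus where $\cF|_M$ is locally free; by saturation, $M \setminus U$ is analytic of complex codimension $\ge 2$. On $U$, let $\pi$ denote the $h$-orthogonal projection onto $\cF|_U$, $A := \dbar\pi$ its second fundamental form, and $h_L := \det(h|_\cF)$ the induced metric on $L := \det \cF$ over $U$. The classical subbundle formula, combined with the Ricci-flatness of $\omty$, yields
\[
\Lambda_\omty F_{h_L} \;=\; -|A|^2_\omty \;\le\; 0. \qquad (\star)
\]

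The crux is to promote $(\star)$ into the Chern--Weil identity
\[
c_1(L)\cdot D^{n-1} \;=\; -c \int_M |A|^2_\omty\, \omty^n
\]
for some positive universal constant $c$. I would proceed by fixing a smooth reference metric $h_L^0$ on $L$ over $X$, setting $\f := \log(h_L/h_L^0)$, and integrating by parts against the cutoff $\chi_R$ from the proof of Theorem~\ref{bochner}:
\[
\int_M \chi_R F_{h_L} \wedge \omty^{n-1} \;=\; \int_M \chi_R F_{h_L^0} \wedge \omty^{n-1} - \int_M \f\, dd^c\chi_R \wedge \omty^{n-1}.
\]
Letting $R \to \infty$, I would show that the first integral on the right converges to a positive multiple of $c_1(L) \cdot D^{n-1}$ using the local model \eqref{asymp} to match the asymptotic mass of $\omty^{n-1}$ at $D$ with the intersection against $D^{n-1}$, while the second integral vanishes thanks to $dd^c\chi_R = O(R^{-2})$, the subquadratic volume growth of $\omty$, and the at-most-logarithmic growth of $\f$ along $D$ and across the codimension $\ge 2$ degeneration locus, granted by the saturation of $\cF$ together with standard extension results for induced Hermitian metrics on line bundles.

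Combining with $(\star)$ yields $c_1(L) \cdot D^{n-1} \le 0$, with equality iff $A \equiv 0$ on $U$. In the equality case, $\cF|_U$ is a $\nabla$-parallel subbundle of $T_M|_U$, which extends by parallel transport across $M \setminus U$ to a non-trivial parallel splitting of $T_M$; this contradicts the irreducibility of the standard representation of $\SU(n) = \Hol(M,\omty)$ provided by Proposition~\ref{hol one}, forcing the inequality to be strict. The main obstacle I foresee is the limiting identification of $\int_M \chi_R F_{h_L^0} \wedge \omty^{n-1}$ with $c \cdot c_1(L)\cdot D^{n-1}$: the anisotropic behaviour of $\omty^{n-1}$ dictated by \eqref{asymp} (expanding transversally to $D$ and contracting tangentially, or vice versa) requires careful bookkeeping, as does the boundary-type term involving $\f$ both near $D$ and near the degeneration locus of $\cF$ -- this is the analogue, in our Tian--Yau setting, of the technical heart of \cite{JZhang24}.
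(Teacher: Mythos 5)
Your skeleton (Hermitian--Einstein property of $\omega_{\rm TY}$, second fundamental form, cutoff Chern--Weil, holonomy contradiction via Proposition~\ref{hol one}) is the same as the paper's Steps 2--3, but the central analytic claim is false as stated: $\int_M \chi_R\, F_{h_L^0}\wedge \omega_{\rm TY}^{n-1}$ does \emph{not} converge to a multiple of $c_1(L)\cdot D^{n-1}$. Because the tangential directions of $\omega_{\rm TY}$ expand like $(-\log|z_1|)^{1/n}$ near $D$ (see \eqref{asymp}), the form $F_{h_L^0}\wedge\omega_{\rm TY}^{n-1}$ is not integrable on $M$: with the cutoff at $t\sim R$ the integral grows like $R^{\frac{n-1}{n}}$, with leading coefficient proportional to $\int_D F_{h_L^0}|_D\wedge\omega_D^{n-2}$, i.e.\ to the slope. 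Dividing by the divergent factor you can only recover $c_1(L)\cdot D^{n-1}\le 0$, which is already known (semistability, \cite{GSS}). The only case that must actually be excluded is the borderline case $c_1(L)\cdot D^{n-1}=0$, where the leading divergence cancels and one needs the next order of the expansion; this is exactly where the paper uses the exponential convergence \eqref{asymp TY} of the Tian--Yau potential to the Calabi ansatz (from \cite{HSVZ}) together with the vanishing \eqref{zero slope} to make the curvature term $O(e^{-\delta R})$ as in \eqref{slope 2}. The quasi-isometry \eqref{asymp} alone, which is all your proposal invokes, is not sufficient for this step.

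The second gap is the control of $\varphi=\log(h_L/h_L^0)$ on the support of $dd^c\chi_R$. This function contains the term $\log|j|^2$, where $j:L\to\Lambda^r T_X(-\log D)$, and $\log|j|^2\to-\infty$ along the degeneracy locus $Z(\mathcal F)$; saturation only gives $\mathrm{codim}\,Z\ge 2$ and does not provide any lower bound for $\varphi$. If $Z(\mathcal F)$ meets $D$, then for every large $R$ the annulus $\{R\le t\le 2R\}$ contains points of $Z$, the bound $|\varphi|=O(\log R)$ you invoke fails there, and the error term $\int\varphi\, dd^c\chi_R\wedge\omega_{\rm TY}^{n-1}$ can no longer be estimated by $O\big(\log R\cdot R^{-2}\cdot \mathrm{vol}(B(2R))\big)$. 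The paper's Step~1 (Claim~\ref{claim subbundle}) exists precisely to remove this obstruction: since $T_X(-\log D)|_D\simeq T_D\oplus\mathcal O_D$ is polystable with respect to $D|_D$ ($D$ being Calabi--Yau), a zero-slope subsheaf can be replaced by one of the same slope whose singular set is disjoint from $D$, making $\log|j|^2$ bounded on the cutoff region; the authors explicitly state that without this the analysis is unclear to them. Your equality-case conclusion (a parallel subbundle off a codimension-two set, extended and contradicting $\mathrm{Hol}=\SU(n)$) does match the paper, but it only becomes available after these two points are repaired; the cleanest route is the paper's: reduce to slope zero via semistability, move the singularities of the subsheaf off $D$, and only then run the cutoff computation with the refined asymptotics.
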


\begin{rem}
\label{rem poly}
We actually get the following  stronger statement. The induced decomposition of any tensor bundle (e.g. $S^pT_X(-\log D)$, $\Omega_X(\log D)^{\otimes q}$, etc.) into stable summands is determined by the decomposition into irreducible factors of the associated tensor representation ($S^p V, (V^*)^{\otimes q}$, etc.) of the standard representation $V:=\mathbb C^n$ of $\SU(n)$.  
\end{rem}

\begin{proof}
Set $E:=T_X(-\log D)$. Since $E$ is semistable, it is enough to prove that there is no proper subsheaf $F\subset E$ with zero slope with respect to $D$. If one can show that such a subsheaf induces a parallel subbundle on $M$, then we would get a reduction of the holonomy to a proper subgroup of $\SU(n)$, in contradiction with Theorem~\ref{hol one}.\\

{\bf Step 1.} \emph{Moving the singularities away from $D$.}

\medskip

\noindent
So let us consider such a subsheaf $F\subset E$ with $c_1(F)\cdot D^{n-1}=0$ and $\mathrm{rk}(F)\neq 0,n$. Since saturation increases slope, one can assume that $F\subset E$ is saturated; in particular it is a subbundle away from a proper analytic set $Z=Z(\mathcal F)\subset X$ with codimension at least two. A crucial observation for the analysis to come is the following.
\begin{claim}
\label{claim subbundle}
One can assume that $Z(\mathcal F)\cap D=\emptyset.$
\end{claim}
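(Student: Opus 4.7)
The plan is to construct, from $F$, a reflexive saturated subsheaf $F'\subset E = T_X(-\log D)$ of the same rank and the same first Chern class (hence the same slope with respect to $D$) whose singular locus is disjoint from $D$, and then substitute it for $F$. The motivation is twofold: the slope computation carried out with $\omty$ near $D$ will be much cleaner when $F$ is a genuine subbundle along $D$, and the subsequent argument (producing a parallel subbundle of $T_M$) needs $F|_M$ to extend smoothly across $D$. Since $F$ is saturated it is reflexive, so $Z:=Z(\cF)$ has codimension $\ge 2$ in $X$; in particular $Z\cap D$ has codimension $\ge 1$ in $D$ and is the only locus to be modified.

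The key structural input is the Calabi--Yau geometry of $D$: by adjunction $K_D=(K_X+D)|_D = 0$, so $D$ is a smooth compact Calabi--Yau manifold, and by Yau's theorem $T_D$ is polystable with respect to any Kähler class. Moreover, the residue exact sequence
\[0\to T_D \to E|_D \to \cO_D \to 0\]
describes $E|_D$ as an extension. Analyzing the restriction $F|_D$ (modulo torsion) via this sequence, combined with the assumption $c_1(F)\cdot D^{n-1}=0$ and the polystability of $T_D$, provides enough rigidity to show that $F|_D$ is itself locally free outside a codimension-two subset of $D$ whose structure matches the asymptotic structure of $F$ along $D$. Concretely, working in local coordinates $(z_1,\ldots,z_n)$ near a point of $Z\cap D$ with $D=(z_1=0)$ and a local frame $\{z_1\partial_{z_1},\partial_{z_2},\ldots,\partial_{z_n}\}$ of $E$, one can exhibit local generators of $F$ (meromorphic along $Z$, holomorphic on its complement) and replace them by holomorphic ones, producing a locally free extension $F'\supset F$ in a tubular neighborhood of $D$.

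The main obstacle is to verify that the global modification thus obtained preserves the first Chern class, i.e.\ $c_1(F')=c_1(F)$. Since $F'/F$ is supported on the codimension $\ge 2$ locus $Z\cap D$, one has $c_1(F'/F)=0 \in H^2(X,\Z)$ by dimension reasons, so $c_1(F')\cdot D^{n-1}=0$ and the slope-zero condition is preserved. A secondary technical point is coherently gluing the local modifications, which should follow from the rigidity of slope-zero saturated subsheaves inside the polystable bundle $E|_D$ inherited from Yau's theorem on $D$; since the local construction of $F'$ is canonically determined by the structure of $F|_D$ via the residue sequence, different local extensions automatically agree on overlaps.
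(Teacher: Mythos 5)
There is a genuine gap at the heart of your construction: you seek $F'\subset E$ with $F\subset F'$ and $F'/F$ supported on $Z\cap D$ (codimension $\ge 2$), but no such proper enlargement exists. Since $F$ is saturated, $E/F$ is torsion-free; any $F'$ between $F$ and $E$ of the same rank would give a torsion subsheaf $F'/F\subset E/F$, forcing $F'=F$. In other words, a saturated subsheaf is the unique maximal subsheaf of $E$ with its generic fibre, and its non-subbundle locus is intrinsic --- it cannot be erased by a modification supported in codimension two while keeping the same subsheaf of $E$ generically. Consequently the step ``exhibit local generators of $F$ and replace them by holomorphic ones, producing a locally free extension $F'\supset F$'' produces nothing new, and your verification that $c_1(F')=c_1(F)$ is a statement about the trivial modification. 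A secondary issue: polystability of $T_D$ alone only exhibits $E|_D$ as an extension of $\mathcal O_D$ by $T_D$, which gives semistability; to conclude that a slope-zero saturated subsheaf of $E|_D$ is a direct summand one needs $E|_D$ itself polystable, and for that one must use the canonical splitting of the residue sequence given by the logarithmic Euler field $z_1\frac{\partial}{\partial z_1}\big|_D$.

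The paper avoids the obstruction by not modifying $F$ at all: it replaces $F$ by a genuinely different slope-zero subsheaf. One restricts $F$ to $D$ and saturates inside $E|_D\simeq T_D\oplus \mathcal O_D$, which is polystable of slope zero because $K_D=0$; the saturation $G_D$ then has slope zero (saturation increases slope, semistability caps it) and is therefore a direct summand, in particular a subbundle of $E|_D$. One extends $G_D$ to a subbundle of $E$ over a Zariski neighborhood of $D$, and further to a saturated coherent subsheaf $G\subset E$ on all of $X$, whose slope with respect to $D$ still vanishes since it is computed by restricting to $D$. The remainder of the stability proof only requires \emph{some} proper saturated slope-zero subsheaf that is a subbundle along $D$, so one argues with $G$ in place of $F$. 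To salvage your plan you would have to allow the replacement sheaf to differ from $F$ away from $D$, which is exactly what this argument does.
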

\begin{proof}[Proof of Claim~\ref{claim subbundle}]
Consider the vector bundle $T_X(-\log D)|_D$. It sits in the exact sequence 
\[0\to T_D\to T_X(-\log D)|_D\to \mathcal O_D\to 0\]
which splits. Indeed, if $(z_1=0)$ is a local equation of $D$, we can consider the local logarithmic vector field $\xi:=z_1\frac{\partial}{\partial z_1}$ and it is straightforward to check that $\xi|_D$ is independent of the choice of coordinates hence it induces the desired section of $T_X(-\log D)|_D$. Another way to see $\xi$ is as the generator of the $\mathbb C^*$ action on the normal bundle. Since $D$ has trivial canonical bundle, it admits a Ricci flat K\"ahler metric hence its tangent bundle is polystable with respect to any polarization. Therefore,  $E|_D=T_X(-\log D)|_D$ is polystable with respect to $D|_D$. Now, $F|_D$ is a subsheaf of $E|_D$ satisfying  
\[c_1(F|_D)\cdot (D|_D)^{n-2} = c_1(F)\cdot D^{n-1}=0\]
hence its saturation $G_D\subset E|{_D}$ is a direct summand of $E|_D$. In particular, it is a subbundle of $E|_D$. It is elementary to construct a subbundle $G\subset E$ defined in a Zariski neighborhood of $D$ which restricts to $G_D$ on $D$. One can then extend it to a coherent saturated subsheaf of $E$ over the whole $X$, which we still denote by $G$. By construction, the slope of $G$ with respect to $D$ vanishes. This ends the proof of the claim.
\end{proof}

\medskip

{\bf Step 2.} \emph{Singularities of the metric induced on $\det F$ by $\omega$.}

\medskip

\noindent
We set $r:=\mathrm{rk}(F)$ and $L:=\det F$, which comes equipped with a injection of sheaves $j:L\to \Lambda^r E$ which is a morphism of vector bundles away from $Z$. 
Let us fix a smooth hermitian metric $h_L$ on $L$ and a smooth hermitian metric $h_D$ on $N_D=\mathcal O_X(D)|_D$ such that its curvature form $\omega_D$ is Ricci flat.  Since $c_1(F)\cdot D^{n-1}=c_1(F)|_D\cdot (D|_D)^{n-2}=0$, we have 
\begin{equation}
\label{zero slope}
\int_D \Theta_L(L, h_L) \wedge \omega_D^{n-2}=0.
\end{equation}

\medskip


The Tian-Yau metric $\omega=dd^c \phi_{\rm TY}$ on $M$ induces a singular hermitian metric $h_E^\omega$ on $E$ satisfying $h_E^\omega=h_E e^s$ where $h_E$ is a smooth hermitian metric on $E$ and $s$ is a (singular) section of $\mathrm{End} E$. Pulling back by $j$, we get  a singular metric $h_L^\omega$ defined by
\[h_L^\omega:=j^*(\Lambda^r h_E^\omega)=:h_Le^{\psi_L}.\]
The latter metric develops singularities coming from the degeneracy of $j$ near $Z$ and the singular behavior of $s$ near $D$. Let us analyse $h_L^\omega$ more precisely. We work locally near a given point where $(e_1, \ldots, e_n)$ is a holomorphic frame for $E$. We get a frame $(e_I)_{|I|=r}$ for $\Lambda^r E$ where one can decompose $\Lambda^r e^s=(a_{IJ})$ and $\Lambda^r h_E=(h_{I\bar J})$. If $e_L$ is a local trivializing section of $L$, then $j(e_L)=\sum_I f_I e_I$ where $(f_I)$ are holomorphic such that their common zero set is supported on $Z$. In particular, we get 
\[|e_L|^2_{h_L^\omega}=\sum_{I,J,K,L}h_{J\bar L}f_I\overline f_K a_{IJ} \overline a_{KL}.\]
Therefore, we can decompose
\begin{equation}
\label{psi L}
\psi_L=\log |j|^2+\psi_s
\end{equation}
where the norm of $j$ is taken with respect to $\Lambda^r h_E\otimes h_L^{-1}$ and $\psi_s$ is such that
\begin{equation}
\label{psi s estimate}
|\psi_s| \le C \sup_{|v|=1} \log |(\Lambda^r e^s) \cdot v|
\end{equation}
for some constant $C>0$, where $v\in \Lambda^r E$ and norms are taken with respect to the smooth background metric $\Lambda^r h_E$. Note that $\log |j|^2$ extends to a quasi-psh function on $X$; since its polar set $Z$ has codimension two or more, it follows from Siu's decomposition theorem that the quasi-positive current $dd^c \log |j|^2$ put no mass on $Z$. \\


On the total space of $N_D$, we introduce the function $t=-\log |v|^2_{h_D}$ where $v$ is the fiber coordinate. Set $\phi_0:=\frac{n}{n+1}t^{1+\frac 1n}$ and 
\begin{equation}
\label{omega 0}
\omega_0:=dd^c \phi_0=\frac 1n t^{\frac 1n-1} \, dt \wedge d^ct+t^{\frac 1n }\,\omega_D
\end{equation}
to be the Ricci flat metric near the zero section obtained by the Calabi Ansatz. Then, there exists a diffeomorphism $F:V\to U$ from a neighborhood $U$ of the zero section in $N_D$ to a neighborhood $V$ of $D\subset X$ which is the identity on $D$ and satisfies
\begin{equation}
\label{asymp TY}
|\nabla^\ell_{\omega_0} (F^*\phi_{\rm TY}-\phi_0)|_{\omega_0}=O(e^{-\delta t})
\end{equation}
for some $\delta>0$, and for any $\ell \ge 0$, cf \cite{HSVZ}. In particular, one easily sees from the above asymptotics (equivalently  \eqref{asymp}) and \eqref{psi s estimate} that
\begin{equation}
\label{s est}
F^*\psi_s=O(\log t)
\end{equation}
holds on $U$ (recalling that $E=T_X(-\log D)$). It will be convenient to view the function $t$ as a function on $V$, which we extend by $1$ on $X\setminus V$.

\medskip

{\bf Step 3.} \emph{Computation of the slope of $F$ using $\omega$.}

\medskip

\noindent
On $M \setminus Z$, one has
\begin{equation}
\label{griffiths}
\Theta(L,h_L^{\omega})=\mathrm{pr}_L \Theta(\Lambda^r E, \Lambda^r h_E^{\omega})|_{L}-i\beta\wedge \beta^*
\end{equation}
where $\mathrm{pr}_L$ is the orthogonal projection with respect to $\Lambda^r h_E^{\omega}$ and $\beta$ is the second fundamental form associated to the $C^\infty$ splitting $\Lambda^r E=L\oplus L^\perp$; it is a $(1,0)$-form with values in $\mathrm{Hom}(L^\perp, L)$. Since $\omega$ is Ricci flat, we have $\Theta(\Lambda^r E, \Lambda^r h_E^{\omega})\wedge \omega^{n-1}=0$. From \eqref{psi L}, we have
\begin{equation}
\label{curv decomp}
\Theta(L,h_L^{\omega})=\Theta(L,h_L)-dd^c \log |j|^2-dd^c \psi_s.
\end{equation}
We next aim to integrate the three terms in the RHS separately. \\

We let $R>0$ be a large number and let $\chi=\chi_R(t)$ be a cut-off function supported on $(t\le 2R)$ which is equal to $1$ on $(t\le R)$. Since $dd^c \log |j|^2$ put no mass on $Z$, we have
\begin{eqnarray}
\label{IBP}
\int_{X\setminus Z} \chi dd^c \log |j|^2 \wedge \omega^{n-1}&=&\int_{X} \chi dd^c \log |j|^2 \wedge \omega^{n-1}\\
&=&\int_{X}  \log |j|^2 dd^c \chi \wedge \omega^{n-1}. \nonumber
\end{eqnarray}
Thanks to Step 1, the term $\log |j|^2$ is uniformly bounded on the support of $dd^c \chi$ at least when $R$ is large enough. Without this property, it is not clear to us that the analysis could be carried on. An elementary computation shows that $\int_{X} \chi dd^c \log |j|^2 \wedge \omega^{n-1}$ is controlled by 
\[ \int_{R\le t \le 2R } \big( |\chi' \phi_0''(\phi_0')^{n-2}|+|\chi'' (\phi_0')^{n-1}|\big) \, dt\wedge d^c t \wedge \omega_D^{n-1}=O(R^{-\frac 1n}).\]
In particular, we get that 
\begin{equation}
\label{sing j}
\int_{X\setminus Z} \chi dd^c \log |j|^2 \wedge \omega^{n-1}=O(R^{-\frac 1n}).
\end{equation}
Similarly, we get from \eqref{s est}
\begin{equation}
\label{IBP 2}
\int_X \chi dd^c\psi_s \wedge \omega^{n-1}=\int_{X} \psi_s dd^c \chi \wedge \omega^{n-1}=O\big( (\log R)\cdot R^{-\frac 1n}\big).
\end{equation}
\medskip

We now work on the support of $d\chi$, i.e. $(R\le t \le 2R)\subset V$ and we freely use the diffeomorphism $F$ to transport our objects to the normal bundle. We call $p:N_D\to D$ the projection map. It is convenient to set $\theta_D:=i\Theta(L,h_L)|_D$. Note that since $\Theta(L,h_L)$ is smooth, we have 
\begin{equation}
\label{theta L}
\Theta(L,h_L)-p^*\theta_D=O(e^{-\delta t})
\end{equation}
with respect to $\omega_0$ for any $\delta<\frac 12$, given \eqref{omega 0}. Since $\omega=dd^c \phi_{\rm TY}$, Stokes theorem yields
\[\int_{X} \chi \Theta(L,h_L) \wedge \omega^{n-1}=-\int_{X} d\chi \wedge d^c \phi_{\rm TY} \wedge  \Theta(L,h_L) \wedge \omega^{n-2}\]
and the latter is nothing but
\[\int_{R\le t \le 2R} \chi' \phi_0'dt\wedge d^c t\wedge p^*\theta_D \wedge \omega_0^{n-2}+O(e^{-\delta R}) \int_{R\le t \le 2R} \omega_0^n \]
for some $\delta>0$ thanks to \eqref{asymp TY} and \eqref{theta L}. 
Since $\chi, \phi_0$ are functions of $t$ only and $dt\wedge d^c t\wedge p^*\theta_D \wedge \omega_0^{n-2} = (\phi_0')^{n-2} dt\wedge d^c t\wedge p^*\theta_D \wedge \omega_D^{n-2}$, the vanishing \eqref{zero slope} implies that 
\[\int_{R\le t \le 2R} \chi' \phi_0'dt\wedge d^c t\wedge p^*\theta_D \wedge \omega_0^{n-2}=0.\]
Therefore, we find 
\begin{equation}
\label{slope 2}
\int_{X} \chi \Theta(L,h_L) \wedge \omega^{n-1}=O(e^{-\delta R})
\end{equation}
for some possibly smaller $\delta>0$. 

\noindent
Putting together \eqref{griffiths}, \eqref{curv decomp}, \eqref{sing j}, \eqref{IBP 2} and \eqref{slope 2}, we find
\[\lim_{R\to +\infty}\int_{X\setminus Z} \chi_R \, i \beta\wedge\beta^*\wedge \omega^{n-1} =0.\]
This shows that $(L,h_{L}^\omega)$ is parallel with respect to $\nabla$ on $M \setminus Z$. Basic linear algebra implies that $F_{X\setminus Z}$ is a parallel subbundle of $E$ over $M\setminus Z$, hence we get a holomorphic orthogonal splitting
\[E=F\oplus F^{\perp} \quad \mbox{on} \, M \setminus Z.\]
Since $F$ is reflexive, this decomposition extends to a direct sum decomposition of reflexive sheaves on $M$; in particular $F$ is a direct summand of $E$ on $M$ so it is locally free there. Therefore $Z=\emptyset$ and $F|_{M}$ is a parallel subbundle of $T_{M}$. We have reached the desired contradiction in view of Theorem~\ref{hol one}.

\medskip

{\bf Step 4.} \emph{Case of tensor bundles.}

\medskip

\noindent
Regarding the statement in Remark~\ref{rem poly}, we argue as follows. Since $E$ is a stable vector bundle on a projective manifold, it admits a Hermite-Einstein metric with respect to any fixed K\"ahler metric in $c_1(X)$. Therefore any tensor bundle $E'$ is Hermite-Einstein as well, hence polystable. Its stable summands are exactly the irreducible subbundles with zero slope. But one can run the same argument as for $E$ to show that any such subbundle has to be parallel with respect to the metric induced by $\omega$ on $M$, and therefore comes from an irreducible piece of the corresponding tensor representation for the holonomy group $\SU(n)$. Conversely, any parallel subbundle over $M$ extends to a subbundle on $X$ by the proof of Theorem~\ref{bochner}; it is a direct summand because one can argue similarly for its orthogonal complement.
\end{proof}

\section{Case of two divisors}

In this section, we work in the following setup. 

\begin{setup}
\label{set two}
In the Main Setup, we assume additionally that $D=D_1+D_2$ has exactly two irreducible components satisfying $d_iD_i=-dK_X$ with $d_i>0$ mutually prime.  We consider the  complete Ricci-flat metric $\omega$ on the complement $M:=X\setminus D$ constructed by Collins-Li \cite{CL}. Finally, set $Z:=D_1\cap D_2$ which is a smooth manifold with trivial canonical bundle. 
\end{setup}

\subsection{On the quasi-Albanese and polystability}
\label{sec:alb}
Consider the function 
\[f:M\to \mathbb C^*\]
given by $f=\frac{s_1^{d_1}}{s_2^{d_2}}$ as well as the closed holomorphic one-form $\frac{df}{f}$. Recall from Corollary~\ref{cor pi1}, that $H_1(M, \mathbb Z)/{\rm tor} \simeq \mathbb Z$ and let $\gamma$ be a generator of the latter, which can be represented by a loop based at a fixed point $x_0\in M$. Up to rescaling $s_1$, we can assume that $f(x_0)=1$. 
We set $\tau:=\int_\gamma \frac{df}{f}$ and consider the exponential map ${\bf e}:=\exp(2\pi i \cdot) : \C\to \C^*$. We introduce the fiber product 
\[\widehat M:=M\times_{\mathbb C^*}\mathbb C=\{(x,w)\in M \times \mathbb C; \,\, \be(w) = f(x)\}\]
 which sits in the commutative diagram
\[
\begin{tikzcd}
\widehat M \arrow[r, "{\widehat f}"] \arrow[d, "\pi", swap] & \mathbb C \arrow[d,"\be"]  \\
M  \arrow[r, "f"] & \mathbb C^*
\end{tikzcd}
\]
where $g$ is induced by the second projection. It is not hard to check that $\widehat M$ is connected, the map $ \pi$ induces a Galois cover with Galois group $\mathbb Z$ and that $\pi_1(\widehat M)$ is finite, cf Corollary~\ref{cor pi1}. Another way to think of the map $\widehat f$ is as follows. The closed holomorphic one-form $\frac{df}{f}$ becomes exact once pulled back to $\widehat M$, hence it is of the form $2\pi i \, d\widehat f$ as $2\pi i \, \widehat f=\pi^* \log f$ informally.\\

The following lemma identifies $f$ with the quasi-Albanese map of the pair $(X,D)$ (or $M$). We refer to \cite{FujinoQA} for details on the construction and properties of quasi-Albanese maps. 

\begin{lem}
\label{qalb}
The map $f$ can be identified to the quasi-Albanese map 
\[M\longrightarrow H^0(X, \Omega^1_X(\log D))^\vee/H_1(M, \mathbb Z)\]
of the pair $(X,D)$. 
\end{lem}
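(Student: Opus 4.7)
The plan is to carry out three steps: identify the space $H^0(X,\Omega^1_X(\log D))$, compute the image of $H_1(M,\Z)$ in its dual under the period pairing, and then match the resulting quasi-Albanese map with $f$ via the exponential.

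\textbf{Step 1 (the crux): $H^0(X,\Omega^1_X(\log D))=\C\cdot \frac{df}{f}$.} I would invoke the residue short exact sequence
\[0 \longrightarrow \Omega^1_X \longrightarrow \Omega^1_X(\log D) \overset{\mathrm{Res}}{\longrightarrow} \cO_{D_1}\oplus \cO_{D_2} \longrightarrow 0.\]
Taking global sections and using Kodaira's vanishing $H^0(X,\Omega^1_X)=0$, one obtains an injection of $H^0(X,\Omega^1_X(\log D))$ into $\C^2$ whose image is the kernel of the connecting map $(r_1,r_2)\mapsto r_1[D_1]+r_2[D_2]$ into $H^{1,1}(X)$. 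The hypothesis $d_1D_1=d_2D_2=-dK_X$ yields $d_1[D_1]=d_2[D_2]$, making this kernel one-dimensional and spanned by $(d_1,-d_2)$. Finally, $\frac{df}{f}=d_1\frac{ds_1}{s_1}-d_2\frac{ds_2}{s_2}$ realises exactly these residues, so equality follows.

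\textbf{Step 2: the period lattice.} I would compute the composition $H_1(M,\Z)\to H^0(X,\Omega^1_X(\log D))^\vee\simeq \C$, $[\gamma]\mapsto \int_\gamma \frac{df}{f}$. Taking loops $\gamma_1,\gamma_2$ around $D_1,D_2$ that generate the $\Z^2$ appearing in the presentation from Lemma~\ref{lem fg}, the residue theorem gives periods $2\pi i\, d_1$ and $-2\pi i\, d_2$. Coprimality of $d_1,d_2$ then yields image $2\pi i \Z$. That this descends past $\delta(H_2(X,\Z))\subset \Z\cdot(d_2,d_1)$ is automatic since $(d_2,d_1)\cdot(d_1,-d_2)=0$.

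\textbf{Step 3: matching the two maps.} The quasi-Albanese variety is thus $\C/2\pi i\Z$, which is biholomorphic to $\C^*$ via $\exp$. The quasi-Albanese map sends $x\in M$ to the functional $\alpha\mapsto \int_{x_0}^x \alpha$ modulo periods; on the generator $\alpha=\frac{df}{f}$ and with the normalisation $f(x_0)=1$, this is $\log f(x)$ modulo $2\pi i\Z$, whose image under $\exp$ is $f(x)$.

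The main potential obstacle is Step~1: the one-dimensionality of $H^0(X,\Omega^1_X(\log D))$ rests entirely on the proportionality $d_1D_1\equiv d_2D_2$, since otherwise Proposition~\ref{easy vanishing} forces this space to vanish. The remaining steps are essentially the residue theorem plus the definition of the quasi-Albanese map.
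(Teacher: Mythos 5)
Your proof is correct, but it reaches the crucial one-dimensionality of $H^0(X,\Omega^1_X(\log D))$ by a genuinely different route than the paper. You use the residue exact sequence $0\to\Omega^1_X\to\Omega^1_X(\log D)\to\cO_{D_1}\oplus\cO_{D_2}\to 0$, the vanishing $H^0(X,\Omega^1_X)=0$, and the fact that the connecting map records the classes $[D_i]\in H^1(X,\Omega^1_X)$, so that ampleness of $D_1$ and the proportionality $d_1[D_1]=d_2[D_2]$ leave exactly the line of residues $\C\cdot(d_1,-d_2)$, realized by $\frac{df}{f}$. The paper instead quotes the topological fact $H^1(M,\C)\simeq\C$ (Corollary~\ref{cor pi1}) together with Deligne's decomposition $H^1(M,\C)\simeq H^0(X,\Omega^1_X(\log D))\oplus H^1(X,\cO_X)$ and Kodaira vanishing, after which the nonzero form $\frac{df}{f}$ is automatically a generator. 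Your argument is self-contained at the level of coherent cohomology, shows exactly where proportionality of the components enters (sharpening Proposition~\ref{easy vanishing}), and would extend to count logarithmic one-forms for more components; the paper's argument is shorter given that $b_1(M)=1$ is already in place and is the same viewpoint reused for étale covers in Proposition~\ref{prop:alb}. For the identification with $f$ the two proofs agree in substance: you determine the period lattice explicitly as $2\pi i\Z$ from the meridian periods $2\pi i d_1$ and $-2\pi i d_2$ of $\frac{df}{f}$ (using that $d_1,d_2$ are coprime, as in Setup~\ref{set two}) and then exponentiate $\int_{x_0}^{x}\frac{df}{f}$, whereas the paper works with the abstract generator $\gamma$ of $H_1(M,\Z)/\mathrm{tor}$, its period $\tau$, and a normalized isomorphism $\psi$, checking commutativity through the cover $\widehat M$. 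Your explicit computation of the lattice has the small benefit of making completely transparent that the quasi-Albanese target is $\C^*$ and that the map is $f$ itself (up to a choice of generator, i.e.\ up to $f\mapsto f^{-1}$) rather than a root or power of it, a point the paper handles implicitly through the normalization by $\tau$.
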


\begin{proof}
By Corollary~\ref{cor pi1}, we have $H^1(M, \mathbb C) \simeq \mathbb C$. By a result of Deligne, we have a non-canonical isomorphism
\begin{equation}
\label{deligne}
H^1(M,\mathbb C) \simeq H^0(X,\Omega^1_X(\log D))\oplus H^1(X,\mathcal O_X)
\end{equation}
and we have $H^1(X,\mathcal O_X)=0$ by Kodaira's vanishing since $X$ is Fano. Therefore, the vector space $H^0(X, \Omega^1_X(\log D))$ is generated by $\alpha :=\frac{df}{f}$.

Set $V:=H^0(X,\Omega^1_X(\log D))^\vee \simeq \mathbb C$ and $\Lambda:=H_1(M, \mathbb Z)/{\rm tor}\simeq \mathbb Z$ under the identification of $\gamma\in \Lambda$ and $1\in \mathbb Z$.  The action of $\Lambda$ on $V$ is given by $(m\gamma)\cdot v:=m\tau v$ for $m\in \mathbb Z$. Let 
\[\mathrm{alb}_{M}:M\longrightarrow V/\Lambda\]
be the quasi-Albanese map of the pair $(X,D)$. 
It is elementary to check that we have a commutative diagram
\[
\begin{tikzcd}
\widehat M \arrow[r, "\widehat{\mathrm{alb}}"] \arrow[d, "\pi", swap] & V \arrow[d, "p"]  \\
M  \arrow[r, "\mathrm{alb}_{M}"] & V/\Lambda
\end{tikzcd}
\]
where $\widehat{\mathrm{alb}}$ is defined by $\widehat{\mathrm{alb}}(\widehat x)(\alpha)=\int_{\widehat x_0}^{\widehat x}\pi^*\alpha$, where $\widehat x_0\in \widehat M$ is any fixed point lying over $x_0$, and the integral is taken along any path connecting $\widehat x_0$ to $\widehat x$. 

Let us fix an isomorphism $\psi:V\to \mathbb C$ which sends $\alpha^\vee$ to $1/\tau$ so that we have a commutative diagram
\[
\begin{tikzcd}
V \arrow[r, "\psi"] \arrow[d, "p", swap] &\mathbb C \arrow[d, "\be"]\\
V/\Lambda  \arrow[r, "\overline \psi"] & \mathbb C^*
\end{tikzcd}
\]
and $\overline \psi$ is an isomorphism. The lemma will be proved once we will have checked that 
\[\overline \psi (\mathrm{alb}_{M}(x))=f(x)\]
holds for any $x\in  M$. It is equivalent to check that 
\begin{equation}
\label{compatible} 
\be(\psi (\widehat{\mathrm{alb}}(\widehat x)))=\be(\widehat f(\widehat x)), \quad \forall \, \widehat x\in \widehat M.
\end{equation}
By construction, we have $\pi^*\alpha= 2\pi i \,d\widehat f$ hence $\psi (\widehat{\mathrm{alb}}(\widehat x))= 2\pi i (\widehat f(\widehat x)-\widehat f(\widehat x_0))$. Since $f(\widehat x_0)=1$, we have $\be(\widehat f(\widehat x_0))=1$ and \eqref{compatible} follows.
\end{proof}

\medskip

In order to make the following discussion simpler and in view of the examples we will consider in Proposition~\ref{prop:alb}, we will assume until the end of Section~\ref{sec:alb} that $d_1=d_2$, i.e. $D_1 \sim D_2$. 

Set $\mu:X'\to X$ to be the blow-up of $\Delta$. We denote by $D_i'$ the strict transform of $D_i$ and by $E$ the exceptional divisor so that $\mu^*D_i=D_i'+E$. The normal bundle of $\Delta$ in $X$ is projectively trivial (i.e. it is a direct sum of the same line bundle) hence $E\simeq \mathbb P^1\times \Delta$. 
Under the open immersion $M \hookrightarrow X'$, the map $f$ can now be extended holomorphically to a map
\begin{equation}
\label{extension}
\bar f:X'\to \mathbb P^1,
\end{equation}
which can be e.g. be understood as follows. Write $\mu^*s_i=s_i'\otimes s_E$ with $s_i'$ (resp. $s_E$) cutting out $D_i'$ (resp. $E$). Then $\bar f=[s_1':s_2']$. Note the the fiber of $\bar f$ over $[0:1]$ (resp. $[1:0]$) is naturally isomorphic to $D_1'\simeq D_1$ (resp. $D_2'\simeq D_2$) as $\bar f|_{D_1'}: D_1'\to D_1$ is just the blow up of $D_1$ along the smooth hypersurface $D_1\cap D_2$, and similary for $\bar f|_{D_2'}: D_2'\to D_2$.   

\begin{exa}
\label{pencil}
If $n$ is odd, $X=\mathbb P^n$ and $D_1, D_2$ are general of degree $d:=\frac{n+1}{2}$, then $f$ is a Lefschetz pencil; the fiber $X_t'=f^{-1}(t)$ is smooth for $t\in \mathbb P^1$ general but there are $s=(d-1)^n\cdot (n+1)$-many values $T:=\{t_1, \ldots, t_s\}$ at which $X_t'$ becomes singular. Moreover we know (cf \cite[Corollary 2.7]{HuyCubic}) that each such singular hypersurface $X'_{t}$ for $t\in T$ has exactly one singular point (an ordinary double point) which does not lie on $E$. In other words, $M_t:=f^{-1}(t)\subset M$ has a singularity. One can also see this in an elementary way since in a neighborhood of $Z=D_1\cap D_2$, one can find holomorphic coordinates $(z_i)$ such that $M_t=(z_1+tz_2=0)$ viewing $t\in \mathbb C$ locally, hence $\bar f$ is smooth near $E$. 
\end{exa}

\begin{prop}
\label{prop:alb}
In the Example~\ref{pencil} above, the following properties hold. 
\begin{enumerate}
\item The quasi-Albanese map $f:M\to \mathbb C^*$ is not locally trivial. 
\item The logarithmic tangent sheaf $T_X(-\log D)$ is not polystable.  
\item If $n\ge 9$ and $D_i$ are very general, then two general fibers of $f$ are not birational to each other.  \end{enumerate}
Moreover, Properties $(1)$ and $(2)$ above also hold for any finite étale cover of $M$. 

\end{prop}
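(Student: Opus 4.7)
The plan is to establish items (1), (2), (3) directly using the Lefschetz pencil structure in Example~\ref{pencil}, and then lift (1) and (2) to arbitrary finite étale covers. For (1), I would identify $f:M\to\mathbb C^*$ with the quasi-Albanese map via Lemma~\ref{qalb}. By Example~\ref{pencil}, $\bar f:X'\to\PP^1$ has $(d-1)^n(n+1)\ge 1$ singular fibers, each with a single ordinary double point not lying on the exceptional divisor $E$, while the fibers over $[0:1]$ and $[1:0]$ are the smooth divisors $D_1',D_2'$. Hence some singular value of $\bar f$ lies in $\mathbb C^*$ with its singular point in $M$, so $f$ has a singular fiber and cannot be locally trivial.

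For (2), the idea is to form the morphism $\alpha^\vee:T_X(-\log D)\to\mathcal O_X$ obtained by contraction with $\alpha:=df/f\in H^0(X,\Omega_X^1(\log D))$. A direct local analysis (noting that $\alpha$ has nonzero residue $\pm d$ along each $D_i$) shows that $\alpha^\vee$ is surjective away from the finite critical locus of $\bar f$, so its image is an ideal cosupported in codimension $n\ge 3$ and the saturated kernel $F\subset T_X(-\log D)$ satisfies $c_1(F)=0$. Since $T_X(-\log D)$ is semistable of slope $0$ with respect to $-K_X$ by \cite{GSS}, polystability would force $F$ to be a direct summand with complement a line bundle $L$ of vanishing first Chern class; as $\PP^n$ is simply connected, $L\simeq\mathcal O_X$, and the splitting produces a nonzero global section of $T_X(-\log D)$, contradicting Proposition~\ref{no vf} (applicable since $k=2<n$).

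For (3), I would apply Shinder's motivic specialization argument \cite{Shinder} to the pencil $\bar f$, viewed as a family of degree-$\tfrac{n+1}{2}$ hypersurfaces in $\PP^n$ whose singular members have a single ODP. For $n\ge 9$ the degree meets Schreieder's \cite{Schreieder} threshold for stable irrationality of very general hypersurfaces in $\PP^n$; Shinder's argument then shows two very general fibers of $\bar f$ are not stably birational, a fortiori not birational.

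For (4), let $\pi:M'\to M$ be a finite étale cover (necessarily Galois by Corollary~\ref{cor pi1}) with ramified compactification $\pi:X'\to X$ and boundary $D':=\pi^{-1}(D)_{\rm red}$, so that $K_{X'}+D'=\pi^*(K_X+D)=0$. For (1) on $M'$, I would note that $f\circ\pi$ factors through $\mathrm{alb}_{M'}$ via a smooth morphism $\mathrm{Alb}(M')\to\mathbb C^*$ of semi-abelian varieties, so local triviality of $\mathrm{alb}_{M'}$ would propagate to $f\circ\pi$, contradicting the existence of singular fibers of $f\circ\pi$ (étale pullbacks of those of $f$). For (2) on $X'$, I would pull back $\alpha$ to a nonzero $\pi^*\alpha\in H^0(X',\Omega_{X'}^1(\log D'))$ and rerun the argument of (2). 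The hard part will be verifying, for each of the covers $N_1,N_{12},M_\ell$ and their combinations from Section~\ref{sec:covers}, that $X'$ is Fano (hence simply connected) and that $(X',D')$ satisfies the hypotheses of Proposition~\ref{no vf}, which I expect to follow from direct ramification computations.
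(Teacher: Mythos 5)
Your items (1)--(3), and your argument for property (1) on finite \'etale covers, are correct and essentially coincide with the paper's proof: contracting with $\alpha=df/f$ is the same morphism as the paper's $T_X(-\log D)\to \bar f^*T_{\mathbb P^1}(-\log B)\simeq \mathcal O_X$, polystability then forces a trivial direct summand, contradicting Proposition~\ref{no vf}; item (3) is the same reduction to Schreieder and Shinder; and for covers the factorization of $f\circ\nu$ through the quasi-Albanese of $M'$ is exactly the paper's argument.

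The genuine gap is in your plan for property (2) on finite \'etale covers. You propose to check that each compactified cover $(X',D')$ is a smooth Fano with SNC, ample boundary components and $k<n$ of them, so that Proposition~\ref{no vf} applies, expecting this to follow from ``direct ramification computations''. This step fails: the compactification of a finite \'etale cover of $M$ (the normalization of $X$ in the corresponding field extension) is in general \emph{singular}, as the paper explicitly points out. Already for $M_2=M\times_{\mathbb C^*}\mathbb C^*$ (base change by $z\mapsto z^2$), one of the covers on your own list, the compactification is the double cover of $\mathbb P^n$ branched along $D_1+D_2$, locally $\{w^2=z_1z_2\}$, which has $A_1$-type singularities along the preimage of $Z=D_1\cap D_2$; more general covers produce cyclic quotient singularities over $Z$, and the reduced boundary need not be SNC. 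So Proposition~\ref{no vf}, which is stated in the Main Setup for a Fano \emph{manifold} with SNC boundary, is simply not available, and no ramification computation will make these covers smooth. The paper circumvents this: keeping the surjection onto $\mathcal O_{X'}$ induced by $f'$, it shows directly that any putative lift $v\in H^0(X',T_{X'}(-\log D'))$ of $z\frac{\partial}{\partial z}$ must vanish, because the contraction $\frac{df'}{f'}(v)$ is a bounded holomorphic function on the quasi-projective manifold $M'$, hence constant, and the constant is $0$ since $df'$ vanishes at the ordinary double points of the singular fibres of $f'$ (which exist by the first part); thus $v$ is vertical and cannot lift $z\frac{\partial}{\partial z}$. (Alternatively, one restricts $v$ to a general smooth fibre of $f'$ and invokes the $k=1$ case of Proposition~\ref{no vf}.) You need to replace your hypothesis-checking step by an argument of this kind, or else prove a version of Proposition~\ref{no vf} valid for the singular pairs $(X',D')$.
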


\begin{rem}
This is in stark contrast with what happens when $D=0$, or even for klt pairs $(X,\Delta)$ such that $c_1(K_X+\Delta)=0$, in which case the Albanese map is locally trivial \cite[Theorem~0.1]{Amb05}. We refer to \cite{BFPT} for similar phenomena in the log canonical setting. Note that $T_X(-\log D)$ is semistable though, cf e.g. \cite[Theorem~A]{GSS}.
\end{rem}

\begin{rem}
In the case of $(\mathbb P^2, L+Q)$ considered in Remark~\ref{p2 lq}, the quasi-Albanese $f$ is still singular (e.g. at the point $[0:0:1]$ but it is isotrivial as the general fiber of $f$ is the smooth quadric. Yet $T_{\mathbb P^2}(-\log(L+Q))$ is not polystable, as follows from the last step of the proof of Proposition~\ref{prop:alb}. 
\end{rem}

\begin{proof}
$(1)$ follows from the fact that $f$ has some singular fibers. 

$(2)$ The map $f$ can be extended to a surjective map $\bar f:X\setminus \Delta \to \mathbb P^1$ and is is not difficult to see that it induces a non-zero morphism of coherent sheaves $T_{X\setminus \Delta} (-\log D)\to \bar f^*T_{\mathbb P^1}(-\log B) $ where $B= \{0\}+\{\infty\}$. Since $T_{\mathbb P^1}(-\log B)$ is the trivial line bundle (generated by the vector field $z\frac{\partial}{\partial z}$ and $\Delta\subset X$ has codimension two, one can extend the map to a surjective morphism of coherent sheaves $T_{X} (-\log D)\to \mathcal O_X$. If $T_{X} (-\log D)$ were polystable, one could find a non-zero holomorphic section $\mathcal O_X \to T_{X} (-\log D)$ lifting $z\frac{\partial}{\partial z}$. This is in contradiction with Proposition~\ref{no vf}, hence shows that $T_X(-\log D)$ is not polystable. 

$(3)$ is a consequence of the stable irrationality of a very general hypersurface of degree $d\ge \log_2(n-1)+2$ in $\mathbb P^n$, cf \cite{Schreieder} and \cite[Corollary~1.3]{Shinder}.	

\smallskip 

Let us now show the last claim. If $\nu:M'\to M$ is finite étale, we still have $H^1(M', \mathbb C)\simeq \mathbb C$ since $\pi_1(M)$ is abelian of rank one. Therefore, the quasi-Albanese of $M'$ is a map $f':M'\to \mathbb C^*$. By the universal property of the quasi-Albanese map, we have a factorization
\[
\begin{tikzcd}
 M' \arrow[r, "f'"] \arrow[d, "\nu", swap] & \mathbb C^*\arrow[d,"\sigma"]  \\
M  \arrow[r, "f"] & \mathbb C^*
\end{tikzcd}
\]
and $\sigma$ is of the form $\sigma(z)=z^m$ for some $m\in \mathbb Z$. In particular, since $\nu$ is étale and $f$ has some singular fibers, so does $f'$. Let $\overline \nu :X'\to X$ be the extension of $\nu$ and $D'=\overline{\nu}^{-1}(D)$. In general, $X'$ will be singular hence one may not apply Proposition~\ref{no vf} as in item $(1)$. Yet, using the same strategy, it is enough to show that any lift $v\in H^0( X', T_{ X'}(-\log D'))$  of $z\frac{\partial}{\partial z}$ for the fibration $f'$ has to vanish, i.e. $v=0$. Now, the contraction of the logarithmic differential $\frac{df'}{f'}$ and $v$ yields a {\it bounded} holomorphic function on the quasi-projective manifold $M'$, hence it is constant. Since $\frac{df'}{f'}$ has zeros as we explained above, it follows that $\frac{df'}{f'}(v)\equiv 0$. In particular, $v\in \mathrm{Ker}(df')$ is vertical for $f'$, hence it cannot lift $z\frac{\partial}{\partial z}$. Alternatively, one can also restrict the vertical vector field $v$ to a general fiber $M'_t$ of $f'$ and see from thanks to case $k=1$ of Proposition~\ref{no vf} that $v|_{M'_t}=0$, hence $v\equiv 0$. 
\end{proof}

\medskip

\noindent
{\it On the Jordan-Hölder filtration of $T_X(-\log D)$.}
\label{JH}

\smallskip
In the Setup~\ref{set two}, we keep assuming that $D_1\sim D_2$. With the notation of the proof of Proposition~\ref{prop:alb}, recall that we have a sheaf morphism $T_{X}(-\log D)\to \bar f^*T_{\mathbb P^1}(-\log B)\simeq \mathcal O_X$ which is well-defined over the complement of $Z$, hence extends to a global sheaf morphism $\alpha:T_{X}(-\log D)\to\mathcal O_X$ by reflexivity. We define $\mathcal F:=\mathrm{ker}(\alpha)$.

For $t\in \mathbb C^*$, set $D_t=(s_1=ts_2)$ which is a Fano manifold when $t$ is general. Then $D_1\cap D_t=D_2\cap D_t=Z$. As a divisor in $D_t$, $Z$ is a smooth anticanonical and $\mathcal F|_{D_t}$ is isomorphic to the logarithmic tangent bundle of the pair $(D_t,Z)$. By Theorem~\ref{stability}, $\mathcal F|_{D_t}$ is stable with respect to $D_1|_{D_t}$ hence $\mathcal F$ is stable with respect to $D_1$. Therefore, 
\[0\subset \mathcal F \subset T_X(-\log D)\] 
is the Jordan-Hölder filtration of the semistable bundle $T_X(-\log D)$ with respect to $D_1$ (or $D_2$, or $-K_X$). Moreover, it follows from Proposition~\ref{no vf} that $T_X(-\log D)$ is not polystable (independently of the polarization).

This argument generalizes immediately to the case of a Fano manifold $X$ with an anticanonical SNC divisor $D=D_1+\cdots+D_k$ such that $k<n$ and $D_i\sim D_1$ for all $i\ge 2$. More precisely we consider the map $(\frac{s_1}{s_k}, \ldots, \frac{s_{k-1}}{s_k}):M\to (\mathbb C^*)^{k-1}$ which extends to a rational map $X\dashrightarrow (\mathbb P^1)^{k-1}$ defined in codimension one and yielding a sheaf map $\alpha:T_{X}(-\log D)\to \mathcal O_X^{\oplus (k-1)}$ whose kernel is stable with respect to $-K_X$ as one sees by restricting to a fiber. One easily gets the Jordan-Hölder filtration of $T_X(-\log D)$ from there. 
\label{stable piece}

\subsection{Riemannian geometry of the universal cover}

In this section we examine some geometric properties of the Calabi-Yau manifolds constructed in Setup~\ref{set two}.  Our first result concerns the holonomy group.

\begin{thm}
\label{thm hol 2}
In Setup~\ref{set two}, one has $\Hol(M, \omega)=\SU(n)$.
\end{thm}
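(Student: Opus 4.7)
The plan is to adapt the proof of Proposition~\ref{hol one} to the two-divisor setting, where the critical new difficulty is that $\pi_1(M)\simeq\Z\times\Z_d$ is infinite. As in Setup~\ref{set one}, I would first establish $\Hol(M,\omega)\subset\SU(n)$: Ricci-flatness forces $|\Omega|^2$ to be constant, and Bochner's identity $\Delta|\Omega|^2=|\nabla\Omega|^2$ then yields $\nabla\Omega=0$. The key structural observation that unlocks the rest of the argument is that every irreducible Kähler Ricci-flat holonomy group on the Berger--Simons list (namely $\SU(n)$ or $\Sp(n/2)$) has finite index in its normalizer inside $\SU(n)$; combined with the normality of $\Hol^\circ$ in $\Hol$, this forces the component group $\Hol/\Hol^\circ$ to be finite, so that $\Hol$ is itself closed (hence compact) in $\SU(n)$.

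Finiteness of $\Hol/\Hol^\circ$ means that the unitary action of $\pi_1(M)$ on the finite-dimensional space $\mathcal H^p$ of parallel $(p,0)$-forms on $(\widetilde M,\widetilde\omega)$ factors through a finite quotient, so every $\sigma\in\mathcal H^p$ descends to a holomorphic parallel $(p,0)$-form on some finite étale cover $M'\to M$. Using the geometric description of finite covers in Section~\ref{sec:covers} (and a log resolution if necessary), $M'$ admits a smooth compactification $\bar M'$ with log smooth anticanonical boundary $D'=\pi^{-1}(D)$ having $k'\in\{1,2\}$ components, each ample (by the transitivity of the Galois action on sheets and $\Q$-proportionality to $-K_{\bar M'}$). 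I would then extend $\sigma$ to an element of $H^0(\bar M',\Omega^p_{\bar M'}(\log D'))$ by repeating the $L^2$-and-bounded-norm argument of Claim~\ref{no p forms}, using the Tian--Yau-type asymptotics of $\omega$ near smooth points of $D'$ and the Collins--Li asymptotic model near the corner $\pi^{-1}(Z)$. Proposition~\ref{easy vanishing} then forces this space to vanish for $k'\le p<n$, which handles every $0<p<n$ except the exceptional case $(k',p)=(2,1)$; there, Lemma~\ref{qalb} identifies the space of log $1$-forms on $\bar M'$ with $\C$, generated by the pullback of $\alpha=\frac{df}{f}$, and this pullback is not parallel for $\omega$ by Remark~\ref{bochner fails}. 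Hence $\mathcal H^p=0$ for every $0<p<n$.

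The de Rham theorem and the Berger--Simons classification of irreducible Kähler holonomies now force $\Hol^\circ=\SU(n)$: a non-trivial de Rham splitting of $\widetilde M$ would contribute a parallel holomorphic volume form of intermediate degree to some $\mathcal H^{n_i}$, while an $\Sp(n/2)$ factor (for $n$ even) would contribute a parallel holomorphic symplectic form to $\mathcal H^2$. Combined with $\Hol\subset\SU(n)$, this yields $\Hol(M,\omega)=\SU(n)$. The hardest step will be the asymptotic extension of descended parallel forms near the corner $Z=D_1\cap D_2$, where the Collins--Li metric departs from the radial Tian--Yau model \eqref{asymp} and the pointwise comparison \eqref{norm} between the norm of a parallel form and its coefficients in a logarithmic frame must be redone using the explicit corner model; a secondary difficulty is checking that Proposition~\ref{easy vanishing} still applies on the compactifications $\bar M'$, which are straightforwardly Fano for the purely cyclic covers described in Lemma~\ref{torsion} but only log Fano in general after the base changes $z\mapsto z^m$ along the quasi-Albanese $f$.
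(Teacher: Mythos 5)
There is a genuine gap, and it sits at the very first structural step: you deduce that $\Hol/\Hol^\circ$ is finite from the assertion that $\Hol^\circ$ is ``$\SU(n)$ or $\Sp(n/2)$'' and hence of finite index in its normalizer inside $\SU(n)$. But at that stage nothing rules out that $\Hol^\circ$ is \emph{reducible}: a priori the de Rham decomposition of $\widetilde M$ could be nontrivial, so $\Hol^\circ$ could be a product $\prod_i G_i$ (possibly with a flat factor on which it acts trivially), and for such groups the normalizer in $\SU(n)$ is \emph{not} a finite extension of $\Hol^\circ$ (e.g.\ $N_{\SU(n)}(\SU(n_1)\times\SU(n_2))\supset S(U(n_1)\times U(n_2))$ already has a positive-dimensional quotient, and with a flat factor the situation is worse). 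Since in your scheme the irreducibility of $\Hol^\circ$ is only obtained at the end, as a consequence of the vanishing of parallel $(p,0)$-forms, which you prove by descending them to finite covers, which in turn uses the finiteness of $\Hol/\Hol^\circ$, the argument is circular: finiteness of the component group $\Leftarrow$ irreducibility $\Leftarrow$ vanishing on finite covers $\Leftarrow$ descent $\Leftarrow$ finiteness of the component group. This is exactly the difficulty created by $\pi_1(M)$ being infinite, and the paper's proof is organized to avoid it: for irreducibility (Claim~\ref{irreducible}) one does not need $G/G^\circ$ finite, only the composite $\pi_1(M)\to G/G^\circ\to \mathfrak{S}_I$ into the \emph{finite} group permuting the irreducible summands $V_i$; its kernel has finite index and gives the cover $M_{\rm hol}$ on which a putative splitting produces parallel forms of intermediate degree, killed by Claim~\ref{no p forms 2}. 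For the exclusion of $\Sp(m)$, the paper uses the character $\chi:\pi_1(M)\to\C^*$ acting on the line of parallel symplectic forms, normalized by $\widetilde\sigma^m=\widetilde\Omega$ so that $\chi^m=1$; this is, in effect, the correct use of your normalizer observation (the normalizer of $\Sp(m)$ in $\SU(2m)$ is a finite extension precisely because of this relation), but it only becomes available \emph{after} irreducibility is known, not before.

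Two secondary points. First, your route to the vanishing statement on finite covers (compactify, extend to logarithmic forms, invoke Proposition~\ref{easy vanishing}, treat $(k',p)=(2,1)$ via Lemma~\ref{qalb} and Remark~\ref{bochner fails}) is workable in spirit but heavier than necessary and has its own wrinkles: the compactified covers may be singular, the individual boundary components need not be ample even though their Galois orbit sums are, and no analysis at the corner $Z=D_1\cap D_2$ is actually needed. The paper's Claim~\ref{no p forms 2} avoids all of this by working at a \emph{general} point of $p^{-1}(D_1\setminus D_2)$, using the non-generic (Tian--Yau type) asymptotics~\eqref{non-generic asymp g0} to see that constancy of the norm forces the coefficients in a logarithmic frame to vanish, with extension across codimension two handled by reflexivity. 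Second, once you do have $\Hol^\circ=\SU(n)$, the conclusion $\Hol=\SU(n)$ from $\Hol\subset\SU(n)$ is fine; the repair your proof needs is entirely in replacing the component-group finiteness claim by finite-index subgroups of $\pi_1(M)$ produced directly, as in Claims~\ref{no p forms 2} and~\ref{irreducible} and the character argument.
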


\begin{rem}
\label{bochner fails}
The Bochner principle fails in this setting, since there exists a non-zero logarithmic one-form $\frac {df}{f}$ but the latter cannot be parallel by the above theorem (cf also Claim~\ref{no p forms 2} below for a more elementary argument). 
\end{rem}

\begin{rem}
The proof of Theorem~\ref{thm hol 2} will show more generally that given a (non necessarily complete) Kähler manifold $(M,\omega)$ of dimension $n$  satisfying the following two conditions
\begin{enumerate}[label=$(\roman*)$] 
\item There exists $\Omega\in H^0(M, K_M)$ satisfying $\omega^n=i^{n^2}\Omega\wedge \overline \Omega$, and
\item For any integer $0<p<n$ and any finite étale cover $q:M'\to M$, there is no non-zero $p$-form parallel with respect to $q^*\omega$,
\end{enumerate}
then the holonomy of $(M,\omega)$ is $\SU(n)$. In particular, this would apply to the conjectural complete Calabi-Yau metric on the complement $M:=X\setminus D$ of an SNC divisor $D=\sum D_i$ in a Fano manifold $X$ where each component $D_i$ is proportional to $-K_X$, cf \cite{CTY}. 
\end{rem}

Before beginning the proof, we recall some of the basic properties of the complete Calabi-Yau metric constructed in \cite{CL}. As before, we assume for simplicity $d_1=d_2=d$; the general case $d_1\ne d_2$ can be reduced to this case by appropriately rescaling the logarithmic coordinates below. Set $L_i:=\mathcal O_X(D_i)$ for $i=1,2$ (those two line bundles are isomorphic to a fixed line bundle $\mathcal L$) and fix a smooth metric $h_{\mathcal L}$ on ${\mathcal L}|_Z$ whose Chern curvature form is the K\"ahler-Ricci flat metric $\omega_Z$ in that cohomology class. Set $E_Z:=L_1|_Z\oplus L_2|_Z$; then on the total space of $E_Z$, the respective distance functions to the zero section yield functions $r_i$ and we define $x_i:=-\log r_i$. 

Consider the model K\"ahler metric $\omega_0$ on the total space of $E_Z\to Z$ which can be written as  
\[\omega_{0}:=\sum_{i,j=1}^2 \frac{\partial^2 u}{\partial x_i \partial x_j} dx_i\wedge d^c x_j +(1+\sum_{i=1}^2 \frac{\partial u}{\partial x_i}) \omega_Z\]
 $u=u(x_1,x_2)$ is a convex function on $\mathbb R_{>0}^2$ homogeneous of degree $\frac{n+2}{n}$ in the variable $x_1+x_2$.  For the purposes of constructing complete Calabi-Yau metrics, we need $u$ to solve the optimal transport problem
 \[
 \begin{aligned}
\left(\sum_{i=1}^2 \frac{\partial u}{\partial x_i}\right)^{n-2} \det D^2u &=1 \quad \text{ in } \mathbb{R}^2_{>0}\\
\nabla u (\mathbb{R}^2_{>0}) &= \{(y_1,y_2) \in \mathbb{R}^2 : y_1+y_2>0\}
\end{aligned}
\]
The existence of such a function $u$ is established in \cite{CL} (see also \cite{CTY}).
 \\

For the time being, we work on a euclidean neighborhood $U$ of a point $p\in Z$ equipped with local coordinates. Consider a trivializing section $e$ of ${\mathcal L}|_U$ with norm $|e|_{h_{\mathcal L}}^2=e^{-\phi_{\mathcal L}}$.  We have fiber coordinates $z_i$ in $L_i$ and one can assume that $s_i=z_ie$. Then $x_i=-\log |z_i|+\frac 12 \phi_{\mathcal L}$ and

 \[dx_i \wedge d^c x_j= \frac{\sqrt{-1}}{4\pi}\partial (\log z_i) \wedge \overline{ \partial \log z_j}+\mathcal{E}\] 
 
 where $\mathcal{E}=-\frac12(d \log |z_i| \wedge d^c \phi_L+d \phi_L\wedge d^c \log |z_j|) +\frac 14 d\phi_{\mathcal L}\wedge d^c \phi_{\mathcal L}$. \\

By homogeneity, we have 
\[u\sim |x|^{1+\frac{2}{n}}, |du| \sim |x|^{\frac{2}{n}}, |D^2u| \sim |x|^{\frac{2}{n}-1}\]
where $|x|:=x_1+x_2$. In particular, the error term $\mathcal{E}$ is of strictly lower order, and hence can be ignored for the purposes of analyzing the asymptotic geometry. If we write 
\[z_i=e^{-(y_i+\sqrt{-1}\theta_i)}, \quad {\rm for} \, \, i=1,2\]  the Riemannian metric associated to $\omega_0$ is asymptotic to
\begin{equation}
\label{asymptotic g0}
g_0=\sum_{i,j=1}^2 \frac{\partial^2 u}{\partial x_i \partial x_j} (dy_i\otimes dy_j+d\theta_i\otimes d\theta_j) +(\sum_{i=1}^2 \frac{\partial u}{\partial x_i}) g_Z
\end{equation}

The main theorem of \cite{CL} produces a complete Calabi-Yau metric $g_{\rm CL}$ on $M= X\setminus (D_1+D_2)$ which is asymptotic to $g_0$ in the ``generic region" $ x_i \gg 1, C^{-1}\le \frac{x_2}{x_1}\le C$.  In the ``non-generic" regions $ x_1 \gg x_2 \geq 1$ and $x_2\gg x_1 \geq 1$, the metric $g_{\rm CL}$ is asymptotic to K\"ahler metric constructed using the Tian-Yau metrics on $D_1\setminus D_2$ and $D_2 \setminus D_1$.  Precisely, for $x_1 \gg x_2$, the metric $g_{\rm CL}$ is quasi-isometric to the metric
\begin{equation}
\label{non-generic asymp g0}
\omega_0' = x_1^{\frac{{n-2}}{n(n-1)}} \omega_{{\rm {TY}}, D_1} + (x_1)^{-\frac{(n-2)}{n}} dx_1 \wedge d^cx_1
\end{equation}
where $\omega_{{\rm TY},D_1}$ denotes the Tian-Yau metric on $D_1$.  For $x_2 \sim O(1)$, which is the primary regime of interest for us, $\omega_{{\rm TY},D_1}$ is quasi-isometric to a smooth metric.  We refer the reader to \cite[Section 4]{CL} for more details.

\begin{proof}[Proof of Theorem~\ref{thm hol 2}]
We fix $x\in M$, set $G:=\Hol_x(M, \omega)$ and an identification $V:=T_{M,x}\simeq \mathbb C^n$. Since $\Omega$ is parallel, one has $G:=\Hol(M, \omega)\subset\SU(n)$. We now consider the standard representations of $G$ and its identity component $G^\circ$ on $V$.

The following vanishing result for parallel forms of intermediate degrees on covers will be crucial in what follows; it proofs is very similar to that of Claim~\ref{no p forms}.

\begin{claim}
\label{no p forms 2}
Let $0<k<n$ be an integer and let $p :S \to M$ be a finite \'etale cover. There are no non-zero (holomorphic) $k$-forms on $S$ which are parallel with respect to $p^*\omega$. 
 \end{claim}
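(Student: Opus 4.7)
The plan is to adapt the argument of Claim~\ref{no p forms} to the more delicate geometry of the Collins--Li metric near the reducible divisor $D=D_1+D_2$. First, extend $p: S \to M$ to a finite map $\pi: Y \to X$ between normal projective varieties (via the integral closure of $\cO_X$ in the function field of $S$). Since $X$ is simply connected and $\pi$ is étale over $M$, $\pi$ ramifies along $D$; by the explicit description in Section~\ref{sec:covers} of the Galois covers of $M$, at a generic point of $\widetilde D_i := \pi^{-1}(D_i)$ the map has the local form $(w_1, w_2, \zeta) \mapsto (w_1^{m_1}, w_2^{m_2}, \zeta)$ for some ramification indices $m_i\ge 1$.

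Suppose $\sigma$ is a parallel holomorphic $k$-form on $S$ with $0 < k < n$. Working on a local chart of $Y$ near a generic point of $\widetilde Z := \pi^{-1}(Z)$, I would write
\[
\sigma \;=\; \sum_{\substack{I \subset \{1,2\} \\ |I| + |J| = k}} f_{IJ}(w, \zeta)\, \frac{dw_I}{w_I} \wedge d\zeta_J,
\]
where $(w_1, w_2, \zeta_1, \ldots, \zeta_{n-2})$ are local holomorphic coordinates and the $f_{IJ}$ are holomorphic on the punctured chart. The pullback of the Collins--Li model~\eqref{asymptotic g0} through $\pi$ only rescales the $x_i$ by $m_i$, so the homogeneity of $u$ still yields $|d\log w_i|^2 \asymp |x|^{1-\frac{2}{n}}$ and $|d\zeta_\alpha|^2 \asymp |x|^{-\frac{2}{n}}$ with $|x|:=x_1+x_2 \to \infty$; since the $dx_i$-directions and the $g_Z$-directions are asymptotically orthogonal in $g_0$, this gives the key estimate
\[
|\sigma|^2 \;\asymp\; \sum_{I,J} |f_{IJ}|^2 \cdot |x|^{|I| - \frac{2k}{n}}.
\]

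Because $\sigma$ is parallel, $|\sigma|^2 \equiv C$ is constant. Since $0<k<n$, the exponent $|I|-2k/n$ is strictly positive when $|I|=2$ and strictly negative when $|I|=0$. Boundedness therefore forces $|f_{\{1,2\},J}|^2 \leq C'|x|^{\frac{2k}{n} - 2}$, so these coefficients extend holomorphically across $\widetilde D$ by the Riemann extension theorem and vanish on $\widetilde Z$. A parallel analysis near a generic point of $\widetilde D_1\setminus \widetilde Z$, now using the Tian--Yau-type asymptotics~\eqref{non-generic asymp g0} (where $|d\log w_1|^2 \asymp x_1^{(n-2)/n}$ while the transverse Tian--Yau directions contribute bounded terms), forces the coefficients $f_{IJ}$ with $1\in I$ to vanish along $\widetilde D_1$, and symmetrically along $\widetilde D_2$. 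Combined with the automatic decay of the $|I|=0$ terms, this yields $|\sigma|^2 \to 0$ along any sequence approaching a generic point of $\widetilde D$, and the constancy of $|\sigma|^2$ then gives $C=0$, hence $\sigma\equiv 0$.

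The main obstacle is the careful bookkeeping required by the coexistence of three asymptotic regimes of the Collins--Li metric: the generic region near $\widetilde Z$ with its two-parameter scaling, and the two non-generic Tian--Yau-like strata along $\widetilde D_i\setminus \widetilde Z$. In particular, for the borderline case $k=n/2$ (when $n$ is even), the $|I|=1$ exponent vanishes in the generic region, so the conclusion cannot be drawn from the $g_0$ asymptotics alone: one must combine with the non-generic estimates to kill the $f_{IJ}$ with $|I|=1$. Ensuring that the extensions across $\widetilde D_1$, $\widetilde D_2$, and $\widetilde Z$ glue coherently, and that the bounds from the various regions are compatible, is the most delicate part of the argument.
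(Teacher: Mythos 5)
Your overall strategy (extend the cover to $Y\to X$, view a parallel form as a section of $\Omega^{[k]}_Y(\log \pi^{-1}D)$, and play the growth/decay of the frame weights in the asymptotic model against the constancy of $|\sigma|$) is the same kind of argument the paper uses, but you route it through the hardest region, while the paper's proof never goes near $Z$ at all. The paper works only at a generic point of $p^{-1}(D_1\setminus D_2)$, where $p^*\omega_{\rm CL}$ is quasi-isometric to the model \eqref{non-generic asymp g0}: there the coefficients multiplying $\frac{dz_1}{z_1}\wedge dz_J$ carry the weight $(-\log|z_1|)^{\frac{n-2}{n}-(k-1)\frac{n-2}{n(n-1)}}$, whose exponent equals $\frac{n-2}{n}\cdot\frac{n-k}{n-1}>0$ for every $0<k<n$, while the purely transverse coefficients carry the decaying weight $(-\log|z_1|)^{-k\frac{n-2}{n(n-1)}}$. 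So in this single chart boundedness forces the residue-type coefficients to vanish along the divisor, the remaining terms decay, $|\sigma|^2\to 0$, and constancy kills $\sigma$ — no case distinction, no borderline $k=n/2$, and no gluing between regimes.

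Two points in your write-up need correction before your version closes. First, you assert that near $\widetilde D_1\setminus\widetilde Z$ ``the transverse Tian--Yau directions contribute bounded terms.'' They do not: by \eqref{non-generic asymp g0} the metric in those directions is $x_1^{\frac{n-2}{n(n-1)}}\omega_{{\rm TY},D_1}$, which blows up, so the dual norms of the transverse covectors decay like $x_1^{-\frac{n-2}{n(n-1)}}$. This decay is not a nuisance to be excused — it is precisely what makes $|\sigma|^2\to0$ near $\widetilde D_1$ and hence what finishes the proof; with merely bounded transverse weights your final step ``combined with the automatic decay of the $|I|=0$ terms'' would not follow, since that decay was established only in the generic region near $\widetilde Z$, for different coefficient functions in a different chart. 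Second, the generic-region model \eqref{asymptotic g0} approximates $g_{\rm CL}$ only where $x_1$ and $x_2$ are comparable, so the bounds you extract from the weight $|x|^{|I|-\frac{2k}{n}}$ hold only on a conical region approaching $\widetilde Z$, not on a full punctured neighborhood; the holomorphic extension and vanishing statements for the $f_{IJ}$ therefore do not come for free from Riemann extension as written, and would have to be fed by the non-generic analysis anyway. Once you take the corrected non-generic asymptotics seriously, that analysis alone already proves the claim, exactly as in the paper (and as in Claim~\ref{no p forms}), so the whole discussion near $\widetilde Z$ and the $k=n/2$ caveat can be discarded.
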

 
 \begin{proof}[Proof of Claim~\ref{no p forms 2}]
As in the proof of Claim~\ref{no p forms}, we extend $p$ to a finite map $\bar p:Y\to X$ between normal projective varieties and we see that the asymptotics of $p^*\omega_{\rm CL}$ near a general point of $p^{-1}(D_i)$ are the same as those of $\omega$ near a general point of $D_i$. In particular a parallel $k$-form on $Y$ extends to a holomorphic section of the reflexive sheaf $\Omega^{[k]}_Y(\log p^{-1}(D))$.  On the other hand, if $(z_1,\ldots, z_n)$ is a local system of coordinates near a point in $p^{-1}(D_1\setminus D_2)$  such that $z_1$ is a local defining function for a $p^{-1}(D_1)$, then from~\eqref{non-generic asymp g0}, we see that $p^*\omega_{\rm CL}$ is quasi-isometric to
\[
(-\log(|z_1|)^{\frac{{n-2}}{n(n-1)}}\sum_{k=2}^{n} \sqrt{-1}dz_k\wedge d\bar{z}_k + (-\log|z_1|)^{-\frac{(n-2)}{n}} \frac{dz_1 \wedge d\bar{z}_1}{|z_1|^2}.
\]
We can now argue as in the proof of Claim~\ref{no p forms}.  If $\sigma$ is a holomorphic section of  $\Omega^{[k]}_Y(\log p^{-1}(D))$ then in the local coordinates $(z_1,\ldots, z_n)$ we can write
\[
\sigma = \sum_{|J|=k-1} f_{J} \frac{dz_1}{z_1} \wedge dz_{J} + \sum_{|K|=k,\, 1 \notin K} f_{K}dz_K
\]
and so
{\small
\[
|\sigma|^2 \sim \sum_{|J|=p-1}|f_J|^2(-\log|z_1|)^{\frac{(n-2)}{n}-(k-1)\frac{{n-2}}{n(n-1)}} + \sum_{|K|=k,\, 1 \notin K} |f_{K}|^2(-\log(|z_1|)^{-k\frac{{n-2}}{n(n-1)}}
\]
}
Since $0<k<n$, and $|\sigma|^2$ is constant, it follows as in the proof of Claim~\ref{no p forms} that $\sigma =0$.
\end{proof}

\begin{claim}
\label{irreducible}
The action of $G^\circ$ on $V$ is irreducible, hence $G^\circ$ is either $\SU(n)$ or $\Sp(\frac n2)$ (in that case $n$ is even). 
\end{claim}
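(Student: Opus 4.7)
The plan is to argue by contradiction. I assume $G^\circ$ acts reducibly on $V$ and use the de Rham decomposition theorem to produce a non-zero parallel holomorphic form of intermediate degree on the universal cover $\widetilde M$; a contradiction will then follow from a local version of the argument of Claim~\ref{no p forms 2}. Once irreducibility is established, the classification statement follows from the Berger-Simons classification of Riemannian holonomies together with the Ricci-flat Kähler condition $G^\circ \subset \SU(n)$.

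Since $\omega$ is complete Ricci-flat Kähler, the universal cover $(\widetilde M, \widetilde \omega)$ is a complete simply connected Ricci-flat Kähler manifold whose holonomy equals $G^\circ$. If $G^\circ$ acts reducibly on $V$, the de Rham decomposition theorem yields a holomorphic isometric splitting $\widetilde M = \widetilde M_1 \times \widetilde M_2$ with complex dimensions $0 < k_i < n$ and $k_1+k_2=n$. Each factor admits a parallel holomorphic volume form $\Omega_i$, so $\Omega_1$ is a non-zero parallel $(k_1, 0)$-form on $\widetilde M$ with constant (nonzero) norm.

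The crux of the argument, and the step requiring the most care, is to notice that the proof of Claim~\ref{no p forms 2} is essentially local. Indeed, near a preimage in $\widetilde M$ of a generic point of $D_1$ or $D_2$, the covering map $\pi : \widetilde M \to M$ is locally trivial, so $\Omega_1$ restricts to a local parallel form $\tau$ on some $U \cap M \subset M$. In local coordinates $(z_i)$ where $D = (z_1 = 0)$ and using the Collins-Li asymptotics~\eqref{non-generic asymp g0}, the coefficients of $\tau$ are $L^2$-bounded (because $|\tau| = |\Omega_1|$ is constant), hence extend holomorphically across $D \cap U$; the same norm comparison as in Claim~\ref{no p forms 2} then forces $|\tau|^2 \to 0$ as $|z_1| \to 0$. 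Since $|\tau|$ is constant this forces $\Omega_1 \equiv 0$ on an open set of $\widetilde M$, hence on all of $\widetilde M$ by parallel transport, the desired contradiction. Once irreducibility of $G^\circ$ is established, the Berger-Simons classification in the Ricci-flat Kähler setting yields $G^\circ \in \{\SU(n), \Sp(n/2)\}$, with the latter case requiring $n$ even.
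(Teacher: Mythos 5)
There is a genuine gap at the crux of your argument: you apply a ``local version'' of Claim~\ref{no p forms 2} to the parallel form $\Omega_1$ living on the \emph{universal} cover $\widetilde M$, but Claim~\ref{no p forms 2} is stated, and its proof only works, for \emph{finite} étale covers. The finiteness is not a technicality: the proof extends the cover $p:S\to M$ to a finite branched cover $\bar p:Y\to X$ of normal projective varieties, so that the parallel form becomes a holomorphic section of $\Omega^{[k]}_Y(\log \bar p^{-1}(D))$ near the boundary divisor; it is the holomorphic extension \emph{across} that divisor, combined with the norm asymptotics, that forces the coefficients (and then the form) to vanish. In Setup~\ref{set two} the group $\pi_1(M)$ is a finite extension of $\mathbb Z$ (Corollary~\ref{cor pi1}), and the meridian around $D_1$ has infinite order in it, so over a punctured neighborhood $U\setminus D$ of a generic point of $D_1$ each connected component of $\pi^{-1}(U\cap M)\subset\widetilde M$ is an \emph{infinite cyclic} cover of $U\setminus D$ (a log-type cover), not a disjoint union of copies of $U\cap M$. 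Consequently: (a) your assertion that ``$\Omega_1$ restricts to a local parallel form $\tau$ on some $U\cap M\subset M$'' is unjustified — descending $\Omega_1$ would require it to be invariant under the infinite cyclic deck group stabilizing that component, and deck transformations have no reason to preserve $\Omega_1$ (they may permute or unitarily rotate the de Rham factors); and (b) even working directly upstairs, there is no divisor to extend across: on the log-cover a holomorphic function can decay polynomially in $-\log|z_1|$ without vanishing (e.g.\ $1/w$ with $z_1=e^{2\pi i w}$), so constancy of $|\tau|$ no longer yields $\tau\equiv 0$. Indeed, that the universal cover admits no compactification is precisely Theorem~\ref{univ cover}, so no argument relying on compactifying $\widetilde M$ can be expected to go through.

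This equivariance problem is exactly what the paper's proof is designed to circumvent: it decomposes $V=V_0\oplus\bigoplus_i V_i$ into $G^\circ$-irreducible pieces, notes that $G$ permutes the $V_i$ because $G^\circ$ is normal in $G$, and passes to the \emph{finite} étale cover $M_{\rm hol}\to M$ corresponding to the kernel of $\pi_1(M)\to G/G^\circ\to\mathfrak S_I$, on which the holonomy preserves the decomposition. There one can split the holonomy as a product $\prod_j G_j$ with $G_j\subset \mathrm{U}(W_j)$ (Besse, Theorem~10.38), use the determinant character to see $G_j\subset\SU(W_j)$, and parallel transport a generator of $\det W_j$ to produce an intermediate-degree parallel form on a finite cover, to which Claim~\ref{no p forms 2} legitimately applies. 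Your de Rham splitting of $\widetilde M$ and the determinant observation are fine as far as they go, but to complete your argument you would need to supply the descent to a finite cover (or an independent vanishing statement for parallel forms on the universal cover), and as written that step is missing and cannot be repaired by calling Claim~\ref{no p forms 2} ``essentially local.''
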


\begin{proof}[Proof of Claim~\ref{irreducible}]
We can decompose 
\[V:=V_0\oplus \bigoplus_{i\in I} V_i\] 
where $V_0=V^{G^\circ}$ and the $V_i$ are the irreducible factors of $G^\circ$ or dimension at least two. Since $G^\circ$ is normal in $G$, $G$ preserves $V^\circ$ and permutes the $V_i$, yielding a map $G/G^\circ \to \mathfrak S_I$. The kernel of the composition $\pi_1(M)\to G/G^\circ \to \mathfrak S_I$ has finite index hence yields a finite \'etale cover 
\[p : M_{\rm hol}\to M\]
 with the property that the holonomy $G_{M_{\rm hol}}$ of  $(M_{\rm hol}, p^*\omega)$ preserves the decomposition $V=V_0\oplus \bigoplus_{i\in I} V_i$ under the appropriate identification of $G_{M_{\rm hol}}$ as a subgroup of $G$. We refer to \cite[Proposition~7.3]{GGK} for more details on this construction. 

We claim that the action of $G_{M_{\rm hol}}$ on $V$ is irreducible, from which it will follow that the action of $G^\circ$ is irreducible as well, given how $M_{\rm hol}$ was constructed. Indeed, the decomposition into $G_{M_{\rm hol}}$-irreducible pieces $V=\bigoplus_{j\in J} W_j$ would yield a splitting $G_{M_{\rm hol}}=\prod_{j\in J} G_j$ where $G_j \subset \mathrm{U}(W_j)$ by \cite[Theorem~10.38]{Besse}. Since $G_{M_{\rm hol}}\subset \SU(V)$, the product structure of $G_{M_{\rm hol}}$ implies that $G_j \subset \SU(W_j)$. Parallel transport would then provide non-zero parallel forms of degree $\dim W_j$ on $M_{\rm hol}$. This contradicts Claim~\ref{no p forms 2} unless $|J|=1$.

The last assertion in the claim follows from Berger classification since $\omega$ is not locally symmetric (otherwise it would be flat).
\end{proof}

Thanks to Claim~\ref{irreducible}, the theorem will follow if one can exclude the case $n=2m$ even and $G^\circ=\Sp(m)$. We argue by contradiction and assume that the universal cover $\widetilde \pi:\widetilde M\to M$ satisfies $\Hol(\widetilde M, \widetilde \omega)=\Sp(m)$ where $\widetilde \omega:=\widetilde \pi^*\omega$. Then there exists a parallel symplectic two-form $\widetilde \sigma$ on $\widetilde M$, unique up to scalar multiple. This defines a character
\[\chi:\pi_1(M)\to \mathbb C^*\]
such that for any $g\in \pi_1(M)$ seen as isometry of $(\widetilde M, \widetilde\omega)$, one has $g\cdot\widetilde \sigma=\chi(g) \widetilde\sigma$.  We can normalize $\widetilde\sigma$ such that $\widetilde\sigma^m=\widetilde\Omega$ where $\widetilde \Omega=\widetilde \pi^*\Omega$ since both tensors are parallel. Since the latter is invariant under the action of $\pi_1(M)$, we infer that for any $g\in \pi_1(M)$, one has
\[\widetilde\Omega=g\cdot \widetilde\Omega= g\cdot \widetilde\sigma^m  =\chi(g)^m \cdot \widetilde\Omega\]
so that $\chi$ takes values in the finite group of $m$-th roots of unity. The kernel of $\chi$ has finite index and yields a finite \'etale cover $S\to  M$ which admits a symplectic parallel $2$-form. This contradicts Claim~\ref{no p forms 2}, which ends the proof of the theorem.
\end{proof}

Next we investigate the geometric properties of the universal cover of $X\setminus (D_1+D_2)$.  

\begin{thm}
Let $\widetilde{M}$ denote the universal cover of $M= X\setminus (D_1+D_2)$, and let $\widetilde{g}_{\rm CL}$ denote the complete Calabi-Yau metric obtained from pulling back $g_{\rm CL}$.  Then we have the following
\begin{itemize}
\item The tangent cone at infinity of $(\widetilde{M}, \widetilde{g}_{\rm CL})$ is $(\mathbb{R}^2_{>0})_{(\widehat{y}_1, \widehat{y}_2)} \times \mathbb{R}_{\tau}$ equipped with the warped product metric
\[
\sum_{i,j=1}^2 u_{ij}(\overline y) d\overline{y_i}\otimes d\overline y_j+(u_{11}+u_{22}-2u_{12})(\overline y)d \overline \tau^2
\]
\item $(\widetilde{M}, \widetilde{g}_{\rm CL})$ has volume growth
\[
{\rm vol}(B_{\widetilde{g}_{\rm CL}}(p_0, R)) \sim R^{\frac{6n}{n+2}}
\]
\end{itemize}
\end{thm}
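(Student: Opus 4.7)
The plan is to reduce to the explicit model metric $g_0$ in the generic asymptotic region, identify which circle directions unwrap under the universal covering $\widetilde\pi \colon \widetilde M \to M$, and then perform a homogeneous rescaling to read off both the tangent cone and its volume. By Corollary~\ref{cor pi1}, $\pi_1(M)$ is a finite extension of $\mathbb Z$, the $\mathbb Z$-factor being generated (up to finite index) by the monodromy loop of $\log f$. In the coordinates $z_i = e^{-(y_i+\sqrt{-1}\theta_i)}$ on the total space of $E_Z$, this monodromy corresponds to a rotation in the direction $\theta_1-\theta_2$ (for $d_1=d_2$, and a suitable integral linear combination otherwise). Setting $\tau=\theta_1-\theta_2$ and $\psi=\theta_1+\theta_2$, the cover $\widetilde\pi$ unwraps $\tau$ to all of $\mathbb R$ while $\psi$ stays on a circle of length of order $d$, so that $\widetilde M$ at infinity is modeled on $\mathbb R^2_{>0}\times \mathbb R_\tau\times S^1_\psi\times Z$.

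Next, I would rewrite $\sum u_{ij}\,d\theta_i\,d\theta_j$ in the $(\tau,\psi)$ variables, giving
\[
\tfrac14(u_{11}+u_{22}+2u_{12})\,d\psi^2+\tfrac14(u_{11}+u_{22}-2u_{12})\,d\tau^2+\tfrac12(u_{11}-u_{22})\,d\psi\,d\tau.
\]
Since $u$ is homogeneous of degree $\tfrac{n+2}{n}$ in $y_1+y_2$, the tensor $u_{ij}$ is homogeneous of degree $\tfrac{2-n}{n}$ and geodesic distance grows like $R\sim|y|^{\frac{n+2}{2n}}$. Introducing rescaled coordinates $\widehat y_i=y_i R^{-\frac{2n}{n+2}}$ and $\overline\tau=\tau/(2R^{\frac{2n}{n+2}})$, a direct computation using homogeneity shows that $R^{-2}\widetilde g_0$ restricted to the $(\widehat y,\overline\tau)$-block converges, as $R\to\infty$, to $\sum u_{ij}(\widehat y)\,d\widehat y_i\otimes d\widehat y_j+(u_{11}+u_{22}-2u_{12})(\widehat y)\,d\overline\tau^2$, whereas the $\psi$-block, the $g_Z$-block and the cross term $d\psi\,d\tau$ all rescale by a strictly negative power of $R$ and therefore collapse. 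This identifies the claimed tangent cone.

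Two technical issues remain. First, the exponential decay of $g_{\rm CL}-g_0$ in the generic region (as in \cite{HSVZ,CL}) ensures that replacing $g_0$ by $g_{\rm CL}$ introduces errors that vanish after the rescaling. Second, in the non-generic regions $x_1\gg x_2$ or $x_2\gg x_1$, where $g_{\rm CL}$ is only quasi-isometric to the Tian-Yau type model~\eqref{non-generic asymp g0}, one must check that after the same rescaling these regions correspond to the boundary strata $\widehat y_1\to 0$ or $\widehat y_2\to 0$ of the proposed tangent cone and that they introduce no new ends. This is expected to be the main obstacle: it should follow from a comparison with the Tian-Yau tangent cone on $D_i\setminus D_j$, which is known to collapse onto a half-line, together with the observation that the $\tau$-loop still unwraps in these regions.

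For the volume growth, since the volume form of $\widetilde g_{\rm CL}$ is the pullback of that of $g_{\rm CL}$, Fubini in $\tau$ gives
\[
\mathrm{vol}\bigl(B_{\widetilde g_{\rm CL}}(p_0,R)\bigr)\sim \mathrm{vol}\bigl(B_{g_{\rm CL}}(\widetilde\pi(p_0),R)\bigr)\cdot\bigl(\text{length of the $\tau$-range within radius }R\bigr).
\]
The first factor is $R^{\frac{4n}{n+2}}$ by \cite{CL}, while the second factor is of order $R^{\frac{2n}{n+2}}$ by the scaling introduced in the second paragraph. This yields $\mathrm{vol}(B_{\widetilde g_{\rm CL}}(p_0,R))\sim R^{\frac{6n}{n+2}}$, as claimed.
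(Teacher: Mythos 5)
Your overall strategy is the same as the paper's: identify the direction that unwraps, pass to the model metric $\widehat g_0$ written in the variables $(y_1,y_2,\widehat\tau,\theta)$, rescale by homogeneity so that the $\theta$-, $g_Z$- and cross terms collapse, and count the volume by a product over the deck direction. The two places where your write-up has genuine gaps are precisely the two places where the paper has to do real work. First, the assertion that ``the length of the $\tau$-range within radius $R$ is of order $R^{\frac{2n}{n+2}}$'' is not a consequence of the scaling alone; it is exactly the two-sided ball comparison of Claim~\ref{claim balls}, namely $B_{\widehat g_0}(\widehat p_0,C^{-1}R)\subset\{|y|+|\widehat\tau|\le R^{\frac{2n}{n+2}}\}\subset B_{\widehat g_0}(\widehat p_0,CR)$. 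The upper inclusion requires a lower bound on the $d\widehat\tau$-coefficient along geodesics (using $|y|=O(R^{\frac{2n}{n+2}})$ on any geodesic from the base point, so $\widehat g_0\gtrsim R^{\frac{2(2-n)}{n+2}}d\widehat\tau^2$), and the lower inclusion requires exhibiting explicit connecting paths of length $O(R)$. Note also that the second factor in your ``Fubini'' must be the range of the \emph{coordinate} $\widehat\tau$ (equivalently the number of deck translates reached), not the Riemannian length of the unwrapped fibre: at $|y|\sim R^{\frac{2n}{n+2}}$ that length is only $\sim R$, and the product with $R^{\frac{4n}{n+2}}$ would give the wrong exponent $\frac{5n+2}{n+2}$. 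The coordinate count is legitimate only because the volume form of $\widehat g_0$ is comparable to $dy_1\,dy_2\,d\widehat\tau\,d\theta\,d\mathrm{vol}_{g_Z}$ (the powers of $|y|$ cancel), a cancellation your argument uses implicitly but never states.

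Second, you flag the non-generic regions $x_1\gg x_2$, $x_2\gg x_1$ as ``the main obstacle'' and defer them to an expected comparison with the Tian--Yau tangent cone; this is left unresolved, and your volume upper bound does not address these regions at all, even though a priori a ball of radius $R$ could pick up extra volume or extra winding there. The paper's resolution is different and more elementary: it replaces the local angular coordinate $\theta_1-\theta_2$ by the globally defined function $\widehat\tau=\pi\,\mathrm{Re}\,\widehat f$ on the $\mathbb Z$-cover and introduces the distance-like function $\widehat\rho=\big(\widehat x_1^2+\widehat x_2^2+\widehat\tau^2+1\big)^{\frac{n+2}{4n}}$, proving $|d\widehat\rho|^2_{g_{\rm CL}}\le C$ via the estimate $|d\widehat\tau|^2_{g_{\rm CL}}=O(|y|^{\frac{n-2}{n}})$, which in the non-generic region comes from \cite[Equation (38)]{CL} together with the decay estimates of \cite[Theorem 5.1]{CL}, not from any tangent-cone statement for the Tian--Yau metric; the inclusion $B_{\widehat g_{\rm CL}}(\widehat p_0,R)\subset\{\widehat\rho\le CR\}$ then yields the upper bound $\lesssim R^{\frac{6n}{n+2}}$. (A further minor difference: the paper carries all of this out on the intermediate cover $\widehat M$ and only at the end passes to $\widetilde M$ through the finite cover, via the elementary inclusion \eqref{ball comp}; your direct treatment of $\widetilde M$ is fine, but only once $\widehat\tau$ is defined globally rather than through coordinates near $Z$.)
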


\begin{proof}
Let us first analyze the asymptotic geometry of $(\widehat M, \pi^*g_{\rm CL})$. 

\smallskip

As before, we work in the euclidean neighborhood $U$ of a point $q\in Z$. So we give ourselves $y_1,y_2>0$ and consider the loop \[\gamma_{y,q}(t)=(e^{-(y_1+2\pi\sqrt{-1}t)}, e^{-(y_2-2\pi\sqrt{-1}t)},q), \quad t\in [0,1].\] It represents a generator of the Galois group of the cover $\pi$ and it lifts to a path $\widetilde \gamma_{y,q}(t):=(\gamma_{y,q}(t), \frac{1}{2\pi\sqrt{-1}}((y_2-y_1)+4\pi \sqrt{-1}t))$ in $\widehat M$ with different endpoints. Recall that $e^{2\pi \sqrt{-1} \widehat f(\hat x)}=\frac{s_1}{s_2}(\pi(\hat x))$ so that
\[\pi^*d(\theta_2-\theta_1)=d(2\pi \, \mathrm{Re} \widehat f)\]
becomes a globally defined exact one-form on $\widehat{M}$. Let us define the variables
\[\widehat \tau:=\pi \, \mathrm{Re} \widehat f, \quad \mbox{and} \quad \theta:=\frac12(\theta_1+\theta_2).\] 
If we set $u_{ij}:=\frac{\partial^2 u}{\partial x_i \partial x_j}$, and $\widehat g_0:=\pi^*g_0$, we get
{\small
\begin{eqnarray}
\label{asymptotic g0t}
\widehat g_0&=&\sum_{i,j=1}^2 u_{ij} dy_i\otimes dy_j+(u_{11}+u_{22}-2u_{12})d\widehat \tau^2+\\
&&+(u_{11}+u_{22}+2u_{12})d\theta^2+2(u_{22}-u_{11})d\theta\otimes d\widehat \tau +(\sum_{i=1}^2 \frac{\partial u}{\partial x_i}) g_Z. \nonumber 
\end{eqnarray}}
where the above functions $u_i, u_{ij}, y_i$ (resp. $1$-form $d\theta$) on $U$ are identified with their pullback to $\pi^{-1}(U)$. 

Let us fix a point $\widehat p_0\in \widehat M$ and denote by $p_0=\pi(\widehat p_0)$ its image by $\pi$. We have the following. 

\begin{claim}
\label{claim balls}
There exists $C>0$ such that for all $R\ge 1$, we have
\begin{equation}
\label{ball comp 0}
B_{\widehat g_0}(\widehat p_0, C^{-1}R) \subset \left( |y|+|\widehat\tau| \le R^{\frac{2n}{n+2}} \right) \subset B_{\widehat g_0}(\widehat p_0, CR).
\end{equation}
\end{claim}

\begin{proof}[Proof of Claim~\ref{claim balls}]
We first observe that $\pi( B_{\widehat g_0}(\widehat p_0, R))= B_{ g_0}( p_0, R)$. Combining the latter and \eqref{asymptotic g0}, we see that if $\widehat p \in B_{\widehat g_0}(\widehat p_0, R)$ we have necessarily $|y|(\widehat p)=O( R^{\frac{2n}{n+2}})$. Therefore, along a geodesic $\gamma$ joining $\widehat p_0$ and $\widehat p$, we have up to constant
\[\widehat g_0 \ge \inf_{q\in \gamma} |y|(q)^{\frac 2n-1} d\widehat\tau^2\ge R^{\frac{2(2-n)}{n+2}}d\widehat \tau^2\]
so that $|\widehat\tau|(\widehat p)= O(R^{\frac{2n}{n+2}})$. This shows the left inclusion in the claim. The right inclusion is easily checked as follows. Pick $\widehat p=(y_1, y_2, \widehat \tau, \theta, \underline z) \in ( |y|+|\widehat\tau| \le R^{\frac{2n}{n+2}})$ with $\underline z\in Z$. The point is that one can connect $\widehat p$ to $(0,0,0, \underline z)$ with a path of length $O(R)$, from which the claim follows. In order to do so, we concatenate the following paths defined for $t\in [0, 1]$; $\gamma_1(t)=(tR^{\frac{2n}{n+2}}, tR^{\frac{2n}{n+2}}, 0, 0, \underline z)$ which has length $O((R^{\frac{n}{n+2}})^{\frac 2n-1}R^{\frac{2n}{n+2}})=O(R)$,  $\gamma_2(t)=(R^{\frac{2n}{n+2}}, R^{\frac{2n}{n+2}}, t\widehat\tau, t\theta, \underline z)$ which has length $O((R^{\frac{n}{n+2}})^{\frac 2n-1}|\widehat\tau|)=O(R)$ and finally $\gamma_3(t)=(R^{\frac{2n}{n+2}}+t(y_1-R^{\frac{2n}{n+2}}), R^{\frac{2n}{n+2}}+t(y_2-R^{\frac{2n}{n+2}}), \widehat\tau, \theta, \underline z)$ whose length is of order $O(R)$ since $|y|\le R^{\frac{2n}{n+2}}$, as one sees similarly to for $\gamma_1$. 
\end{proof}

Set $\overline{y_i}:=R^{-\frac{2n}{n+2}}y_i$ and $\overline \tau :=R^{-\frac{2n}{n+2}} \widehat\tau$. Since $u_{ij}(y)=R^{-\frac{2(n-2)}{n+2}}u_{ij}(\overline y)$, we have 
\[R^{-2}u_{ij}(y)dy_i\otimes dy_j=u_{ij}(\overline y)d\overline{y_i}\otimes d\overline y_j, \quad R^{-2}u_{ij}(y) d\widehat\tau^2= u_{ij}(\overline y) d\overline \tau^2 \]
as well as 
{\small
\[R^{-2}u_{ij}=O(R^{-\frac{4n}{n+2}}),\quad R^{-2}u_{ij}d\widehat\tau = O(R^{-\frac{2n}{n+2}})d\overline \tau, \quad  R^{-2} u_i =O(R^{\frac{4}{n+2}-2})=o(1)\]
}
\noindent
uniformly on the ball $B_{\widehat g_0}(\widehat p_0, R)$. Therefore, the last three terms in the expression of $\widehat g_0$ given in \eqref{asymptotic g0t} are collapsed when taking the tangent cone at infinity. That is, when $R\to +\infty$, the balls $(B_{\widehat g_0}(\widehat p_0, R), R^{-2}\widehat g_0)$ converge to the ball of radius one in $(\mathbb R_{>0})^2\times \mathbb R$ with respect to the warp metric 
\begin{equation}
\label{asymp cone} \sum_{i,j=1}^2 u_{ij}(\overline y) d\overline{y_i}\otimes d\overline y_j+(u_{11}+u_{22}-2u_{12})(\overline y)d \overline \tau^2. 
\end{equation}

From the expression of $\widehat g_0$, we also see that its volume form is comparable to 
\[|y|^{\frac 2n-1} dy_1d y_2 \cdot |y|^{\frac 2n-1} d\widehat\tau d\theta \cdot |y|^{\frac {2(n-2)}n} d\mathrm{vol}_{g_Z}=dy_1d y_2 d\widehat\tau  d\theta d\mathrm{vol}_{g_Z}.\]
Combined with Claim~\ref{claim balls}, this yields  
\[\mathrm{vol}(B_{\widehat g_0}(\widehat p_0, R)) \gtrsim R^{\frac{6n}{n+2}} \quad \mbox{as} \,\, R\to +\infty.\]
Since $g_{\rm CL}$ is asymptotic to $g_0$, with polynomial decay by \cite[Theorem 5.1]{CL}, we deduce the first point, and the lower bound
\[\mathrm{vol}(B_{\widehat g_{\rm CL}}(\widehat p_0, R)) \gtrsim R^{\frac{6n}{n+2}} \quad \mbox{as} \,\, R\to +\infty\]

To see the upper bound we make the following observation.  Consider the function
\[
\widehat{\rho} := \left(\widehat x_1^2+\widehat x_2^2+\widehat\tau^2+1\right)^{\frac{n+2}{4n}}.
\]
where $\widehat x_i=\pi^*(-\log |s_i|_{h_{\mathcal L}})$. 
We claim that there is a uniform constant $C$ such that
\begin{equation}
\label{distance like function}
|d\widehat{\rho}|_{g_{\rm CL}}^2 \leq C
\end{equation}
where the norm is taken with respect to the complete Calabi-Yau metric.  It suffices to prove the bound in a neighborhood of $\infty$, where it follows as in \cite[Lemma 4.14]{CL}.  The only estimate we need to observe is the bound
\[
|d\widehat\tau|_{\widehat g_{\rm CL}}^2 = O(|y|^{\frac{n-2}{n}}).
\]
Above the generic region, this estimate follows from homogeneity and equation~\eqref{asymptotic g0t}, while in the non-generic region the estimate follows from \cite[Equation (38)]{CL} and the decay estimates \cite[Theorem 5.1]{CL}.  Thanks to~\eqref{distance like function}, $\widehat{\rho}$ can be viewed as a distance-like function.  Indeed, it follows from~\eqref{distance like function} that
\[
B_{\widehat{g}_{\rm CL}}(\widehat p_0, R) \subset \{ \widehat{\rho} \leq CR\}
\]
for some constant $C$.  Now it suffices to observe that \[
\int_{B_{\widehat{g}_{\rm CL}}(\widehat{p_0}, R)} (\sqrt{-1})^{n^2}\wom \wedge \overline{\wom} \lesssim\int_{\{ \widehat{\rho} \leq CR\}} dy_1d y_2 d\widehat\tau  \sim R^{\frac{6n}{n+2}}.
\]

\bigskip

It now remains to extend the above picture for $(\widehat M, \widehat g_{\rm CL})$ to $(\widetilde M, \widetilde g_{\rm CL})$. Recall from Section~\ref{sec:covers} that there exists a finite cover $p:\widetilde M\to \widehat M$ of degree $d=\mathrm{index}(D_1)$. Given $\widetilde x_0\in \widetilde M$, set $\widehat x_0=p(\widetilde x_0)$. It is elementary to see that 
\begin{equation}
\label{ball comp}
B_{\widetilde{g}_{\rm CL}}(\widetilde x_0, R)\subset p^{-1}( B_{\widehat{g}_{\rm CL}}(\widehat x_0, R)) \subset B_{\widetilde{g}_{\rm CL}}(\widetilde x_0, R+s)
\end{equation}
where $s=\max \{d_{\widetilde{g}_{\rm CL}}(\widetilde x_0, \gamma\cdot \widetilde x_0); \, \gamma \in \mathrm{Gal}(p)\}$.  Combined with the the volume estimate $\mathrm{vol}(B_{\widehat{g}_{\rm CL}}(\widehat x_0, R))\sim R^{\frac{6n}{n+2}}$ previously established, it implies that $\mathrm{vol}(B_{\widetilde{g}_{\rm CL}}(\widetilde x_0, R))\sim R^{\frac{6n}{n+2}}$ as well. 

Set $\widetilde \tau:=p^*\widehat \tau$. It follows from \eqref{ball comp 0} and \eqref{ball comp} that $B_{\widetilde g_{\rm CL}}(\widetilde x_0, R)$ is commensurable with $\big(|y|+|\widetilde \tau| \le R^{\frac{2n}{n+2}}\big)$. By pulling back \eqref{asymptotic g0t} by $p$ and applying the same reasoning as above, one sees that the asymptotic cone of $(\widetilde M, \widetilde g_{\rm CL})$ is still given by $(\mathbb R_{>0})^2\times \mathbb R$ endowed with to the warp metric \eqref{asymp cone}.
\end{proof}

\subsection{The universal cover cannot be compactified}
\label{non qp}

In this section, we will consider two families of examples where $X=\mathbb P^n$ (for $n\ge 3$) and $D_i$ ($i=1,2$) are smooth transverse hypersurfaces whose degree add up to $n+1$ but the universal cover $\widetilde M$ of $M=\mathbb P^n\setminus D$ cannot be holomorphically compactified into a compact complex space. In both cases, $\widetilde M$ carries the complete Calabi-Yau metric constructed by Collins-Li \cite{CL} but its $n$-th homology space $H_n(\widetilde M, \mathbb Z)$ has infinite rank. This has to be compared with the four-dimensional hyperkähler example constructed by Anderson, Kronheimer and LeBrun  \cite{AKL}. 

\begin{thm}
\label{univ cover}
Let $n\ge 3$ be an integer and let $D_1, D_2$ be two smooth hypersurfaces of respective degree $d_1, d_2$ in $\mathbb P^n$ intersecting transversely such that $d_1+d_2=n+1$. Assume either 
\begin{itemize}
\item $(d_1,d_2)=(1,n)$, or
\item $n$ is odd and $d_1=d_2=\frac{n+1}{2}$.
\end{itemize}
\noindent
Then the universal cover $\widetilde M$ of $M:=\mathbb P^n\setminus (D_1\cup D_2)$ has infinite topological type. In particular, it is not biholomorphic to a Zariski open set in a compact complex manifold. 
\end{thm}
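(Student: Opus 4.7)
The plan is to produce an infinite $\mathbb{Q}$-linearly independent family in $H_n(\widetilde M, \mathbb{Q})$; this suffices, since any analytic Zariski open subset of a compact complex manifold has the homotopy type of a finite CW complex and hence finitely generated homology in each degree. Because the intermediate cover $\widetilde M \to \widehat M$ of Section~\ref{sec:covers} is finite étale, it is equivalent to show $\dim_{\mathbb{Q}} H_n(\widehat M, \mathbb{Q}) = \infty$, where $\widehat f : \widehat M \to \mathbb{C}$ is the infinite cyclic cover of the quasi-Albanese map $f : M \to \mathbb{C}^*$ obtained by base-change along $\mathbf{e} := e^{2\pi i \cdot}$.

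\textbf{Lefschetz singularities.} I first verify that in both cases, $f$ admits at least one critical value $c \in \mathbb{C}^*$ whose fiber has a unique ordinary double point in $M$. Case (2) is the content of Example~\ref{pencil}. In case (1), with $D_1 = \{x_0 = 0\}$ and $D_2 = \{F=0\}$ for $F$ generic of degree $n$ transverse to $D_1$, the map $f = x_0^n/F$ defines the pencil of degree-$n$ hypersurfaces $\{x_0^n = tF\}$; a singular point at $p \in f^{-1}(t)$ forces $\partial_j F(p)=0$ for $j \geq 1$, giving $(n-1)^n$ isolated such points in $\mathbb{P}^n$ by Bezout, and the Euler identity $\sum x_j \partial_j F = nF$ combined with smoothness of $D_2$ then shows $\partial_0 F(p) \neq 0$, whence $t = n x_0^{n-1}(p)/\partial_0 F(p) \in \mathbb{C}^*$ is a nonzero critical value and $p$ is an ODP lying in $M$. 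Fix such $c$ and let $\tilde c_k := \tfrac{1}{2\pi i}\log c + k$, $k \in \mathbb{Z}$, be its lifts under $\mathbf{e}$ — the critical values of $\widehat f$ above $c$ — with corresponding ODPs $p_k$ permuted transitively by the Galois generator $\sigma : z \mapsto z+1$.

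\textbf{Matching cycle and Galois translates.} At $t_* := \tilde c_0 + \tfrac 12$, the two straight-line parallel transports of the vanishing cycles from $p_0, p_1$ to $\widehat M_{t_*}$ project under $\mathbf{e}$ to the upper and lower semicircles around $c$ in $\mathbb{C}^*$; hence the two transported cycles in $H_{n-1}(\widehat M_{t_*}, \mathbb{Q})$ differ by the Picard--Lefschetz monodromy $T_\delta$ around $c$, which sends $[\delta]$ to some nonzero rational multiple $\mu[\delta]$. The rational chain $L := T_0 - \mu\, T_1$ built from the two Lefschetz thimbles therefore has vanishing boundary in $\widehat M_{t_*}$, producing a closed rational $n$-cycle $L \in H_n(\widehat M, \mathbb{Q})$ fibering over $[\tilde c_0, \tilde c_1]$ as sketched in the introduction. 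Setting $L_k := \sigma^k_* L$ yields a family of $n$-cycles supported in the consecutive strips $\widehat f^{-1}(\{z : \mathrm{Re}(z) \in [\mathrm{Re}(\tilde c_k), \mathrm{Re}(\tilde c_{k+1})]\})$.

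\textbf{Linear independence (the main obstacle) and conclusion.} The crux, which I expect to be the main technical difficulty, is showing that $\{L_k\}_{k \in \mathbb{Z}}$ is $\mathbb{Q}$-linearly independent in $H_n(\widehat M, \mathbb{Q})$. My plan is an intersection-theoretic argument: at each critical point $p_j$ construct a non-compact Lefschetz thimble $N_j$ emanating along a purely imaginary direction in $\mathbb{C}$ (so that $\widehat f(N_j)$ follows a proper ray $\tilde c_j - i\mathbb{R}_{\geq 0}$), yielding a Borel--Moore class $[N_j] \in H_n^{BM}(\widehat M, \mathbb{Q})$. A local computation in the ODP normal form $\widehat f = \sum z_j^2$ shows that the intersection pairing between compactly supported middle cycles and Borel--Moore middle cycles satisfies $L_k \cdot N_j = \delta_{jk} - \mu\, \delta_{j, k+1}$; this bidiagonal matrix recursively constrains any finite relation $\sum a_k L_k = 0$ to $a_j = \mu\, a_{j-1}$, forcing all $a_k = 0$. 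The delicate point is to control $N_j$ at infinity, since the fibers of $\widehat f$ are non-compact; carrying this out rigorously may require leaning on the Collins--Li asymptotics of $\widehat M$ (or, alternatively, computing $H_n(\widehat M, \mathbb{Q})$ via the Leray spectral sequence for $\widehat f$, using the vanishing-cycle description of the stalks of $R^{n-1}\widehat f_* \mathbb{Q}$ at the $\tilde c_k$). Once linear independence is established, $H_n(\widehat M, \mathbb{Q})$ is infinite dimensional, and the finite étale cover $\widetilde M \to \widehat M$ forces the same for $H_n(\widetilde M, \mathbb{Q})$; hence $\widetilde M$ has infinite topological type and cannot be biholomorphic to a Zariski open subset of a compact complex manifold.
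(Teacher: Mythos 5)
Your overall reduction (show $H_n(\widehat M,\mathbb Q)$ is infinite dimensional, then transfer to $\widetilde M$ through the finite cover, then invoke finiteness of the homotopy type of Zariski open sets) is the same as the paper's, and your unified Lefschetz treatment of the case $(d_1,d_2)=(1,n)$ is a reasonable variant (the paper instead quotes Libgober's wedge-of-spheres description of the complement of an affine hypersurface, which needs no genericity of $F$, whereas your ordinary-double-point count does). The genuine gap is exactly where you flag it: the linear independence of the translates $L_k$. Your Borel--Moore thimbles $N_j$, their properness and behaviour at infinity, and the local computation $L_k\cdot N_j=\delta_{jk}-\mu\,\delta_{j,k+1}$ are all deferred (``may require leaning on the Collins--Li asymptotics'', ``or, alternatively, \dots Leray''), so the proposal does not prove the theorem. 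The paper closes this step by a different and self-contained period argument: one first corrects the logarithmic volume form by a factor $\tau(\mathbf{e}(\widehat f))$ so that $\int_L\widehat\Omega\neq 0$ (Claim~\ref{int non zero}, which itself needs the local estimate $h(t)\sim\alpha t$ at the node), and then pairs a putative relation $\sum a_k[L_k]=0$ with the closed holomorphic forms $e^{s\widehat f}\,\widehat\Omega$; Galois equivariance gives $\bigl(\sum_k a_k e^{sk}\bigr)\int_L e^{s\widehat f}\widehat\Omega=0$ for all $s$, forcing $a_k=0$ (Claim~\ref{homology}). Nothing in your sketch plays the role of this nonvanishing period, and without it (or a completed intersection computation) the crucial step is missing.

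There is also a gap in the construction of the cycle $L$ itself. Under $\mathbf{e}$ the segment $[\tilde c_0,\tilde c_1]$ maps to the full circle of radius $|c|$ about the origin, not to small semicircles around $c$; hence the discrepancy between your two half-transports is the monodromy of a loop encircling $0$, which a priori involves every critical value of modulus at most $|c|$ and need not preserve the line spanned by the vanishing cycle. The paper removes this obstruction by first normalizing with a M\"obius transformation so that the only critical value in the closed unit disk is $1$, see \eqref{min norm}, and then comparing the long transport with a short arc around that unique critical value (Claim~\ref{cycle}); you need the analogous choice (e.g.\ take $c$ of minimal and uniquely attained modulus). Moreover, even granting $[\partial T_0]=\mu[\partial T_1]$ in $H_{n-1}(\widehat M_{t_*},\mathbb Z)$ (with $\mu=\pm1$ by Picard--Lefschetz, not an arbitrary rational), the chain $T_0-\mu T_1$ does not have zero boundary: its boundary is merely nullhomologous in the fiber, so a filling chain in $\widehat M_{t_*}$ (the paper's $\Lambda$) must still be added before one has a cycle, and that correction chain must then be tracked in whatever pairing you use to prove independence. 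These points are fixable, but as written both the construction of $L$ and, more importantly, the independence of its Galois translates remain unproved.
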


\begin{rem}
\label{rem univ cover}
Let us collect a few remarks. 
\begin{itemize}
\item The refined conclusion of the above result is that the middle homology group $H_n(\widetilde M,\mathbb Z)$ has infinite rank.  Moreover, our proof applies more generally to log Calabi-Yau pairs $(X,D)$ such that there is a fibration $f:X\setminus D\to\mathbb C^*$  where $f$ has at least fiber with an ordinary double point singularity. 
\item For $d_1=d_2=1$, we have $M\simeq \mathbb C^*\times \mathbb C^{n-1}$ hence $\widetilde M \simeq \mathbb C^n$ is a quasi-projective manifold.
\item In the second case and when $n\equiv -1 [4]$, there is an alternative, short proof which was communicated to us by J\'anos Koll\'ar, which we will explain at the end of Section~\ref{non qp}, cf Remark~\ref{rem kollar}. \end{itemize} 
\end{rem}

\begin{proof}
We treat both cases separately. \\

\noindent
{\bf 1. Case $(d_1,d_2)=(1,n)$.}

\medskip

In this case, $M$ is the complement in $\mathbb C^n$ of an affine hypersurface of degree $n$. It is elementary to derive from the proof of Lemma~\ref{lem fg} that the surjection $\mathbb Z^2\to \pi_1(M)$ induces an isomorphism in restriction to $\{(u,-u); u\in \mathbb Z^2\}\simeq \mathbb Z$. In particular, $\pi_1(M)\simeq \mathbb Z $ and $ \widetilde M \simeq \widehat M$.

Moreover, it is known that $M$ has the homotopy type of the wedge of spheres $S^1\vee S^n \vee \ldots \vee S^n$, cf \cite[Corollary~1.2]{Lib94}. In particular, $\widetilde M$ has the homotopy type of a line with a bouquet of $S^n$ attached at every integer, hence $H_n(\widetilde M, \mathbb Z)$ is not finitely generated.

\bigskip

\noindent
{\bf 2. Case $n$ odd, $d_1=d_2=\frac{n+1}{2}$.}

\medskip

{\it Quick refresher on Lefschetz fibrations.} Before going any further, let us recall some well-known constructions related to vanishing cycles on the standard Lefschetz fibration. Let $q(z):=\sum_{i=1}^n z_i^2$ be the standard quadratic form on $\mathbb C^{n}$ and set $V_t:=q^{-1}(t)$ for $t\in \mathbb C$. Inside $V_1$, one can construct a differentiable submanifold $L_1:=\{|x|^2=1, |y|^2=0\}\simeq S^{n-1}\subset V_1$ where $z_k=x_k+iy_k$. If $t\neq 0$, choose a square root $\sqrt{t}$ and set $L_t:=\sqrt{t}L_1 \subset V_t$ which is well-defined independently of the choice of the square root since $L_1$ is invariant under multiplication by $-1$.  
By adjunction, one sees easily that $\Omega:=dz_1\wedge \ldots \wedge dz_n$ induces a holomorphic trivialization $\Omega_t=\frac{\Omega}{q^*dt}$ of $K_{V_t}$ for any $t\neq 0$. Moreover, $\Omega_1|_{L_1}$ determines an orientation on $L_1$, hence $L_t$ is orientable for any $t\neq 0$. However, there is no canonical choice of an orientation for all the $L_t$'s. Given a one-dimension real submanifold $T\subset \mathbb C^*$, consider the real $n$-dimensional submanifold $L_T:=q^{-1}(T)$. It $T$ is included in a simply connected open subset of $\mathbb C^*$, then there is a well-defined square root on $T$ and we get an diffeomorphism $T\times L_1\to L_T$ given by $(t,z)\mapsto \sqrt{t}z$. In particular, $L_T$ is orientable. E.g. if $T\subset \mathbb R_{>0}$,
then $\sqrt{t}\Omega|_{L_T}$ is an orientation.

\medskip

Let us now go back to the proof of the theorem and let us consider the (finite) set $S\subset \mathbb C^*$ of all singular values of $f$. Since $D_1, D_2$ are general, each fiber of $f$ over such a singular value has a single ordinary double point singularity. Using dilations of $\mathbb C^*$, one can assume that $S\subset \{|z|\ge 1\}$ and $1\in S$. Using the Möbius transformation $z\mapsto \frac{2z}{z+1}$ of $\mathbb P^1$ which sends $\{|z| \ge 1\}$ to $\{\mathrm{Re}(z)\ge 1\}$, one can further assume that 
\begin{equation}
\label{min norm}
S\cap \{|z|\le 1\}=1.
\end{equation}

\medskip

We consider the segment $[0,1]\subset \mathbb C$ which is mapped to a loop around $0$ by $\be:=e^{2i\pi \cdot}$, meeting $S$ only at its endpoints thanks to \eqref{min norm}. Let us denote by $y_0$ (resp. $y_1$) the singular point of the fiber of $g$ over $0$ (resp. $1$). 

 Fix $0<\ep \ll 1$ so that over $B_{\mathbb C}(0,2\ep)$ (resp. $B_{\mathbb C}(1,2\ep)$) the map $\widehat f$ (resp. $1-\widehat f$) is given in local holomorphic coordinates near $y_0$ (resp. $y_1$) by the quadratic form $\sum_{k=1}^n z_k^2$. As recalled above, one can use the real coordinates $z_k=x_k+iy_k$ to construct for $t\in (0,\ep]$ the closed submanifold 
 \[L_t:=\{|x|^2=t, |y|^2=0\}\simeq S^{n-1}\subset \widehat M_t\]
  and similarly for  $t\in [1-\ep, 1)$, i.e. $L_t:=\{|x|^2=1-t, |y|^2=0\}$). In particular, $L_t$ gets contracted to $L_0:=y_0$ (resp. $L_1:=y_1$) as $t\to 0$ (resp. $t\to 1$). Also, the oriented closed manifolds $\Sigma_\ep:=\bigcup_{0\le t \le \ep } L_t$ (resp. $\Sigma_{1-\ep}:=\bigcup_{1-\ep \le t \le 1 } L_t$) are diffeomorphic to the closed euclidean ball $\bar B_{\mathbb R^n}(0, \sqrt{\ep})$.\\
  
 The basic idea is to construct a non-trivial cycle by ``parallel transporting" the $S^{n-1}$ vanishing cycle around the loop, and then gluing in back to itself.  The geometric construction is depicted in the Figure~\ref{fig 1} below.  The fact that this cycle is homologically non-trivial is not obvious, and is a central point of proof.
 
 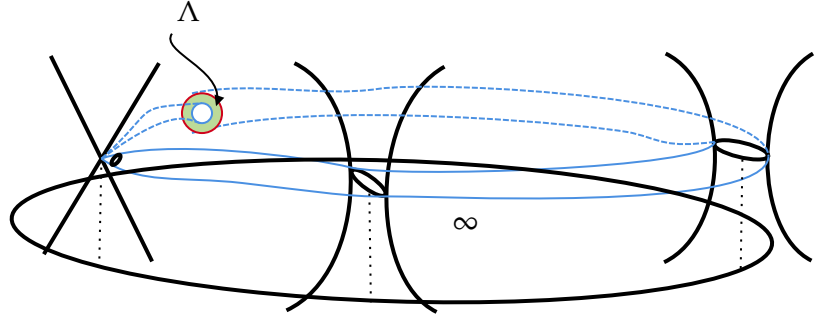
\begin{figure}[h]

\tikzset{every picture/.style={line width=0.75pt}} 

\begin{tikzpicture}[x=0.75pt,y=0.75pt,yscale=-1,xscale=1]

\draw [line width=1.5]    (99.51,151.21) -- (150.09,254.57) ;
\draw [line width=1.5]    (153.54,154.84) -- (96.06,250.93) ;
\draw  [dash pattern={on 0.84pt off 2.51pt}]  (124.8,202.89) -- (123.65,259.01) ;
\draw [line width=1.5]    (221.13,154.84) .. controls (260.29,173.98) and (258.16,260.06) .. (217.15,279.19) ;
\draw [line width=1.5]    (296.05,156.61) .. controls (258.67,177.65) and (257.5,262.78) .. (292.09,280) ;
\draw  [line width=1.5]  (251.17,212.71) .. controls (248.46,209.29) and (249.39,207.55) .. (253.24,208.81) .. controls (257.09,210.08) and (262.4,213.88) .. (265.12,217.29) .. controls (267.83,220.71) and (266.9,222.45) .. (263.05,221.19) .. controls (259.2,219.92) and (253.88,216.12) .. (251.17,212.71) -- cycle ;
\draw [line width=1.5]    (406.43,150) .. controls (439.38,166.15) and (439.38,238.82) .. (405.66,254.97) ;
\draw [line width=1.5]    (480,150) .. controls (449.35,167.76) and (450.11,239.63) .. (479.23,254.16) ;

\draw  [line width=1.5]  (454.81,198.45) .. controls (450.2,195.77) and (441.67,193.6) .. (435.74,193.6) .. controls (429.82,193.6) and (428.75,195.77) .. (433.35,198.45) .. controls (437.95,201.12) and (446.49,203.29) .. (452.41,203.29) .. controls (458.34,203.29) and (459.41,201.12) .. (454.81,198.45) -- cycle ;
\draw [color={rgb, 255:red, 74; green, 144; blue, 226 }  ,draw opacity=1 ]   (124.8,202.89) .. controls (154.31,191.99) and (227.69,201.95) .. (249.33,207.33) .. controls (270.98,212.71) and (428.66,208.14) .. (430.95,194.41) ;
\draw [color={rgb, 255:red, 74; green, 144; blue, 226 }  ,draw opacity=1 ]   (124.8,202.89) .. controls (143.58,216.21) and (164.64,211.63) .. (195.11,215.13) .. controls (225.57,218.63) and (248.79,222.41) .. (266.96,221.86) .. controls (285.13,221.32) and (457.01,230.75) .. (457.78,201.68) ;
\draw [color={rgb, 255:red, 74; green, 144; blue, 226 }  ,draw opacity=1 ] [dash pattern={on 2.25pt off 0.75pt}]  (170,190) .. controls (227,171.5) and (376,186.5) .. (390,190) .. controls (404,193.5) and (406.43,197.64) .. (430.95,194.41) ;
\draw [color={rgb, 255:red, 74; green, 144; blue, 226 }  ,draw opacity=1 ] [dash pattern={on 2.25pt off 0.75pt}]  (170,170) .. controls (200,163.5) and (233.59,172.15) .. (263.03,167.76) .. controls (292.47,163.37) and (456.24,171.8) .. (457.78,201.68) ;
\draw  [line width=1.5]  (95.71,219.54) .. controls (137.39,202.96) and (249.12,198.32) .. (345.26,209.19) .. controls (441.4,220.06) and (485.54,242.32) .. (443.86,258.9) .. controls (402.18,275.49) and (290.45,280.13) .. (194.31,269.26) .. controls (98.17,258.39) and (54.02,236.13) .. (95.71,219.54) -- cycle ;
\draw  [line width=1.5]  (131.05,202.5) .. controls (129.93,203.83) and (129.47,205.34) .. (130.04,205.87) .. controls (130.6,206.39) and (131.97,205.75) .. (133.09,204.42) .. controls (134.21,203.09) and (134.66,201.59) .. (134.1,201.06) .. controls (133.53,200.53) and (132.17,201.18) .. (131.05,202.5) -- cycle ;
\draw  [line width=3] [line join = round][line cap = round] (295.31,237.2) .. controls (295.31,237.2) and (295.31,237.2) .. (295.31,237.2) ;
\draw  [dash pattern={on 0.84pt off 2.51pt}]  (259.29,275.16) -- (258.53,220.25) ;
\draw  [dash pattern={on 0.84pt off 2.51pt}]  (444.75,203.29) -- (443.89,258.89) ;
\draw  [color={rgb, 255:red, 208; green, 2; blue, 27 }  ,draw opacity=1 ][fill={rgb, 255:red, 188; green, 218; blue, 152 }  ,fill opacity=1 ] (165,180) .. controls (165,174.48) and (169.48,170) .. (175,170) .. controls (180.52,170) and (185,174.48) .. (185,180) .. controls (185,185.52) and (180.52,190) .. (175,190) .. controls (169.48,190) and (165,185.52) .. (165,180) -- cycle ;
\draw [color={rgb, 255:red, 74; green, 144; blue, 226 }  ,draw opacity=1 ] [dash pattern={on 2.25pt off 0.75pt}]  (124.8,202.89) .. controls (154.5,176) and (138.5,177.5) .. (175,175) ;
\draw  [color={rgb, 255:red, 74; green, 144; blue, 226 }  ,draw opacity=1 ][fill={rgb, 255:red, 255; green, 255; blue, 255 }  ,fill opacity=1 ] (170,180) .. controls (170,177.24) and (172.24,175) .. (175,175) .. controls (177.76,175) and (180,177.24) .. (180,180) .. controls (180,182.76) and (177.76,185) .. (175,185) .. controls (172.24,185) and (170,182.76) .. (170,180) -- cycle ;
\draw [color={rgb, 255:red, 74; green, 144; blue, 226 }  ,draw opacity=1 ] [dash pattern={on 2.25pt off 0.75pt}]  (124.8,202.89) .. controls (164.8,172.89) and (169.5,185.5) .. (175,185) ;
\draw    (160,140) .. controls (153.86,148.98) and (184.33,157.51) .. (182.64,173.61) ;
\draw [shift={(182,176.5)}, rotate = 288.43] [fill={rgb, 255:red, 0; green, 0; blue, 0 }  ][line width=0.08]  [draw opacity=0] (5.36,-2.57) -- (0,0) -- (5.36,2.57) -- cycle    ;

\draw (298.69,230.63) node [anchor=north west][inner sep=0.75pt]   [align=left] {$\displaystyle \infty $};
\draw (161,122) node [anchor=north west][inner sep=0.75pt]   [align=left] {$\displaystyle \Lambda $};

\end{tikzpicture}
\caption{Geometric depiction of the construction of the $S^{n}$ cycle arising from parallel transporting the $S^{n-1}$ vanishing cycle around a loop containing the point $\infty$.  The shaded green region denotes the cycle $\Lambda$. }\label{fig 1}
 \end{figure}

It follows from Ehresmann theorem that the proper family $\bar f:X'\to \mathbb P^1$ defined in \eqref{extension} is $C^\infty$-trivial over the contractible base $\{\be(t), t\in [\ep, 1-\ep]\}\subset \mathbb C^*$. Moreover, one can arrange that the smooth identification of the fibers preserves the smooth hypersurface $E\subset X'$ since the latter is transverse to the fibers. Therefore the family $\widehat f: \widehat M \to \mathbb C$ is also $C^\infty$-trivial over the segment $[\ep, 1-\ep]\subset \mathbb C$. This allows us to extend to parallel transport the oriented manifold $L_{\ep}$ along $[\ep, 1-\ep]$ to obtain an oriented submanifold $\Sigma_{\ep, 1-\ep}$ with boundary diffeomorphic to a cylinder $S^{n-1}\times [\ep, 1-\ep]$. Consider 
\[L^\circ:=\Sigma_\ep \cup \Sigma_{\ep, 1-\ep}\cup \Sigma_{1-\ep}\] which is topologically the concatenation of two closed balls $\bar B_{\mathbb R^n}(0, \sqrt{\ep})$ and a cylinder. By construction, the closed ball $\Sigma_\ep$ near $y_0$ can be glued to the the "$\ep$-end" of the cylinder  $S^{n-1}\times [\ep, 1-\ep]$ along $S^{n-1}$, but there is no reason why it would be the case with the "$(1-\ep)$-end" of the cylinder and the closed ball $\Sigma_{1-\ep}$ near $y_1$. In other words, the parallel transport of $L_\ep$ along $[\ep, 1-\ep]$ need not coincide with $L_{1-\ep}$. So defined, $L^\circ$ is naturally a singular chain but its boundary $\partial L^\circ$, which is supported on $\widehat M_{1-\ep}$ may not be zero. In the next lemma, we show that $\partial L^\circ$ is realized as the boundary of a singular chain supported in the fiber $\widehat M_{1-\ep}$.

\begin{claim}
\label{cycle}
There exists a smooth singular chain $\Lambda \subset \widehat M_{1-\ep}$ such that, up to reversing the orientation of $\Sigma_{1-\ep}$, the singular chain $L:=L^\circ + \Lambda$ defines an $n$-cycle in $\widehat M$. 
\end{claim}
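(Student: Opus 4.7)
The plan is to show that $\partial L^\circ$, viewed as a cycle in $\widehat M_{1-\ep}$, is null-homologous after an appropriate choice of orientation on $\Sigma_{1-\ep}$, and then to take any smooth singular chain filling it as $\Lambda$. First I would unravel $\partial L^\circ$: the $L_\ep$-boundaries of $\Sigma_\ep$ and of the cylinder $\Sigma_{\ep,1-\ep}$ cancel with their natural orientations, so $\partial L^\circ$ is the algebraic sum in $C_{n-1}(\widehat M_{1-\ep})$ of two $(n-1)$-spheres: the parallel transport $L'_{1-\ep}$ of $L_\ep$ inherited from $\Sigma_{\ep, 1-\ep}$, and the boundary sphere $L_{1-\ep}$ of $\Sigma_{1-\ep}$, which comes with a sign $\eta\in\{\pm 1\}$ that flips under reversal of the orientation of $\Sigma_{1-\ep}$. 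It therefore suffices to choose $\eta$ so that $[L'_{1-\ep}] + \eta\,[L_{1-\ep}] = 0$ in $H_{n-1}(\widehat M_{1-\ep};\Z)$.

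The key observation is that both boundary spheres are vanishing cycles for the same node of the original Lefschetz pencil. Indeed, the normalization $S\cap\{|z|\le 1\}=\{1\}$ ensures that the singular values $0$ and $1$ of $\widehat f$ both lie above $1\in\C^*$, so $y_0$ and $y_1$ are both lifts via $\pi$ of the unique node $q$ of $M_1$. Under the canonical identification of $\widehat M_{1-\ep}$ with $M_{e^{-2\pi i \ep}}$ coming from $\widehat M = M\times_{\C^*}\C$, the sphere $L_{1-\ep}$ is the vanishing cycle at $q$ obtained by parallel transport along the short arc $t\mapsto\be(t)$, $t\in[1-\ep,1]$, while $L'_{1-\ep}$ is the parallel transport of $L_\ep$ along $\be([\ep,1-\ep])$, i.e.\ along the long arc around $1$ in $\C^*$. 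The concatenation of these two arcs is a loop $\gamma$ based at $e^{-2\pi i\ep}$ which, by the normalization on $S$, encircles only the singular value $1\in\C^*$, so the Picard--Lefschetz formula applies without interference from any other node.

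Since $n$ is odd, the vanishing cycle $\delta$ lives in even real middle dimension $n-1$; its self-intersection in $M_{e^{-2\pi i \ep}}$ equals $(-1)^{(n-1)/2}\cdot 2$ and the Picard--Lefschetz monodromy around $\gamma$ acts by $\delta\mapsto -\delta$. Comparing the two parallel transports yields $[L'_{1-\ep}] = -[L_{1-\ep}]$ in $H_{n-1}(\widehat M_{1-\ep};\Z)$. Fixing the orientation of $\Sigma_{1-\ep}$ so that $\eta=1$ (which is the content of the phrase ``up to reversing the orientation of $\Sigma_{1-\ep}$''), we conclude $[\partial L^\circ]=0$, so $\partial L^\circ$ bounds a singular chain $\Lambda\subset\widehat M_{1-\ep}$ which can be taken smooth by a standard simplicial-to-smooth approximation. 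Then $L := L^\circ + \Lambda$ is the desired $n$-cycle in $\widehat M$.

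The main obstacle is Step 2: rigorously identifying $L'_{1-\ep}$ and $L_{1-\ep}$ as the two vanishing cycles for the common node $q$, attached via the two homotopy classes of paths from $e^{-2\pi i\ep}$ to $1$ whose difference generates $\pi_1(\C^*\setminus S,\,e^{-2\pi i\ep})$ in the class of the small loop around $1$, and then matching signs when applying Picard--Lefschetz. This requires careful bookkeeping of local models of $\widehat f$ near $y_0$ and $y_1$ and of the orientation conventions for the standard vanishing sphere; the freedom left by the orientation of $\Sigma_{1-\ep}$ is precisely what absorbs the residual sign ambiguity.
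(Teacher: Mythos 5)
Your strategy is essentially the one the paper uses: identify $\partial L^\circ$ with a signed sum of two $(n-1)$-spheres in $\widehat M_{1-\ep}$, recognize both as vanishing cycles attached to the single node of the fiber over $1\in\mathbb C^*$ along two different paths, conclude that they agree up to sign in $H_{n-1}(\widehat M_{1-\ep},\mathbb Z)$, and fill the null-homologous boundary by a smooth chain $\Lambda$, the residual sign being absorbed by the allowed reorientation of $\Sigma_{1-\ep}$. The unraveling of $\partial L^\circ$ and the final filling step are fine.

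The gap is in the monodromy bookkeeping. The two vanishing paths (the short clockwise arc from $1$ to $\be(1-\ep)$, and the long path $\be([0,1-\ep])$) differ in $\pi_1(\mathbb C^*\setminus S,\be(1-\ep))$ not by a small loop around $1$ alone: the comparison loop is essentially the whole unit circle, which also winds once around the puncture $0$ of the base $\mathbb C^*$. Your phrase ``encircles only the singular value $1$'' silently assumes that the monodromy of $f$ around $0$ acts trivially on the middle homology of the open fiber, which is not automatic for a fibration over $\mathbb C^*$. It is true here, but only because the pencil extends to $\bar f\colon X'\to\mathbb P^1$ with smooth fiber $D_1'$ over $0$, and because the $C^\infty$-trivializations can be chosen to preserve the exceptional divisor $E$, so that triviality of the monodromy descends to the open fibers $M_t=X'_t\setminus E$; without this input the Picard--Lefschetz computation does not determine the comparison. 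The paper handles precisely this point, and in a way that avoids Picard--Lefschetz and all sign conventions: it compares parallel transport along the long arc $\gamma_1=\be([\ep,1-\ep])$ with transport along the short half-circle $\gamma_2$ around $1$ lying inside the unit disk ($\mathrm{Re}<1$); by \eqref{min norm} the loop $\gamma_1\ast\bar\gamma_2$ bounds a contractible region over whose closure $\bar f$ is smooth, so the two transports coincide, while along $\gamma_2$ the explicit square-root family of vanishing spheres identifies the transported cycle with $\pm L_{\be(1-\ep)}$ as a submanifold. In particular your exact sign $[L'_{1-\ep}]=-[L_{1-\ep}]$ (which in any case depends on the choice of detour around $1$ and on orientation conventions) is not needed for the claim; what must be supplied is the triviality of the monodromy around $0$ via the compactified fibration.
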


\begin{proof}[Proof of Claim~\ref{cycle}]
Consider the following two paths $(\gamma_i)_{i=1, 2}$ in $\mathbb C^*$ connecting $\be(\ep)$ and $\be(1-\ep)$. The first one is $\gamma_1(t)=\be(t)$ for $t\in [\ep, 1-\ep]$ as considered before. As for the second one, consider the circle $\mathcal C$ centered at $1$ of radius $r:=|1-\be(\ep)|=|1-\be(1-\ep)|$ and choose a parametrization $(\gamma_2(t))_{t\in [0,1]}$ of $\mathcal C$ so that $(\gamma_2(t))_{t\in [\ep,1-\ep]}$ sweeps out the short half circle connecting $\be(\ep)$ and $\be(1-\ep)$ (in other words, $\mathrm{Re}(\gamma_2(t))<1$ for any $t\in [\ep, 1-\ep]$). The concatenation of $\gamma_1(t)$ and $\gamma_2(1-t)$ for $t\in [\ep, 1-\ep]$ yields a loop based at $\be(\ep)$ whose interior $U$ is contractible. The loops are represented in Figure~\ref{loops}.

\begin{figure}
\centering
\begin{tikzpicture}[x=0.75pt,y=0.75pt,yscale=-1,xscale=1]
\tikzset{every picture/.style={line width=0.75pt}} 

\draw [color={rgb, 255:red, 208; green, 2; blue, 27 } ,draw opacity=1 ] (226.03,161.51) .. controls (226.01,119.82) and (259.8,86.01) .. (301.49,85.99) .. controls (343.18,85.98) and (376.99,119.76) .. (377,161.46) .. controls (377.02,203.15) and (343.23,236.96) .. (301.54,236.97) .. controls (259.85,236.99) and (226.04,203.2) .. (226.03,161.51) -- cycle ;
\draw [draw opacity=0] (371.21,190.55) .. controls (369.52,190.85) and (367.78,191) .. (366,191) .. controls (349.43,191) and (336,177.57) .. (336,161) .. controls (336,144.43) and (349.43,131) .. (366,131) .. controls (367.24,131) and (368.46,131.07) .. (369.66,131.22) -- (366,161) -- cycle ; \draw [color={rgb, 255:red, 74; green, 144; blue, 226 } ,draw opacity=1 ] (371.21,190.55) .. controls (369.52,190.85) and (367.78,191) .. (366,191) .. controls (349.43,191) and (336,177.57) .. (336,161) .. controls (336,144.43) and (349.43,131) .. (366,131) .. controls (367.24,131) and (368.46,131.07) .. (369.66,131.22) ;

\draw (300,160) node [anchor=north west][inner sep=0.75pt] [font=\tiny] {$\bullet $};
\draw (374,158) node [anchor=north west][inner sep=0.75pt] [font=\tiny] {$\bullet $};
\draw (367,128) node [anchor=north west][inner sep=0.75pt] [font=\tiny] {$\bullet $};
\draw (368,187) node [anchor=north west][inner sep=0.75pt] [font=\tiny] {$\bullet $};
\draw (380,153) node [anchor=north west][inner sep=0.75pt] [font=\footnotesize] {$1$};
\draw (375,122.4) node [anchor=north west][inner sep=0.75pt] [font=\footnotesize] {$e( \varepsilon )$};
\draw (375,185.4) node [anchor=north west][inner sep=0.75pt] [font=\footnotesize] {$e( 1-\varepsilon )$};
\draw (307,153.4) node [anchor=north west][inner sep=0.75pt] [font=\footnotesize] {$0$};

\draw [color={rgb, 255:red, 208; green, 2; blue, 27 } ,draw opacity=1 ]  (210,153.4) node [anchor=north west][inner sep=0.75pt] [font=\footnotesize] {$\gamma_1$};

\draw [color={rgb, 255:red, 74; green, 144; blue, 226 } ,draw opacity=1 ] (335,125) node [anchor=north west][inner sep=0.75pt] [font=\footnotesize] {$\gamma_2$};

\end{tikzpicture}
\caption{The two loops} \label{loops}
\end{figure}
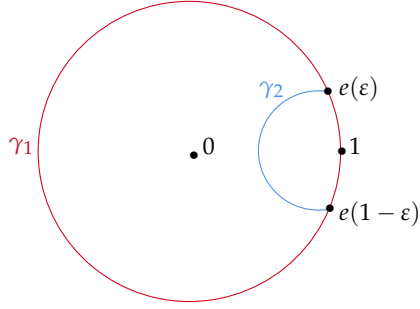

Moreover, by \eqref{min norm}, the family $\bar f$ is smooth over a small neighborhood $V$ of $\bar U$, hence it is $C^\infty$-trivial on $V$. This implies that the parallel transport operators $T_{\gamma_i}:H_n(M_{\be(\ep)}, \mathbb Z)\to  H_n(M_{\be(1-\ep)}, \mathbb Z)$ for $i=1,2$ coincide. 

Now, along $\gamma_2$, there is a well-defined square root which identify both the fibers and the vanishing cycles (as submanifolds). In particular, we have 
\[T_{\gamma_1}([L_{\be(\ep)}])=T_{\gamma_2}([L_{\be(\ep)}])=\pm [L_{\be(1-\ep)}]\quad \mbox{in} \quad H_n(M_{\be(1-\ep)}, \mathbb Z).\]
Now, $\partial L^\circ = T_{[\ep, 1-\ep]}(L_{\ep})- L_{1-\ep}\subset \widehat M_{1-\ep}$ is zero in 
$H_n(\widehat M_{1-\ep}, \mathbb Z)$ possibly after reversing the orientation on $\Sigma_{1-\ep}$. Therefore, one obtain a (smooth) singular $n$-chain $\Lambda$ in $\widehat M_{1-\ep}$ such that $\partial L^\circ = -\partial \Lambda$. 
\end{proof}

A given trivialization of $K_X+D_1+D_2$ induces a holomorphic volume form $\Omega$ on $M$; we denote by $\widehat \Omega:=\pi^*\Omega$ its pull back to $\widehat M$. By adjunction, write $\wom|_{L_t}= \wom_t \wedge dt$ and set $\Sigma^t:=L^\circ|_{\widehat M_t}$ for $t\in [0, 1]\setminus \{0, \ep, 1-\ep, 1\}$. We have
\[\int_L\wom= \int_0^1 \Big(\int_{\Sigma^t} \wom_t\Big) dt+\int_{\Lambda} \widehat \Omega.\] 
Here we implicitly identify $dt$ with its pullback by $\widehat f$. 

\begin{claim}
\label{int non zero}
There exists a holomorphic function $\tau $ on $\mathbb C$ such that the holomorphic function $\overline \tau=
\tau(\be(\widehat f)) $ on $\widehat M$ satisfies 
\[\int_L \overline \tau  \wom \neq 0.\]
\end{claim}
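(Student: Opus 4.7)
First, since $\wom$ is a holomorphic $(n,0)$-form on the $n$-dimensional complex manifold $\widehat M$, writing $\wom=A\,dz_1\wedge\cdots\wedge dz_{n-1}\wedge d\widehat f$ in local holomorphic coordinates near any smooth fiber of $\widehat f$ shows that $\wom$ restricts to zero on $\widehat M_{1-\ep}$; hence $\int_\Lambda\overline\tau\,\wom=0$ for every holomorphic $\tau$. On $L^\circ$ the map $\widehat f$ is a submersion over $(0,1)$, and decomposing $\wom=\wom_t\wedge d\widehat f$ followed by Fubini along the fibration gives
\[
\int_L\overline\tau\,\wom \;=\; \int_0^1 \tau(e^{2\pi i t})\,P(t)\,dt,\qquad P(t):=\int_{\Sigma^t}\wom_t.
\]
From here the plan is to force a Fourier coefficient of $P$ to be nonzero by exploiting the Picard--Lefschetz behaviour at the two nodes.

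\smallskip

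Next, in the normal form $\widehat f=z_1^2+\cdots+z_n^2$ near $y_0$, the rescaling $z\mapsto\sqrt t\,z$ identifies $L_t$ with $\sqrt t\,L_1\simeq S^{n-1}$ and the residue formula $\wom_t=\tfrac{dz_1\wedge\cdots\wedge dz_{n-1}}{2z_n}$ gives
\[
P(t)=c\,t^{(n-2)/2}\bigl(1+O(t)\bigr)\quad\text{as }t\to 0^+,
\]
where, up to normalizing $\Omega$, $c$ is a positive multiple of the hemispherical integral $\int_{|x'|<1}dx_1\cdots dx_{n-1}/\sqrt{1-|x'|^2}$. A symmetric computation at $y_1$ yields $P(t)=\delta c'(1-t)^{(n-2)/2}(1+O(1-t))$ as $t\to 1^-$, with $c'\ne 0$ and $\delta\in\{\pm 1\}$ the orientation sign produced in Claim~\ref{cycle}. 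On the open interval $(0,1)$, $\widehat f$ is a submersion so $P$ is smooth.

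\smallskip

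Assume toward a contradiction that $\int_L\overline\tau\,\wom=0$ for every entire $\tau$. Testing against $\tau(w)=w^k$, $k\in\N_{\ge 0}$, gives $I_k:=\int_0^1 e^{2\pi ikt}P(t)\,dt=0$ for all $k\ge 0$. Since the bulk of $P$ is smooth, its contribution to $I_k$ is $O(k^{-\infty})$, and the asymptotic of $I_k$ is governed by the Mellin identity $\int_0^\infty t^{\alpha}e^{2\pi ikt}\,dt=\Gamma(\alpha+1)e^{i\pi(\alpha+1)/2}(2\pi k)^{-\alpha-1}$ with $\alpha=(n-2)/2$; combined with the symmetric contribution at $t=1$ (using $e^{2\pi ik}=1$ for integer $k$), this yields
\[
I_k\;=\;\frac{\Gamma(n/2)}{(2\pi k)^{n/2}}\Bigl((c+\delta c')\cos\!\tfrac{\pi n}{4}+i(c-\delta c')\sin\!\tfrac{\pi n}{4}\Bigr)+O\bigl(k^{-n/2-1}\bigr).
\]
Since $n$ is odd, $\pi n/4$ is not a multiple of $\pi/2$, so $\cos(\pi n/4)$ and $\sin(\pi n/4)$ are both nonzero. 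Picking $(D_1,D_2)$ defined over $\R$ (possible inside the open locus of the theorem) and $\Omega$ conjugation-invariant makes $c,c'$ strictly positive reals; then for $\delta=+1$ the real part of the bracket is $(c+c')\cos(\pi n/4)\ne 0$, and for $\delta=-1$ the imaginary part is $(c+c')\sin(\pi n/4)\ne 0$. In either case $I_k\ne 0$ for all sufficiently large $k$, so $\tau(w)=w^k$ is the required entire function.

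\smallskip

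The step I expect to require the most care is the alignment of the phases of $c$ and $c'$, since without the reality reduction these are a priori complex constants and the two endpoint contributions to $I_k$ could in principle cancel. If the reality arrangement proves too restrictive, the fallback is to push the asymptotic expansion of $I_k$ further: the coefficient of $k^{-n/2-j}$ is a specific linear combination of the $j$-th Taylor coefficients of $t\mapsto P(t)t^{-(n-2)/2}$ at the two endpoints, and the infinite system $I_k\equiv 0$ would force all of these combinations to vanish simultaneously, contradicting the nonvanishing of the leading coefficient $c$.
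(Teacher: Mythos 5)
Your setup coincides with the paper's: the $\Lambda$-term is harmless (the paper carries it along, you observe it vanishes), everything reduces to the fibre integral $P(t)=\int_{\Sigma^t}\wom_t$ on $[0,1]$, and your local computation of the node contribution (exponent $(n-2)/2$, nonzero leading constant given by a hemispherical integral) is the sharper version of the local statement the paper invokes --- only the nonvanishing near $t=0$ is actually used there. The divergence, and the gap, is in how you convert ``$\int_L\overline\tau\wom=0$ for all $\tau$'' into a contradiction. Testing only against $\tau(w)=w^k$, $k\ge 0$, gives the one-sided moment conditions $I_k=\int_0^1e^{2\pi ikt}P(t)\,dt=0$, and these do \emph{not} force the conclusion you want: since $\be(0)=\be(1)$, the two endpoint expansions sit on the two sides of a single point of the circle, and there exist nonzero functions, smooth on the circle away from that point, with exactly such matched one-sided half-integer power singularities and nonzero leading constants, all of whose nonnegative Fourier coefficients vanish --- for instance the boundary value of $\overline{z\,(1-z)^{(n-2)/2}}$. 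So the vanishing of every coefficient of $k^{-n/2-j}$ in your expansion only imposes phase relations between $c$, $\delta c'$ and the higher Taylor data at the two ends; it does not contradict $c\neq 0$, and your fallback argument is therefore a non sequitur. Your primary fix is not available either: the claim must be proved for the pair $(D_1,D_2)$ occurring in Theorem~\ref{univ cover} (and after the dilation/M\"obius normalizations made in that proof), so you cannot ``pick'' real divisors and a conjugation-invariant $\Omega$ to make $c,c'$ positive reals, and you give no specialization argument transferring the nonvanishing from such a real member of the family to the general one.

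For contrast, the paper's proof sidesteps the phase-cancellation issue entirely: assuming the integral vanishes for all admissible twists, it treats the hypothesis as a moment problem for $h(t)=P(t)$ against a dense family of $1$-periodic test functions (two-sided frequencies, not just $k\ge 0$), concludes that $h$ vanishes identically away from $t=1-\ep$, and contradicts the nonvanishing local period at $t\to 0$. If you want to rescue your Fourier-asymptotics route you must either enlarge the test class so as to separate the two one-sided singularities (e.g.\ allow $\tau$ holomorphic on $\C^*$, so Laurent monomials give both signs of $k$), or prove directly that the relation $c\,e^{i\pi n/4}+\delta c'e^{-i\pi n/4}=0$ is impossible, for instance by using that $P$ is the restriction of a period of the transported vanishing cycle and hence has prescribed branch behaviour at the critical value; neither ingredient is present in your write-up. (A small additional point: smoothness of $P$ across $t=1-\ep$, which you need for the $O(k^{-\infty})$ bulk estimate, is not just because $\widehat f$ is a submersion there --- it uses that the transported cycle and $\pm L_t$ are homologous, i.e.\ Claim~\ref{cycle}, since the integral only depends on the fibrewise homology class.)
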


\begin{proof}[Proof of Claim~\ref{int non zero}]
Argue by contradiction and let $P$ be a polynomial of one variable. Set $h(t):=\int_{\Sigma^t} \wom_t$. Since $\Lambda \subset \widehat M_{1-\ep}$, we have 
\[\int_0^1 P(e^{2\pi i t})h(t)dt=-P(e^{2\pi i(1-\ep)})\int_{\Lambda} \widehat \Omega.\] 
Since this holds for any $P$, we infer that for any $1$-periodic continuous function $\lambda$ on $\mathbb R$, we have $\int_0^1\lambda(t)h(t)dt=-\lambda(1-\ep)\int_{\Lambda} \widehat \Omega$. Since $h$ is continuous away from $t=1-\ep$, this implies that $h$ vanishes identically away from $1-\ep$, which contradicts the fact that when $t\to 0$, we have $h(t)\sim \alpha t$ for some $\alpha \neq 0$  as one checks by a local computation (see, e.g. \cite[Section 4]{CollinsCIME}), given that $\wom$ is a non-vanishing holomorphic $n$-form. 
\end{proof}

From now on, we replace $\wom$ with $\overline \tau  \wom$ which now satisfies $\int_L\wom\neq 0$ and still is the pull back of a holomorphic $n$-form from $M$ (but possibly with zeros). For $k\in \mathbb Z \simeq \mathrm{Gal}(\pi)$, we denote by $L_k$ the translate of $L$ by $k$, which fibers over $[k,k+1]$ under the map $\widehat f$. 

\begin{claim}
\label{homology}
The homology classes $[L_k]\in H_n(\widehat M, \mathbb Z)$ for $k\in \mathbb Z$ are independent over $\mathbb Z$. In particular, $H_n(\widehat M, \mathbb Z)$ is not finitely generated. 
\end{claim}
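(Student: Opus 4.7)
The plan is to detect a hypothetical finite linear relation $\sum_{k=-N}^{N} a_k [L_k]=0$ in $H_n(\widehat M, \mathbb Z)$ by pairing the cycles against a sufficiently rich family of closed $n$-forms on $\widehat M$. Since $\widehat M$ has complex dimension $n$, every holomorphic $n$-form on it is automatically $d$-closed, so for each polynomial $h\in \mathbb C[z]$ the form $\eta_h:=h(\widehat f)\,\wom$ furnishes a legitimate test form, and the hypothesized relation yields $\sum_k a_k\int_{L_k}\eta_h=0$ for every such $h$.

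I would then exploit the Galois symmetry. By construction of $\widehat f$, the generator $T_k$ of $\mathrm{Gal}(\pi)\simeq \mathbb Z$ satisfies $\widehat f\circ T_k=\widehat f+k$, while $\wom=\pi^*\Omega$ is $T_k$-invariant; hence $\int_{L_k}\eta_h=\int_{T_k(L)}\eta_h=\int_L h(\widehat f+k)\,\wom$. Using the projection $\widehat f:L\to [0,1]$ to integrate along the fibers of $L^\circ$—and noting that the contribution from $\Lambda\subset\widehat M_{1-\varepsilon}$ vanishes automatically, since the pullback of the $(n,0)$-form $\wom$ to the smooth complex submanifold $\widehat M_{1-\varepsilon}$ of complex dimension $n-1$ is zero—this rewrites as
\[
\int_L h(\widehat f+k)\,\wom=\int_0^1 h(t+k)\,\Psi(t)\,dt,
\]
where $\Psi(t)\,dt$ denotes the fiberwise pushforward of $\wom|_{L^\circ}$ by $\widehat f$, an integrable function on $(0,1)$.

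Performing the substitution $s=t+k$ in each term and regrouping, the hypothesized relation takes the form
\[
\int_{-N}^{N+1} h(s)\,\Phi(s)\,ds=0\quad\text{for every } h\in \mathbb C[z],
\]
where $\Phi\in L^1(\mathbb R)$ is compactly supported and defined piecewise by $\Phi(s):=a_k\Psi(s-k)$ for $s\in(k,k+1)$. Taking $h(z)=z^j$ makes all moments of $\Phi$ vanish, so by Weierstrass approximation $\Phi\equiv 0$ almost everywhere; this forces $a_k\Psi\equiv 0$ on $(0,1)$ for each $k$. The crux—and the entire reason $\wom$ was rescaled in the preceding Claim~\ref{int non zero}—is the non-vanishing $\int_0^1\Psi(t)\,dt=\int_L\wom\neq 0$, which rules out $\Psi\equiv 0$ and therefore forces $a_k=0$ for every $k$. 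Since no nontrivial finite relation exists among the classes $[L_k]$, $H_n(\widehat M,\mathbb Z)$ cannot be finitely generated.
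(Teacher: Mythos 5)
Your argument is correct, and it rests on the same two pillars as the paper's proof --- Galois invariance of the (rescaled) form $\wom$ together with the nonvanishing $\int_L\wom\neq 0$ from Claim~\ref{int non zero} --- but the analytic endgame is genuinely different. The paper tests the hypothetical relation against the one-parameter family $e^{s\widehat f}\,\wom$, which diagonalizes the deck action: the relation factors as $\bigl(\sum_k a_k e^{sk}\bigr)\int_L e^{s\widehat f}\,\wom=0$, and since the second factor is nonzero for $s$ near $0$ (it equals $\int_L\wom$ at $s=0$), the exponential sum vanishes identically and all $a_k=0$; no decomposition of $L$ is needed at all. You instead use polynomial multipliers $h(\widehat f)$, push the integrals forward along $\widehat f|_{L^\circ}$ to a compactly supported $L^1$ density $\Phi$ on the real line, and kill $\Phi$ by the vanishing of all its moments (Weierstrass). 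This requires the fiber-integration identity, which the paper itself uses in the proof of Claim~\ref{int non zero}, plus your observation that the correction chain $\Lambda$ contributes nothing because the $(n,0)$-form $\wom$ pulls back to zero on the complex $(n-1)$-dimensional fiber $\widehat M_{1-\ep}$; that observation is correct (and it incidentally shows $\int_0^1\Psi(t)\,dt=\int_L\wom$ on the nose, slightly sharpening the bookkeeping in the paper's formula where the term $\int_\Lambda\wom$ is kept). In short: the paper's exponential trick buys a shorter argument with no pushforward analysis, while your moment argument is more hands-on but equally valid and makes the role of the fibration over $[0,1]$ explicit.
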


\begin{proof}[Proof of Claim~\ref{homology}]
Assume by contradiction that there are non-zero integers $(a_k)_{k\in K}$ with $K\subset \mathbb Z$ finite such that
\[\sum_{k\in K} a_k [L_k]=0 \in H_n(\widehat M, \mathbb Z).\]
  For $s\in \mathbb C$, consider the (closed) holomorphic top form $e^{s\widehat f} \wom$ on $\widehat M$. Since $\wom$ is invariant under the Galois group, we have 
  \[\int_{L_k}e^{s\widehat f} \wom= e^{sk}\int_L\e^{s\widehat f}\wom \]
  hence integrating the relation between the $L_k's$ yields
  \[\Big(\sum_{k\in K}a_k e^{sk}\Big) \cdot \int_L e^{s\widehat f}\wom=0 \]
  for any $s\in \mathbb C$. Since $\int_L\wom \neq 0$, this implies that the holomorphic function $s\mapsto \sum_K a_k e^{sk}$ vanishes in the neighborhood of $0$, hence identically. Clearly, this forces $a_k=0$ for any $k\in K$. 
\end{proof}
Claim~\eqref{homology} yields that $H_{n}(\widehat M, \mathbb Z)$ has infinite rank. It remains to see that $\widetilde M$ shares the same property. This is a consequence of the universal coefficient theorem combined with the fact that the finite degree $d$ cover $p:\widetilde M\to \widehat M$ induces an surjection $H_n(\widetilde M, \mathbb R)\to H_n(\widehat M, \mathbb R)$. To check the latter assertion, first recall that $H_n(\widehat M, \mathbb R)^\vee \simeq H^n_{\rm dR, c}(\widehat M, \mathbb R)$ by Poincaré duality (and similarly for $\widetilde M$) hence it is enough to show that the pull back map $p^*:H^n_{\rm dR, c}(\widehat M, \mathbb R)\to H^n_{\rm dR, c}(\widetilde M, \mathbb R)$ is injective. Let $[\alpha] \in H^n_{\rm dR,c}(\widehat M, \mathbb R)$ be a nonzero class. By Poincaré duality again, there exists $[\beta]\in H^n_{\rm dR}(\widehat M, \mathbb R)$ such that $\int_{\widehat M} \alpha \wedge \beta \neq 0$. In particular, $\int_{\widetilde M} p^*\alpha \wedge p^*\beta = \mathrm{deg}(p) \int_{\widehat M} \alpha \wedge \beta \neq 0$ hence $p^*[\alpha]\neq 0 \in H^n_{\rm dR, c}(\widetilde M, \mathbb R)$. \\

Finally, let us explain why $\widetilde M$ is not biholomorphic to a Zariski open set of a compact complex space (or equivalently, a of a compact complex manifold thanks to Hironaka's desingularization theorem). This follows from the well-known fact that if $N$ is a Zariski open subset of a compact complex manifold $N$, then $N$ is homotopy equivalent to a finite CW complex, hence it has finitely generated (co)homology groups. One way to see it is by finding a real analytic embedding of $\overline N$ in $\mathbb C^q$ for some $q\in \mathbb N$ by Grauert's theorem \cite{Grauert58} and then use the pair version (i.e. (a semi-algebraic set, a closed subset)) of {\L}ojasiewicz's theorem on the triangulation of semi-algebraic subsets of $\mathbb R^k$, cf \cite{HironakaArcata} and references therein.  
\end{proof}

\begin{rem}[An alternative argument]
\label{rem kollar}
In the case where $n\equiv -1 [4]$ and $d_1=d_2=\frac{n+1}{2}$, one can give an alternative proof of Theorem~\ref{univ cover} as it was explained to us by J\'anos Koll\'ar. It goes as follows. First, up to perturbing the hypersurfaces, one can assume that $D_2=\overline D_1$ and that $D_1=(\sum_{i=0}^n a_i z_i^{\frac{n+1}{2}}=0)$ with $\mathrm{Re}(a_i)>0$ for all $i$. In particular, $M$ and $D$ are defined over $\mathbb R$, $D(\mathbb R)=\emptyset$ and $M(\mathbb R)\simeq \mathbb P^n(\mathbb R)$. Since $\pi_1( \mathbb P^n(\mathbb R))=\mathbb Z/2\mathbb Z$, $\Sigma:=\pi^{-1}(M(\mathbb R))\subset \widehat M$ is an infinite disjoint union of real compact manifolds of dimension $n$ either diffeomorphic to $\mathbb P^n(\mathbb R)$ or $S^n$. Moreover, $\Sigma$ arises as the fixed points of the lift of the complex conjugation to $\widehat M$. Smith's theory then implies that $H_{\bullet}(\widehat M, \mathbb Z/2\mathbb Z)$ is infinite dimensional, cf e.g. Theorem A in the lecture notes \cite{Putman}. In particular, the same holds for $\widetilde M$. 
\end{rem}

\begin{rem}
There is a unifying theme behind the arguments in the proof of Theorem~\ref{univ cover}, and the argument of Koll\'ar, recounted in Remark~\ref{rem kollar}.  Recall from the work of Harvey-Lawson \cite{HL82} that in a Calabi-Yau manifold $(X,g_{CY}, \Omega)$  a special Lagrangian submanifold $L\subset X$ is a minimal submanifold satisfying ${\rm Re}(\Omega)|_{L} = dvol_{L}$.  In particular, the homology class of a special Lagrangian is non-trivial.  In the proof of Theorem~\ref{univ cover} we construct infinitely many homologically independent cycles in the universal cover by parallel transporting the vanishing cycles of the Lefschetz fibration.  This is a commonly used technique for constructing special Lagrangians in collapsing families of $K3$ fibered Calabi-Yau $3$-folds; see \cite{ChiuLin} and the references therein.  On the other hand, the argument of Koll\'ar produces infinitely many homologically non-trivial cycles as the fixed points of a complex conjugation action.  By the same token, the fixed points of such an action are typically special Lagrangian (see e.g. \cite{Bryant99}).  Technically speaking, this conclusion depends on a suitable uniqueness statement for the Calabi-Yau metric (as in Yau's Theorem \cite{Yau78}), and such statements are not often available in the non-compact setting.
\end{rem}

\bibliographystyle{smfalpha}
\bibliography{biblio}

\providecommand{\bysame}{\leavevmode ---\ }
\providecommand{\og}{``}
\providecommand{\fg}{''}
\providecommand{\smfandname}{\&}
\providecommand{\smfedsname}{\'eds.}
\providecommand{\smfedname}{\'ed.}
\providecommand{\smfmastersthesisname}{M\'emoire}
\providecommand{\smfphdthesisname}{Th\`ese}
\begin{thebibliography}{CGGN22}

\bibitem[AKL89]{AKL}
{\scshape M.~T. Anderson, P.~B. Kronheimer {\normalfont \smfandname} C.~LeBrun}
  -- {\og Complete {Ricci}-flat {K{\"a}hler} manifolds of infinite topological
  type\fg}, \emph{Commun. Math. Phys.} \textbf{125} (1989), no.~4, p.~637--642
  (English).

\bibitem[Amb05]{Amb05}
{\scshape F.~Ambro} -- {\og The moduli {$b$}-divisor of an lc-trivial
  fibration\fg}, \emph{Compos. Math.} \textbf{141} (2005), no.~2, p.~385--403.

\bibitem[Bea83]{Bea83}
{\scshape A.~Beauville} -- {\og Vari\'{e}t\'{e}s {K}\"{a}hleriennes dont la
  premi\`{e}re classe de {C}hern est nulle\fg}, \emph{J. Differential Geom.}
  \textbf{18} (1983), no.~4, p.~755--782 (1984).

\bibitem[Bes87]{Besse}
{\scshape A.~L. Besse} -- \emph{Einstein manifolds}, Ergebnisse der Mathematik
  und ihrer Grenzgebiete (3) [Results in Mathematics and Related Areas (3)],
  vol.~10, Springer-Verlag, Berlin, 1987,
  \href{http://dx.doi.org/10.1007/978-3-540-74311-8}{DOI:10.1007/978-3-540-74311-8}.

\bibitem[BFPT24]{BFPT}
{\scshape F.~Bernasconi, S.~Filipazzi, Z.~Patakfalvi {\normalfont \smfandname}
  N.~Tsakanikas} -- {\og A strong counterexample to the log canonical
  {Beauville}--{Bogomolov} decomposition (with an appendix by {N.
  Müller})\fg}, Preprint
  \href{http://arxiv.org/abs/2407.17260}{arXiv:2407.17260}, 2024.

\bibitem[BGL22]{BGL}
{\scshape B.~Bakker, H.~Guenancia {\normalfont \smfandname} C.~Lehn} -- {\og
  Algebraic approximation and the decomposition theorem for {K{\"a}hler}
  {Calabi}-{Yau} varieties\fg}, \emph{Invent. Math.} \textbf{228} (2022),
  no.~3, p.~1255--1308 (English).

\bibitem[Bre13]{Brendle}
{\scshape S.~Brendle} -- {\og {Ricci flat K{\"a}hler metrics with edge
  singularities }\fg}, \emph{International Mathematics Research Notices}
  \textbf{24} (2013), p.~5727--5766.

\bibitem[Bry99]{Bryant99}
{\scshape R.~L. Bryant} -- {\og Some examples of special {L}agrangian tori\fg},
  \emph{Adv. Theor. Math. Phys.} \textbf{3} (1999), no.~1, p.~83--90.

\bibitem[CGGN22]{CGGN}
{\scshape B.~Claudon, P.~Graf, H.~Guenancia {\normalfont \smfandname}
  P.~Naumann} -- {\og K\"{a}hler spaces with zero first {C}hern class:
  {B}ochner principle, {A}lbanese map and fundamental groups\fg}, \emph{J.
  Reine Angew. Math.} \textbf{786} (2022), p.~245--275.

\bibitem[CGP13]{CGP}
{\scshape F.~Campana, H.~Guenancia {\normalfont \smfandname} M.~P\u{a}un} --
  {\og Metrics with cone singularities along normal crossing divisors and
  holomorphic tensor fields\fg}, \emph{Ann. Sci. \'{E}c. Norm. Sup\'{e}r. (4)}
  \textbf{46} (2013), no.~6, p.~879--916.

\bibitem[CL24a]{ChiuLin}
{\scshape S.-K. Chiu {\normalfont \smfandname} Y.-S. Lin} -- {\og Special
  lagrangian submanifolds in {K3}-fibered {Calabi-Yau} 3-folds\fg}, Preprint
  \href{https://arxiv.org/abs/2410.17662}{2410.17662}, 2024.

\bibitem[CL24b]{CL}
{\scshape T.~C. Collins {\normalfont \smfandname} Y.~Li} -- {\og Complete
  {Calabi}-{Yau} metrics in the complement of two divisors\fg}, \emph{Duke
  Math. J.} \textbf{173} (2024), no.~18, p.~3559--3604 (English).

\bibitem[Col25]{CollinsCIME}
{\scshape T.~C. Collins} -- {\og An introduction to conifold transitions\fg},
  Preprint \href{https://arxiv.org/abs/2509.01002}{arXiv:2509.01002}, 2025.

\bibitem[CTY24]{CTY}
{\scshape T.~C. Collins, F.~Tong {\normalfont \smfandname} S.-T. Yau} -- {\og A
  free boundary {Monge-Amp\`ere} equation and applications to complete
  {Calabi-Yau} metrics\fg}, Preprint
  \href{https://arxiv.org/abs/2402.10111}{arXiv:2402.10111}, 2024.

\bibitem[Dru18]{Dru16}
{\scshape S.~Druel} -- {\og A decomposition theorem for singular spaces with
  trivial canonical class of dimension at most five\fg}, \emph{Invent. Math.}
  \textbf{211} (2018), no.~1, p.~245--296.

\bibitem[EGZ09]{EGZ}
{\scshape P.~Eyssidieux, V.~Guedj {\normalfont \smfandname} A.~Zeriahi} -- {\og
  {Singular K{\"a}hler-Einstein metrics}\fg}, \emph{{J. Amer. Math. Soc.}}
  \textbf{22} (2009), p.~607--639.

\bibitem[FM22]{FS22}
{\scshape D.~Faenzi {\normalfont \smfandname} S.~Marchesi} -- {\og On stability
  of logarithmic tangent sheaves: symmetric and generic determinants\fg},
  \emph{Int. Math. Res. Not.} \textbf{2022} (2022), no.~23, p.~18589--18631.

\bibitem[Fuj24]{FujinoQA}
{\scshape O.~Fujino} -- {\og On quasi-{Albanese} maps\fg}, Preprint
  \href{https://arxiv.org/abs/2402.04595}{arXiv:2402.04595}, 2024.

\bibitem[GGK19]{GGK}
{\scshape D.~Greb, H.~Guenancia {\normalfont \smfandname} S.~Kebekus} -- {\og
  Klt varieties with trivial canonical class: holonomy, differential forms, and
  fundamental groups\fg}, \emph{Geom. Topol.} \textbf{23} (2019),
  p.~2051--2124.

\bibitem[GKP16]{GKP}
{\scshape D.~Greb, S.~Kebekus {\normalfont \smfandname} T.~Peternell} -- {\og
  Singular spaces with trivial canonical class\fg}, in \emph{Minimal Models and
  Extremal Rays, Kyoto, 2011}, Adv.~Stud.~Pure Math., vol.~70, Mathematical
  Society of Japan, Tokyo, 2016, p.~67--113.

\bibitem[Got94]{Goto94}
{\scshape R.~Goto} -- {\og On hyper-{K}\"ahler manifolds of type
  {$A_\infty$}\fg}, \emph{Geom. Funct. Anal.} \textbf{4} (1994), no.~4,
  p.~424--454.

\bibitem[Got98]{Goto98}
\bysame , {\og On hyper-{K}\"ahler manifolds of type {$A_\infty$} and
  {$D_\infty$}\fg}, \emph{Comm. Math. Phys.} \textbf{198} (1998), no.~2,
  p.~469--491.

\bibitem[GP16]{GP}
{\scshape H.~Guenancia {\normalfont \smfandname} M.~P{\u{a}}un} -- {\og {Conic
  singularities metrics with prescribed Ricci curvature: the case of general
  cone angles along normal crossing divisors}\fg}, \emph{J. Differential Geom.}
  \textbf{103} (2016), no.~1, p.~15--57.

\bibitem[Gra58]{Grauert58}
{\scshape H.~Grauert} -- {\og On {Levi}'s problem and the imbedding of
  real-analytic manifolds\fg}, \emph{Ann. Math. (2)} \textbf{68} (1958),
  p.~460--472 (English).

\bibitem[GT22]{GT16}
{\scshape H.~Guenancia {\normalfont \smfandname} B.~Taji} -- {\og Orbifold
  stability and {Miyaoka}-{Yau} inequality for minimal pairs\fg}, \emph{Geom.
  Topol.} \textbf{26} (2022), no.~4, p.~1435--1482 (English).

\bibitem[Gue16]{GSS}
{\scshape H.~Guenancia} -- {\og {Semistability of the tangent sheaf of singular
  varieties}\fg}, \emph{Algebraic Geometry} \textbf{3} (2016), no.~5,
  p.~508--542.

\bibitem[Hat11]{Hat11}
{\scshape K.~Hattori} -- {\og The volume growth of hyper-{K}\"ahler manifolds
  of type {$A_\infty$}\fg}, \emph{J. Geom. Anal.} \textbf{21} (2011), no.~4,
  p.~920--949.

\bibitem[Hir75]{HironakaArcata}
{\scshape H.~Hironaka} -- {\og Triangulations of algebraic sets\fg}, Algebraic
  {Geom}., {Proc}. {Symp}. {Pure} {Math}. 29, {Arcata} 1974, 165-185 (1975).,
  1975.

\bibitem[HL82]{HL82}
{\scshape R.~Harvey {\normalfont \smfandname} H.~B. Lawson, Jr.} -- {\og
  Calibrated geometries\fg}, \emph{Acta Math.} \textbf{148} (1982), p.~47--157.

\bibitem[HP19]{HP}
{\scshape A.~H\"{o}ring {\normalfont \smfandname} T.~Peternell} -- {\og
  Algebraic integrability of foliations with numerically trivial canonical
  bundle\fg}, \emph{Invent. Math.} \textbf{216} (2019), no.~2, p.~395--419.

\bibitem[HSVZ22]{HSVZ}
{\scshape H.-J. {Hein}, S.~{Sun}, J.~{Viaclovsky} {\normalfont \smfandname}
  R.~{Zhang}} -- {\og {Nilpotent structures and collapsing Ricci-flat metrics
  on the K3 surface}\fg}, \emph{{J. Am. Math. Soc.}} \textbf{35} (2022), no.~1,
  p.~123--209 (English).

\bibitem[Huy23]{HuyCubic}
{\scshape D.~Huybrechts} -- \emph{The geometry of cubic hypersurfaces},
  Cambridge Studies in Advanced Mathematics, vol. 206, Cambridge University
  Press, Cambridge, 2023.

\bibitem[JMR16]{JMR}
{\scshape T.~Jeffres, R.~Mazzeo {\normalfont \smfandname} Y.~A. Rubinstein} --
  {\og K\"ahler-{E}instein metrics with edge singularities\fg}, \emph{Ann. of
  Math. (2)} \textbf{183} (2016), no.~1, p.~95--176, with an Appendix by C. Li
  and Y. Rubinstein.

\bibitem[Kaw81]{Kawa81}
{\scshape Y.~Kawamata} -- {\og Characterization of abelian varieties\fg},
  \emph{Compositio Math.} \textbf{43} (1981), no.~2, p.~253--276.

\bibitem[Lib94]{Lib94}
{\scshape A.~Libgober} -- {\og Homotopy groups of the complements to singular
  hypersurfaces. {II}\fg}, \emph{Ann. Math. (2)} \textbf{139} (1994), no.~1,
  p.~117--144 (English).

\bibitem[Nor83]{Nori}
{\scshape M.~V. Nori} -- {\og Zariski's conjecture and related problems\fg},
  \emph{Ann. Sci. \'Ecole Norm. Sup. (4)} \textbf{16} (1983), no.~2,
  p.~305--344.

\bibitem[P{\u{a}}u08]{Paun}
{\scshape M.~P{\u{a}}un} -- {\og {Regularity properties of the degenerate
  Monge-Amp{\`e}re equations on compact K{\"a}hler manifolds.}\fg}, \emph{Chin.
  Ann. Math., Ser. B} \textbf{29} (2008), no.~6, p.~623--630.

\bibitem[Per24]{Per}
{\scshape J.~V. Pereira} -- {\og Closed meromorphic 1-forms\fg}, in
  \emph{Handbook of geometry and topology of singularities V: foliations},
  Cham: Springer, 2024, p.~447--499 (English).

\bibitem[Pet94]{Pe94}
{\scshape T.~Peternell} -- {\og Minimal varieties with trivial canonical
  classes. {I}\fg}, \emph{Math. Z.} \textbf{217} (1994), no.~3, p.~377--405
  (English).

\bibitem[Put]{Putman}
{\scshape A.~Putman} -- {\og {Smith theory and Bredon homology}\fg}, available
  on the
  \href{https://academicweb.nd.edu/~andyp/notes/SmithTheory.pdf}{author's
  website} at the address
  https://academicweb.nd.edu/~andyp/notes/SmithTheory.pdf.

\bibitem[Sch19]{Schreieder}
{\scshape S.~Schreieder} -- {\og Stably irrational hypersurfaces of small
  slopes\fg}, \emph{J. Amer. Math. Soc.} \textbf{32} (2019), no.~4,
  p.~1171--1199.

\bibitem[Shi95]{Shimada}
{\scshape I.~Shimada} -- {\og On the fundamental group of the complement of a
  divisor in a homogeneous space\fg}, \emph{Math. Z.} \textbf{220} (1995),
  no.~3, p.~445--448 (English).

\bibitem[Shi22]{Shinder}
{\scshape E.~Shinder} -- {\og Variation of stable birational types of
  hypersurfaces\fg}, in \emph{Recent developments in algebraic geometry. To
  Miles Reid for his 70th birthday}, Cambridge: Cambridge University Press,
  2022, p.~296--313 (English).

\bibitem[SW16]{SW}
{\scshape J.~Song {\normalfont \smfandname} X.~Wang} -- {\og The greatest
  {R}icci lower bound, conical {E}instein metrics and {C}hern number
  inequality\fg}, \emph{Geom. Topol.} \textbf{20} (2016), no.~1, p.~49--102.

\bibitem[TY90]{TY1}
{\scshape G.~Tian {\normalfont \smfandname} S.-T. Yau} -- {\og Complete
  {K}{\"a}hler manifolds with zero {R}icci curvature. {I}\fg}, \emph{J. Amer.
  Math. Soc.} \textbf{3} (1990), no.~3, p.~579--609.

\bibitem[TY91]{TY2}
\bysame , {\og Complete {K}\"{a}hler manifolds with zero {R}icci curvature.
  {II}\fg}, \emph{Invent. Math.} \textbf{106} (1991), no.~1, p.~27--60.

\bibitem[Yau78]{Yau78}
{\scshape S.-T. Yau} -- {\og On the {R}icci curvature of a compact {K}{\"a}hler
  manifold and the complex {M}onge-{A}mp{\`e}re equation. {I}\fg}, \emph{Comm.
  Pure Appl. Math.} \textbf{31} (1978), no.~3, p.~339--411.

\bibitem[Yau82]{YauProb}
\bysame , {\og Problem section of the seminar in differential geometry at
  {Tokyo}\fg}, Semin. differential geometry, {Ann}. {Math}. {Stud}. 102,
  669-706 (1982)., 1982.

\bibitem[Yau87]{YauNonlinear}
\bysame , {\og Nonlinear analysis in geometry\fg}, \emph{Enseign. Math. (2)}
  \textbf{33} (1987), p.~109--158 (English).

\bibitem[Zha24]{JZhang24}
{\scshape J.~Zhang} -- {\og Hermitian-{Yang}-{Mills} connections on some
  complete non-compact {K{\"a}hler} manifolds\fg}, \emph{Math. Ann.}
  \textbf{390} (2024), no.~3, p.~4535--4575 (English).

\end{thebibliography}

\end{document}